\DeclareMathOperator{\var}{var}
\newcommand{\ind}{\mathds}
\newcommand{\Z}{\ensuremath{\mathbb{Z}}}
\newcommand{\N}{\ensuremath{\mathbb{N}}}
\newcommand{\R}{\ensuremath{\mathbb{R}}}
\newcommand{\E}{\ensuremath{\mathbb{E}}}
\renewcommand{\P}{\ensuremath{\mathbb{P}}}
\newcommand{\asto}[1]{\underset{{#1}\to\infty}{\longrightarrow}}
\newtheorem{theorem}{Theorem}[section]
\newtheorem{lemma}[theorem]{Lemma}
\newtheorem{corollary}[theorem]{Corollary}
\newtheorem{proposition}[theorem]{Proposition}
\newtheorem{remark}[theorem]{Remark}
\newtheorem{claim}[theorem]{Claim}
\numberwithin{equation}{section}
\definecolor{Red}{rgb}{1,0,0}
\definecolor{Blue}{rgb}{0,0,1}
\definecolor{Olive}{rgb}{0.41,0.55,0.13}
\definecolor{Yarok}{rgb}{0,0.5,0}
\definecolor{Green}{rgb}{0,1,0}
\definecolor{MGreen}{rgb}{0,0.8,0}
\definecolor{DGreen}{rgb}{0,0.55,0}
\definecolor{Yellow}{rgb}{1,1,0}
\definecolor{Cyan}{rgb}{0,1,1}
\definecolor{Magenta}{rgb}{1,0,1}
\definecolor{Orange}{rgb}{1,.5,0}
\definecolor{Violet}{rgb}{.5,0,.5}
\definecolor{Purple}{rgb}{.75,0,.25}
\definecolor{Brown}{rgb}{.75,.5,.25}
\definecolor{Grey}{rgb}{.7,.7,.7}
\definecolor{Black}{rgb}{0,0,0}
\newcommand{\ignore}[1]{{}}
\renewcommand{\d}{\text{\normalfont\sffamily d}}
\date{\today}
\begin{document}
\baselineskip=14pt

\title{An Invariance Principle for a Random Walk Among
Moving Traps
via Thermodynamic Formalism
}
\author{
Siva Athreya%
  \thanks{International Centre for Theoretical Sciences-TIFR, 560083 Bengaluru, India
and Indian Statistical Institute Bangalore Centre, 560059 Bengaluru, India.
    Email: \url{athreya@icts.res.in}}
  \and
  Alexander Drewitz%
  \thanks{Universit\"at zu K\"oln,
Department Mathematik/Informatik,
Weyertal 86--90,
50931 K\"oln, Germany.
    Email: \url{adrewitz@uni-koeln.de}}\, \thanks{NYU Shanghai, NYU-ECNU Institute of Mathematical Sciences,  China.}
  \and
 Rongfeng Sun%
  \thanks{Department of Mathematics, National University of Singapore,
S17, 10 Lower Kent Ridge Road
Singapore, 119076.
    Email: \url{matsr@nus.edu.sg}}
}

\maketitle

\begin{abstract}
    We consider a random walk among a Poisson cloud of moving traps on $\Z^d$, where the walk is killed at a rate proportional to the number of traps occupying the same position. In dimension $d=1$, we have previously shown that under the annealed law of the random walk conditioned on survival up to time $t$, the walk is sub-diffusive. Here we show that in $d\geq 6$ and under diffusive scaling, this annealed law satisfies an invariance principle with a positive diffusion constant if the killing rate is small. Our proof is based on the theory of thermodynamic formalism, where we extend some classic results for Markov shifts with a finite alphabet and a potential of summable variation to the case of an uncountable non-compact alphabet.
\end{abstract}

\noindent {\em AMS 2020 Subject Classification:} 60K37, 60K35, 82C22, 37D35.\\
\noindent {\em Keywords:} invariance principle, parabolic Anderson model, random walk in random potential, trapping dynamics, thermodynamic formalism, topological Markov shift.
\section{Introduction} \label{sec:Intro}

The trapping problem, where particles diffuse in space with randomly located traps, has been studied extensively in the statistical physics and mathematics literature.  There are several different models that have been proposed to understand such trapping phenomena, and we refer the reader to the review article \cite{HoWe-94}, which explains in detail the background for the trapping problem and some early results.

When the particle motion is modelled via Brownian motion or random walks in Euclidean space, and immobile traps are Poisson distributed in space, there is substantial literature for the trapping problem: beginning with the seminal works of Donsker and Varadhan \cite{DoVa-75,DoVa-79}. Subsequent developments in the field can be found in  monographs \cite{Sz-98} and \cite{Ko-16}, as well as the sources therein, cf.\ also the survey article \cite{AtDrSu-17}. More recent results about detailed properties of the random walk amongst random obstacles can be found in \cite{DX20, DX19, DFSX20,DFSX21} and \cite{sz23}. In case the traps are mobile, however, much less is known. In
\cite{DrGaRaSu-10} (see also \cite{PSSS13}), the long-time asymptotics of the annealed and quenched survival probabilities were identified in all dimensions, extending earlier work in the physics literature~\cite{MOBC03, MOBC04}. In \cite{AtDrSu-17, Oz-19}  the path behavior of the one-dimensional random walk conditioned on survival up to time $t$ in the annealed setting, was investigated. The model of random walk among mobile traps can serve as a toy model for many physical and biological phenomena, such as foraging predators vs prey~\cite{MBOV13}.

In this article we focus on the specific setting of a single particle diffusing on $\Z^d$ according to a random walk, with randomly located traps that also move as random walks. For simplicity, we assume both the particle and the traps to evolve as simple symmetric random walks. At any given site and point in time, the particle is then killed at a rate proportional to the number of traps at that site.

\subsection{Model and Result}
We consider the model as described in \cite{DrGaRaSu-10}, where for an
intensity parameter $\alpha > 0$, by $(N_y)_{y \in \Z^d}$ one denotes a family of i.i.d.\ Poisson random variables with mean $\alpha$ each. Given
$(N_y)_{y\in\Z^d}$, we then start a family of independent simple
symmetric random walks $(Y^{j,y})_{y \in \Z^d, \; 1 \le j \le N_y}$ on
$\Z^d$, each with jump rate $\rho \ge 0$, with
$Y^{j,y}:=(Y^{j,y}_t)_{t\geq 0}$ representing the path of the $j$-th
trap starting from $y$ at time $0$. These will be referred to as
\lq$Y$-particles\rq\ or \lq traps\rq. For $t \ge 0$ and $x \in \Z^d,$
we denote by
\begin{equation}
\xi(t,x) := \sum_{y \in \Z^d, \; 1 \le j \le N_y} \delta_x (Y^{j,y}_t) \label{xidef}
\end{equation}
the number of traps at site $x$ at time $t.$

We now let $X:=(X_t)_{t\geq 0}$ denote a simple symmetric random walk on
$\Z^d$ with jump rate $\kappa \geq 0$ that evolves independently of
the $Y$-particles.  At each time $t$, the $X$-particle is killed with
rate $\gamma \xi(t,X_t),$ where $\gamma \ge 0$ is the interaction
parameter---i.e., the killing rate is proportional to the number of
traps that the $X$-particle sees at that time instant. We denote the
probability measure underlying the $X$- and $Y$-particles by $\P$, and
if we consider expectations or probabilities with respect to only a
subset of the defined random variables, we provide those as a
superscript; for the sake of clarity we also sometimes specify the starting configuration as
a subscript, such as in $\P_0^X.$

In order to set up notation, we introduce
the quenched survival
probability for a given realization of $\xi,$ i.e.,
\begin{equation} \label{eq:condAnnProb}
Z^{\gamma}_{t,\xi} := \E_0^X \Big[ \exp \Big\{ - \gamma \int_0^t \xi(s,X_s) \, {\rm d}s \Big\}
\Big].
\end{equation}
Alternatively, it will also be
useful to condition on $X$ and integrate out $\xi$ to obtain
\begin{equation} \label{eq:condAnnProb2}
Z^{\gamma}_{t,X} := \E^\xi \Big[ \exp \Big\{ - \gamma \int_0^t \xi(s,X_s) \, {\rm d}s \Big\}
\Big].
\end{equation}
Integrating out $X$ then yields the \lq annealed  survival probability\rq
\begin{equation} \label{eq:Ztg}
Z_{t}^\gamma:= \E^X_0 \Big[  \E^\xi\Big[ \exp \Big \{ -\gamma \int_0^t \xi(s,X_s) \, {\rm d}s \Big\} \Big]\Big] = \E_0^X [Z^\gamma_{t,X}]= \E^\xi [Z^\gamma_{t,\xi}].
\end{equation}
In \cite[Thm. 1.1]{DrGaRaSu-10}, the following asymptotics for the annealed survival probability has been established: For $\gamma\in (0,\infty]$, $\kappa\geq 0$, $\rho>0$ and $\alpha>0$ we have
\begin{align}\label{eq:annealedAsymptotics}
\begin{split}
Z_{t}^{\gamma} = \left\{
 \begin{array}{ll}
\exp\Big\{-\alpha \sqrt{\frac{8\rho t}{\pi}}(1+o(1))\Big\}, &  d=1, \\
&\\
\exp\Big\{-\alpha\pi\rho \frac{t}{\ln t}(1+o(1))\Big\}, & d=2,  \\
&\\
\exp\Big\{-\lambda_{d,\gamma,\kappa,\rho, \alpha}\, t(1+o(1))\Big\}, & d\geq 3,
\end{array}
\right.
\end{split}
\end{align}
where $\lambda_{d,\gamma,\kappa,\rho, \alpha}$ depends on $d$, $\gamma$, $\kappa$, $\rho$, $\alpha$, and is called the annealed Lyapunov exponent. %Furthermore, $\lambda_{d,\gamma,\kappa,\rho, \alpha}\geq \lambda_{d,\gamma,0,\rho, \alpha}=\alpha\gamma/(1+\frac{\gamma G_d(0)}{\rho})$, where $G_d(0):=\int_0^\infty p_t(0)\, {\rm d}t$ is the Green function of a simple symmetric random walk on $\Z^d$ with jump rate $1$ and transition kernel $p_t(\cdot)$.

A naturally ensuing problem then is to investigate how the asymptotics in
\eqref{eq:annealedAsymptotics} are actually achieved, both in terms of
the behavior of $X$ and $\xi.$ For this purpose, we consider the law of $X$ conditioned on survival up to time $t$ under the annealed law, i.e.\ the
family of Gibbs measures
\begin{equation} \label{eq:Gibbs}
P_{t}^\gamma ( X\in \cdot ) := \frac{\E_0^X \Big[ \E^\xi  \Big[ \exp \Big \{ -\gamma \int_0^t \xi(s,X_s) \, {\rm d}s \Big\} \Big] \ind{1}_{X \in \cdot} \Big] }
{  \E^X_0[Z^\gamma_{t,X}]}, \quad t \ge 0,
\end{equation}
on the Skorohod space $D([0,t],\Z^d)$ of c\`adl\`ag paths taking values in {${\mathbb Z}^d$}, starting from the origin. Note here that the above family of probability measures is in general non-consistent.
In previous work, \cite{AtDrSu-16},  we considered the case
$d=1$ and investigated the typical behavior of $X$ conditioned on
survival.  In  \cite[Theorem 1.2]{AtDrSu-16}, it is shown that there exists $c>0$ such that for all  $\varepsilon>0$,
\begin{equation} \label{eq:subdiff}
P_{t}^\gamma \Big( \Vert X \Vert_t \in \big(c t^{\frac13} ,\, t^{\frac{11}{24}+\varepsilon}\big) \Big) \asto{t} 1,
\end{equation}
with $\Vert f \Vert_t:= \sup_{s \in [0,t]}|f(s)|$ for $t \in (0,\infty)$ and $f \in D([0,t],\Z).$
In particular, this implies that under the Gibbs measure $P_{t}^\gamma$,
 the random walk behaves subdiffusively.  In the case of the random walks replaced by Brownian motions and hard killing, the upper bound on the fluctuations in \eqref{eq:subdiff} has been slightly sharpened in \cite{Oz-19}. More precisely,  if $X$ and the  $Y$-particles move according to independent Brownian motions in $\R,$ and $X$ is killed once it is within a fixed distance $r \in (0,\infty)$ of a $Y$-particle (corresponding to \lq $\gamma = \infty$\rq\ in our notation), then \"Oz  shows in \cite{Oz-19} that there exist constants $c, C  \in (0,\infty)$ such that
 \begin{equation} \label{eq:subdiff2}
P_{t}^\gamma \Big( \Vert X \Vert_t \in \big(c t^{\frac13} ,\, Ct^{\frac{5}{11}}\big) \Big) \asto{t} 1.
\end{equation}

In the current article, we consider dimensions $d\geq 6$ and show that $X$ under $P_t^\gamma$ has diffusive fluctuations and satisfies an invariance principle.  We now state our main result.

\begin{theorem}[Invariance Principle] \label{thm:fCLT}
   Let $d \ge 6$, and let $P^\gamma_t$ be the path measure defined in \eqref{eq:Gibbs},
   where the reference walk $X$ is a simple symmetric random walk on $\Z^d$ with jump rate $\kappa>0$. Furthermore, assume the traps evolve as a Poisson system of independent simple symmetric random walks with density $\alpha>0$ and jump rate $\rho>0$. Then under $P_{t}^\gamma$, the diffusively rescaled paths $(X_{st}/\sqrt{t})_{s\in [0,1]}$ converge in distribution to $(\sigma B_s)_{s\in [0,1]}$ on the Skorohod space $D([0,1], \R^d)$ as $t\to\infty$, where $B$ is a standard Brownian motion on $\R^d$ and $\sigma\in [0,\infty)$ is a deterministic constant. Furthermore, $\sigma>0$ for $\gamma> 0$ sufficiently small.
 \end{theorem}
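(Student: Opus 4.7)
The plan is to first integrate out the trap field so that $P_t^\gamma$ becomes a Gibbs measure for a long-range potential on the single path $X$, then to recast this Gibbs measure as a marginal of an equilibrium state for a symbolic dynamical system on path-segment symbols, and finally to derive the invariance principle from the spectral gap of the associated transfer operator.

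The starting point is Poisson calculus. Since the traps form an i.i.d.\ Poisson field of independent random walks, for any fixed realization of $X$ one has
\begin{equation*}
Z_{t,X}^\gamma
= \exp\Bigl\{-\alpha\sum_{y\in\Z^d}\bigl(1-E_y^Y\bigl[e^{-\gamma\int_0^t \ind{1}_{\{X_s=Y_s\}}\,{\rm d}s}\bigr]\bigr)\Bigr\}=:e^{-V_t(X)},
\end{equation*}
which turns $P_t^\gamma$ into a Gibbs measure proportional to $e^{-V_t(X)}\P_0^X({\rm d}X)$. I would partition $[0,t]$ into blocks $I_k=[kT,(k+1)T]$ of a mesoscopic length $T$ and treat the path-segments $\omega_k:=X|_{I_k}$ as symbols of a two-sided shift. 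Sorting the contributions to $V_t(X)$ according to the block hosting the first collision with each trap produces a decomposition
\begin{equation*}
V_t(X)=\sum_k \phi(\omega_k)+\sum_{k<\ell}\psi_{\ell-k}(\omega_k,\omega_\ell)+\text{boundary},
\end{equation*}
where $\phi$ collects the single-block self-contribution and $\psi_m$ captures the residual interaction of two blocks separated by $m$ gaps. The size of $\psi_m$ is controlled by the expected collision local time of two independent random walks starting far apart in time and/or space, which in dimension $d\geq 6$ has fast enough spatial and temporal decay that the effective potential on the shift space has summable variation in the sense of thermodynamic formalism.

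The next step is to build the Ruelle transfer operator $\mathcal{L}$ associated with this potential and perform its spectral analysis. The alphabet here is uncountable and non-compact: each symbol is essentially a random-walk excursion of length $T$, with Gaussian-tailed law under the reference measure. I would combine summable variation with a Lyapunov-type estimate for $\phi$ to show that $\mathcal{L}$ preserves a suitable cone of densities with a Hilbert--Birkhoff contraction; this should yield a simple leading eigenvalue, an equilibrium measure $\mu_\infty$ on the shift, and a spectral gap for the normalized operator. Consistency of $\mu_\infty$ with the finite-volume measures $P_t^\gamma$, together with the removal of the boundary terms, is then handled by the same summable variation estimates.

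The invariance principle then follows from the spectral gap. Writing $X_{kT}$ as a Birkhoff sum of the block-increments $\Delta_k:=\omega_k(T)-\omega_k(0)$, exponential mixing under $\mu_\infty$ supplies a Gordin martingale decomposition for $\sum_k \Delta_k$, hence a CLT with a deterministic variance $\sigma^2$ and a Donsker-type functional extension. For the positivity of $\sigma$ when $\gamma$ is small, I would analytically perturb the dominant eigenvalue and eigenprojector of $\mathcal{L}$ in $\gamma$, so that $\sigma^2(\gamma)$ depends continuously on $\gamma$ near $\gamma=0$, where it equals the diffusion constant of the free walk and is strictly positive. The main obstacle I foresee is the extension of Ruelle--Perron--Frobenius to the non-compact uncountable alphabet: cone contraction is not automatic, and the Lyapunov estimate needed to salvage it is precisely where the hypothesis $d\geq 6$ is used in an essential way, since it is only then that the intersection statistics of the underlying walks yield summable variation of the effective potential.
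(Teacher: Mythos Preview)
Your overall architecture---integrate out traps, recode the path as symbols, run Ruelle--Perron--Frobenius, deduce a functional CLT---matches the paper. But three concrete points are off and would block the argument as written.

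First, the decomposition $V_t(X)=\sum_k\phi(\omega_k)+\sum_{k<\ell}\psi_{\ell-k}(\omega_k,\omega_\ell)+\text{boundary}$ is not available. After integrating out the Poisson field one gets $V_t(X)=\alpha\gamma\int_0^t v(s,X)\,{\rm d}s$ with $v(s,X)=\E^Y_{X_s}[\exp\{-\gamma\int_0^s\delta_0(Y_r-X_{s-r})\,{\rm d}r\}]$, which is a genuinely nonlinear functional of the \emph{entire} past $(X_{s-r})_{r\ge 0}$ and does not split into a pair interaction. The paper avoids this by taking unit-length increments as symbols and defining the potential directly as $\varphi(\overline x)=-\alpha\gamma\int_{-1}^0 v(s,K\overline x)\,{\rm d}s$, then bounding $\var_n(\varphi)$ via a local CLT for the $Y$ transition kernel; no pairwise splitting is needed or attempted.

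Second, you have misplaced the role of $d\ge 6$. Summable variation of $\varphi$ holds already for $d\ge 5$: the paper shows $\var_n(\varphi)\le Cn^{-d/2+1}$. What fails for $d=5$ is not summable variation but the $L^2$ mixing bound required for the invariance principle: one needs $\var_n(\varphi)=O(n^{-r})$ with $r>3/2$, i.e.\ $d\ge 6$, so that $\sum_n n^{-1/2}\Vert\Pi_h^n\bar f\Vert_2<\infty$.

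Third---and this is the main gap---there is no spectral gap or exponential mixing here. A potential with merely summable (polynomial) variation is not H\"older, and cone contraction in the Hilbert--Birkhoff metric does not go through on an uncountable non-compact alphabet without H\"older regularity. The paper instead (i) proves RPF by a Ces\`aro averaging/Feller-type argument that yields the eigenmeasure and eigenfunction without any spectral gap, and (ii) obtains \emph{polynomial} $L^2$ decay $\Vert\Pi_h^n\bar f\Vert_2\le C n^{-(r-1-\varepsilon)}$ by extending Pollicott's method, upgrading his $L^1$ contraction to $L^2$ via Riesz--Thorin interpolation. This polynomial rate is exactly strong enough (when $r>3/2$) to feed into the Maxwell--Woodroofe/Peligrad--Utev criterion for the invariance principle. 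Your Gordin decomposition and the small-$\gamma$ continuity argument for $\sigma^2>0$ are in the right spirit, but they ride on polynomial mixing, not on a spectral gap.
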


In view of the sub-diffusive fluctuation in $d=1$, the diffusive fluctuation established in Theorem \ref{thm:fCLT} is perhaps not entirely obvious.  As explained in \cite[Section 3]{AtDrSu-17}, the heuristics for sub-diffusive fluctuation in low dimensions is based on a specific survival strategy that also applies to the case of immobile traps. The strategy is to create a ball of radius $R_t\ll \sqrt{t}$ free of traps at time $0$ and then force the particle $X$ to stay inside the ball and the traps to stay outside the ball up to time $t$. We then optimize over $R_t$ to maximize the survival probability, which leads to $R_t=t^{1/3}$ in $d=1$, $R_t=o(t^{\epsilon})$ for any $\epsilon>0$ in $d=2$, and $R_t=O(1)$ in $d\geq 3$. This seems to suggest that in $d\geq 3$, $X$ would be localized in a ball of constant radius. However, it only takes $O(1)$ amount of time to create a ball with a constant radius that is free of traps. It is then natural to expect that random fluctuations would cause that ball to move diffusively over time. Theorem \ref{thm:fCLT} validates this heuristic. We conjecture that the same result holds in $d\geq 3$, although our techniques are limited to $d\geq 6$. We also conjecture that the diffusion coefficient $\sigma^2$ in Theorem \ref{thm:fCLT} is strictly positive for all $\gamma\geq 0$.

As stated earlier, in contrast to the setting of immobile traps, where much is understood (see \cite{Sz-98, Ko-16} and the references therein), very little is known in the setting of mobile traps other than the asymptotics of the quenched survival probability stated in \eqref{eq:annealedAsymptotics} and the bounds on path fluctuation in $d=1$ stated in \eqref{eq:subdiff} and \eqref{eq:subdiff2}. The invariance principle established in Theorem \ref{thm:fCLT} is another significant step in the literature. One reason for the  slow progress in this area is that the path measure formulated as Gibbs measures in \eqref{eq:Gibbs} is in general non-consistent, which makes understanding the limiting behavior much harder.

Our proof of Theorem \ref{thm:fCLT} uses the theory of thermodynamic formalism, which originated in the study of dynamical systems. Broadly speaking,
thermodynamic formalism is the  study of  Gibbs measures with an interaction potential. The formalism allows the potential to be very general and helps understanding infinite volume limits in diverse areas as   geometric measure theory, Riemannian geometry, number theory, along with dynamical systems and statistical mechanics. Making the conceptual connection to the trapping problem is a key novelty of our approach. As we will detail in Section \ref{sec:previous} below, standard results from the theory of thermodynamic formalism are insufficient for our purposes, largely due to the restriction of state spaces (i.e.\ alphabet space) on which the Gibbs measure lives. However, we were able to extend and improve several of these very deep classical results to our setting. This involved several nuanced technical intermediate steps and some of the derived results may be of independent interest in the field of thermodynamic formalism. These will be explained further in Section \ref{sec:nov}. What is more, we presume that our work will generate further interest in the theory of thermodynamic formalism within the probability community.

\subsection{Novelty and Overview of Proof}\label{sec:nov}

To establish the connection to thermodynamic formalism, we decompose the random walk path $X$ into independent increments over time intervals of length one. The Gibbs weight $Z^\gamma_{t, X}$ in \eqref{eq:condAnnProb2} can then be rewritten in terms of a functional of the path increments $(X_{i+s}-X_i)_{s\in [0,1]}$ for $i\in \N_0$ (see \eqref{tZgttX} below for a precise formulation). In the language of thermodynamic formalism, the alphabet space then includes all c\`adl\`ag paths $D([0,1],\Z^d)$ starting from $0$, which is an uncountable and non-compact set, and hence standard results from the literature do not apply.  Therefore, the main technical contributions of our paper are  extensions of two classical results in the theory of thermodynamic formalism: (i) a Ruelle-Perron-Frobenius Theorem,  and (ii) rates of mixing for the equilibrium measure, to the setting where the potential acts on an uncountable non-compact alphabet space and is assumed to have only summable variation. We describe below these extensions along with a brief overview of the key steps in the proof of Theorem \ref{thm:fCLT}.

In principle, we could also decompose the random walk path $X$ into independent increments of either time duration $1$ or till the next time the random walk makes a jump, whichever occurs first. This would lead to a compact alphabet space, which allows one to apply existing results from thermodynamic formalism. The technical complications arising from random time durations can probably be overcome. However, we followed the above approach due to the intrinsic interest in extending the theory of thermodynamics formalism to an uncountable and non-compact alphabet space.

\begin{itemize}

\item[(i)] {\em Ruelle-Perron-Frobenius Theorem:}  We extend \cite[Theorem 3.3]{Wa-75} for Markov shifts with a finite alphabet $S$ and a potential $\phi$ of summable variation to the setting of an uncountable non-compact alphabet in Theorem \ref{thm:nuex}. We assume that our alphabet space is a complete separable metric space endowed with a probability measure (see \eqref{eq:locComp}) and that the potential is bounded, uniformly continuous, and has  summable variation (see \eqref{eq:varDef}). In Theorem \ref{thm:nuex}, we then prove the existence and uniqueness of a positive eigen-function, resp.\ eigen-probability measure, for the
Ruelle transfer operator, resp.\ its adjoint. Furthermore, we establish the  convergence of the normalized iterates of the transfer operator.  While our proof of Theorem \ref{thm:nuex} follows the strategy of Walters' proof of  \cite[Theorem 3.3]{Wa-75}, the non-compactness of the alphabet space  introduces significant difficulties. For example, the existence of a positive eigen-measure and eigen-function  cannot be shown via a standard application of Schauder–Tychonoff fixed point theorem. Here, we take advantage of an alternative Ces\'aro averaging argument  that is often used to establish the existence of a stationary distribution for a Feller Markov process (see e.g.\ \cite[Chapter I, Proposition 1.8]{Li-05}).

\item[(ii)] {\em  Mixing and Invariance Principle:} As  detailed in Remark \ref{R:Pih}, the positive eigenfunction from Theorem \ref{thm:nuex} \ref{item:harmonic} allows us to $h$-transform the non-conservative Markov transition kernel  associated with the Ruelle transfer operator into the probability transition kernel of a Markov chain.  We extend a result of Pollicott \cite{Po-00} on the temporal decay of correlation for this Markov chain under stationarity (see Theorem \ref{T:Pollicott}).  Pollicott had established the result in the case of  a finite alphabet. Furthermore, we improve Pollicott's bound on correlation decay from an $L^1$ bound to an $L^2$ bound of the same order, which is key in deducing an invariance principle for additive functionals of the stationary Markov process (see Corollary \ref{C:invariance}).  Our proof of Theorem \ref{T:Pollicott} follows closely that of Pollicott \cite{Po-00} with the crucial novelty of applying the Riesz-Thorin interpolation theorem to obtain the above-mentioned improvement from an $L^1$ bound to an $L^2$ bound of the same order.  The invariance principle we formulate in Corollary \ref{C:invariance} is a direct application of a condition in \cite{MePeUt-06}.

\item[(iii)] {\em Proof sketch for Theorem \ref{thm:fCLT}:}  In order to apply the above results from thermodynamic formalism to our original trapping model, we first show in  Proposition \ref{prop:sumVar} that the potential for our model is uniformly continuous and has summable variation in dimensions $d \geq 5$. As a consequence, the assumptions of the Ruelle-Perron-Frobenius theorem that we prove in Theorem \ref{thm:nuex} are satisfied. This allows us to compare the increments of $X$ (under the Gibbs measure $P^\gamma_t$) on the time interval $[\log t, t-\log t]$ to an ergodic process of increments; in fact, this process is the Markov chain defined via the $h$-transform of the Ruelle transfer operator (see also Remark \ref{R:Pih}).  We first prove that ignoring the path $X$ in the time intervals $[0, \log t]$ and $[t-\log t, t]$ and ignoring the fluctuations of the path between consecutive integer times has no effect in the scaling limit (see Corollary \ref{C:approx} and Lemma \ref{L:abscon}). The gap of size $\log t$ at the beginning of the time interval $[0, t]$ allows the ergodic Markov chain to relax and be close to its stationary distribution (see Lemma \ref{L:approx2}), while the $\log t$ gap at the end takes care of forgetting boundary effects of the $h$-transform. In Lemma \ref{L:inv}, we deduce that this stationary process is mixing fast enough thanks to Theorem \ref{T:Pollicott}, which allows
    us to appeal to the invariance principle from Corollary \ref{C:invariance}  and obtain  the convergence of the scaled path to a Brownian motion.
    This is the only place where we require $d\geq 6$ instead of $d\geq 5$ (see \eqref{eq:d6} and the discussions below it).
     We conclude the proof of  Theorem \ref{thm:fCLT} by showing that the diffusion coefficient in front of the Brownian motion is strictly positive whenever the interaction parameter $\gamma >0$ is sufficiently small (see  Proposition \ref{P:posgamma}).

     Extending Theorem \ref{thm:fCLT} to lower dimensions, especially $d=3, 4$, will probably require new approaches because the potential for our model no longer has summable variation.

\end{itemize}

\subsection{Related Results in the Literature } \label{sec:previous}
Functional central limit theorems have been investigated in related models  for Gibbs measures relative to Brownian motion.  In this setting, paths are attributed an energy which most often is a pair interaction of increments of these paths.   The interaction potentials are typically assumed to decay exponentially in time, but are allowed to have a singularity at the origin in space.  Central limit theorems are obtained in the infinite volume limit in a variety of settings; see for instance (we caution this is not an exhaustive list)  \cite{SPOHN1987278}, \cite{BeSp-05}, \cite{BetzEtAl},  \cite{betz2021functional}, \cite{MR4054359}, \cite{Mu-17}, \cite{Gu-06}, and \cite{GuL09}.  Such models can e.g.\ include models from quantum mechanics such as the Nelson model and also Polaron measures, where the energy exhibits a self-attractive  singularity at the origin in space.

 We comment briefly on  the techniques used to prove these results.  In \cite{BeSp-05}, a functional central limit theorem is derived by rewriting the original problem in terms of a Markov process and then ultimately reducing to classical results by Kipnis and Varadhan \cite{KiVa-86}. In \cite{SPOHN1987278} they recast the problem as a model of spins on $\Z$ with single spin space $C([0,1],\R^d)$, and in \cite{Gu-06}, they  consider increments of the Wiener path as basic spin space. To obtain a central limit theorem for the sample paths with respect to the infinite volume limit Gibbs measure, they require estimates on the correlation of functionals of Gibbs measure.  These correlation bounds are  obtained using the earlier work on Gibbs measures in \cite{MR661133} and \cite{Bo-82}. Once the decay of the interaction is fast enough, the non-triviality of the diffusion constant is obtained as well, consequently excluding a sub-diffusive behavior.

 We were not able to adapt the above results nor the techniques to our model for the following reasons. Indeed, while we can also rewrite our Gibbs measure in terms of a functional of the path increments (with an additional exponential), in our setting  the energy is a highly non-linear functional of the entire trajectory (see  \eqref{eq:GibbsRewrite} below), not just a function of of the difference of two increments. Secondly we need to allow  our potential to only exhibit weak polynomial decay, as is manifested in terms of a summable variation condition of the potential, cf.\ \eqref{eq:sumVar} below.

Let us also comment on the relation of our results to the vast literature of thermodynamic formalism. To the best of our knowledge, most results are for Markov shifts with either a finite \cite{Wa-75, Po-00}, or countable \cite{Sa-15}, or compact alphabet \cite{CiSi-16}. Under the additional assumptions that the alphabet is a compact metric space equipped with a Borel measure, and that the potential belongs to the so-called weak Walters class, the analogue of Theorem \ref{thm:nuex} has been proved in \cite[Theorem 4.4]{CiSi-16}. When the alphabet is a standard Borel space and the potential is H\"older continuous, related results were obtained in \cite{CiSiSt-19} (see also \cite{LMSV21} when the alphabet is $\R$). To tackle our original trapping problem, it is natural to consider a non-compact alphabet with a potential that has summable variation which is not H\"older continuous. However,  we have not been able to find results in the literature that covered this case. This motivated us to formulate and prove Theorem \ref{thm:nuex}, which is of independent interest and which we expect to be useful for other problems from statistical physics also.
\medskip

 {\bf Layout of the rest of the paper:}
In Section \ref{sec:thermo}, we state and prove all the required results in thermodynamic formalism. In Section \ref{sec:reform}, we will reformulate our trapping problem in the thermodynamic formalism and show that the potential has the desired regularity properties. In Section \ref{sec:fCLT}, we  prove Theorem \ref{thm:fCLT} using the results from Section \ref{sec:reform}. In each section, we will provide a brief overview of the results and proofs in that section.

\section{Thermodynamic Formalism} \label{sec:thermo}
The theory of thermodynamic formalism is a powerful tool in the study of dynamical systems with roots in equilibrium statistical physics, see e.g.\ the lecture notes \cite{Bo-08} and the survey \cite{Sa-15}. Most results require either a finite \cite{Wa-75, Po-00}, or countable \cite{Sa-15}, or compact alphabet \cite{CiSi-16}. Only \cite{CiSiSt-19}, resp.\ \cite{LMSV21}, considered the case when the alphabet is a standard Borel space, resp.\ $\R$.

Our goal here is to extend two classic results in thermodynamic formalism to the setting of an uncountable non-compact alphabet equipped with a metric $\d$ and a probability measure $\mu$. The first result that we extend is Walters' Ruelle-Perron-Frobenius theorem \cite{Wa-75} for Markov shifts with a finite alphabet $S$ and a potential of summable variation (see Theorem \ref{thm:nuex} below), which will be stated and proved in Sections \ref{S:RPF} and \ref{sec:claim}
(when the potential is H\"older continuous, such a Ruelle-Perron-Frobenius theorem was proved in \cite{CiSiSt-19}). The second result that we extend is a mixing result of Pollicott \cite{Po-00} (see Theorem \ref{T:Pollicott} below). It will be treated in Section \ref{sec:decay}, where we also improve Pollicott's result from an $L^1$ bound to an $L^2$ bound, which is essential in deducing an invariance principle for additive functionals of an ergodic process arising from the thermodynamic formalism (see Corollary \ref{C:invariance}).

\subsection{Markov shifts and Ruelle-Peron-Frobenius Theorem}\label{S:RPF}
We first recall the basic setup of thermodynamic formalism, which can be thought of as the study of Gibbs measures on a space of semi-infinite sequences.
More precisely, we consider an arbitrary
\begin{equation}\label{eq:locComp}
\text{complete separable metric space $(S,\d)$ equipped with a Borel probability measure $\mu$.}
\end{equation}
 Without loss of generality, we may assume ${\rm supp}(\mu)=S$, otherwise we just replace $S$ by the support of $\mu$.
The semi-infinite sequence space is then given by
\begin{equation} \label{eq:SigmaDef}
\Sigma:=S^{\Z_-} \qquad (\Z_-:=-\N),
\end{equation}
where $\N := \{1, 2, \ldots\},$ and
we write ${\cal B}$ for the Borel-$\sigma$-algebra  on $\Sigma$ with the product topology.
As a shorthand we will usually write $\overline{x}=(\ldots x_{-2}x_{-1})$ instead of $(\ldots, x_{-2},x_{-1})$ for elements of $\Sigma,$ and similarly for elements in $S^N,$ $N \in \N.$

In a slight abuse of notation, we also write $\d$ for the metric
\begin{equation} \label{eq:prodMet}
\d(\overline x, \overline w) := \sum_{i=1}^{\infty} \frac{1}{2^{i}} \min\{\d(x_{-i}, w_{-i}),1\}, \qquad \overline{x}, \overline{w} \in \Sigma,
\end{equation}
on $\Sigma.$ We can extend this definition to
$\Sigma \cup \bigcup_{n=1}^\infty S^n$ by first extending $S$ to
$S^*=S\cup\{*\}$ with an isolated cemetery state $* \notin S$, and define $\d(*, z) :=1$ for all $z\in S$ and $\d(*, *):=0$.
For any $(x_i)_{-n\leq i\leq -1} \in S^n$, we extend it to an element $\overline{x}\in (S^*)^{\Z_-}$ by setting  $x_i :=*$ for all $i<-n$. We can then define $\d$ on $\Sigma \cup \bigcup_{n=1}^\infty S^n$ by setting
\begin{equation}\label{eq:prodMet2}
\d\big((x_i)_{-m\leq i\leq -1}, (w_j)_{-n\leq j\leq -1}\big)
:= \d(\overline{x}, \overline{w}), \qquad x_i, w_j \in S,
\end{equation}
where $m, n\in \N\cup\{\infty\}$ and $\d(\overline{x}, \overline{w})$ is defined as in \eqref{eq:prodMet}.

By assumption \eqref{eq:locComp} and Prokhorov's theorem, there exists an increasing sequence of compact sets
$S_1\subset S_2 \subset \ldots$ such that $\mu(S_1)>0$ and
$\mu(S_n)\uparrow 1$. For later purposes, we will also introduce the  increasing sequence of subsets of $\Sigma$ given by
\begin{equation}\label{Sigm}
\Sigma^{(m)}
:=\big\{\overline{x}=(\ldots x_{-2} x_{-1}) \in \Sigma : x_{-n} \in S_{m+n} \mbox{ for all } n\in\N \big\}, \qquad m\in \N.
\end{equation}
Note that
$\Sigma^{(m)}$ is compact for each $m \in \N$ due to Tychonoff's theorem, and  that the assumption ${\rm supp}(\mu)=S$ entails that
$\bigcup_{m \in \N} \Sigma^{(m)}$ is dense in $(\Sigma, \d)$.

We will furthermore use the following notation:
\begin{itemize}
    \item $\mathcal{M}(\Sigma)$/$\mathcal{M}_+(\Sigma)$/$\mathcal{M}_1(\Sigma)$: the space of finite signed/finite positive/probability measures on $(\Sigma, {\cal B})$, equipped with the weak topology;
    \item $C(\Sigma)/C_b(\Sigma)/C_{b,u}(\Sigma)$:  the set of all real-valued continuous/bounded continuous/bounded uniformly continuous functions on $\Sigma$.
\end{itemize}

Given $\phi \in C_b(\Sigma)$, which we will refer to as a potential in the following, we can define an---in general non-conservative---Markov transition kernel $\Pi:\Sigma \to \mathcal{M}_+(\Sigma)$ via
\begin{equation}\label{eq:origTrans}
    \Pi(\overline{x}, A) = \int_S 1_{\{\overline{x}z\in A\}} e^{\phi(\overline{x}z)} \mu({\rm d}z), \qquad \overline{x}\in \Sigma, A\in {\cal B},
\end{equation}
where $\overline{x}z:=(\dots x_{-2} x_{-1} z)\in\Sigma$ is obtained by appending the symbol $z\in S$ to \lq the right of $\overline{x}.$\rq\ In the language of thermodynamic formalism, the Ruelle transfer operator $L_\phi$ is then defined via
\begin{equation} \label{eq:phiDef}
(L_\phi f)(\overline x) := (\Pi f)(\overline x) := \int_\Sigma f(\overline{y}) \, \Pi(\overline{x}, {\rm d}\overline{y})
= \int_S  f(\overline {x}z)  e^{\phi(\overline{x}z)}\,  \mu({\rm d}z),
\end{equation}
for all $f\in C_b(\Sigma)$.
Note that $L_\phi$ maps from $C_b(\Sigma)$ to  $C_b(\Sigma)$ by dominated convergence.
Furthermore,  the map $L_\phi^*: \mathcal{M}(\Sigma) \to \mathcal{M}(\Sigma)$ is defined via
 \begin{equation} \label{eq:adjointDefI}
 \big(L_{\phi}^{*} \nu\big)( {\rm d}\overline{y}) :=(\nu \Pi)({\rm d} \overline{y}) := \int_\Sigma \nu({\rm d} \overline x)\, \Pi(\overline{x}, {\rm d}\overline{y}),  \qquad \, \nu \in \mathcal{M}(\Sigma).
 \end{equation}
 This notation is suggested by the observation that for $ f\in C_b(\Sigma)$ and $\nu \in \mathcal{M}(\Sigma),$ if we denote their dual pairing by
 $$
 \langle \nu, f\rangle := \int_\Sigma f(\overline{x}) \,  \nu({\rm d}\overline{x}),
$$
then $\langle \nu, L_\phi f\rangle = \langle L_\phi^* \nu, f\rangle$.

When there is a clear probabilistic interpretation, we will often use the transition kernel $\Pi$ instead of the operators $L_\phi$ and $L_\phi^*$.
In this vein,  note that in the language of Gibbs measures, $\frac{(L_\phi^*)^n \nu}{\langle (L_\phi^*)^n \nu, 1\rangle} = \frac{\nu\Pi^n}{\langle \nu \Pi^n, 1\rangle}$
defines a Gibbs measure with reference measure $\nu({\rm d}\overline{x})\mu({\rm d}z_1)\ldots \mu({\rm d}z_n)$ for $\overline{x}z_1\ldots z_n\in\Sigma$ and Gibbs weight $e^{\sum_{i=1}^n\phi(\overline{x}z_1\ldots z_i)}$.

For $\phi\in C_b(\Sigma)$, the following result establishes that the Markov transition kernel $\Pi$ satisfies the
so-called Feller property.
\begin{lemma}[Feller Property] \label{cl:adjCont}
If $\phi\in C_b(\Sigma)$, then the mapping $L_\phi^*: \mathcal{M}(\Sigma) \to \mathcal{M}(\Sigma)$ is continuous with respect to the weak topology.
\end{lemma}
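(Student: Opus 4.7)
The plan is to reduce the claim to the statement that the Ruelle transfer operator $L_\phi$ maps $C_b(\Sigma)$ into itself. By definition of the weak topology on $\mathcal{M}(\Sigma)$, a linear map $T:\mathcal{M}(\Sigma) \to \mathcal{M}(\Sigma)$ is weakly continuous if and only if for every $f \in C_b(\Sigma)$ the functional $\nu \mapsto \langle T\nu, f\rangle$ is weakly continuous on $\mathcal{M}(\Sigma)$. Applying this criterion with $T = L_\phi^*$ and using the duality $\langle L_\phi^* \nu, f\rangle = \langle \nu, L_\phi f\rangle$, the weak continuity of $L_\phi^*$ is equivalent to the requirement that $\nu \mapsto \langle \nu, L_\phi f\rangle$ be weakly continuous for each $f \in C_b(\Sigma)$, and this is immediate as soon as $L_\phi f \in C_b(\Sigma)$. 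Thus the task reduces to verifying this stability under $L_\phi$.

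Boundedness of $L_\phi f$ is immediate: since $\mu$ is a probability measure and $\phi$ is bounded,
$$\|L_\phi f\|_\infty = \sup_{\overline{x}\in\Sigma} \Big| \int_S f(\overline{x}z)\, e^{\phi(\overline{x}z)}\,\mu(\d z)\Big| \le \|f\|_\infty\, e^{\|\phi\|_\infty}.$$
For continuity, I fix a sequence $\overline{x}_n \to \overline{x}$ in $(\Sigma,\d)$. Using the definition of the product metric in \eqref{eq:prodMet}, a short direct computation (reindexing the summation after appending $z$) gives the contraction identity $\d(\overline{x}_n z,\overline{x} z) = \tfrac12\, \d(\overline{x}_n,\overline{x}) \to 0$ for every fixed $z\in S$. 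Since $f,\phi \in C_b(\Sigma)$ are continuous with respect to $\d$, this yields pointwise convergence
$$f(\overline{x}_n z)\, e^{\phi(\overline{x}_n z)} \;\longrightarrow\; f(\overline{x} z)\, e^{\phi(\overline{x} z)}\qquad \text{for every } z\in S,$$
and the integrands are uniformly bounded by $\|f\|_\infty e^{\|\phi\|_\infty}$. Since $\mu$ is a finite (probability) measure, dominated convergence yields $L_\phi f(\overline{x}_n) \to L_\phi f(\overline{x})$, proving that $L_\phi f \in C_b(\Sigma)$.

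I do not anticipate any serious obstacle in this argument; the only point deserving emphasis is that $\mu(S) = 1$ is what makes the dominated convergence step trivial despite the non-compactness of the alphabet $S$. In particular, no tightness or approximation by the compact exhaustion $\{S_n\}_{n\in\N}$ of \eqref{Sigm} is needed at this stage—such considerations will only become relevant in the subsequent sections, when one turns to eigen-measures and the associated Ruelle--Perron--Frobenius analysis.
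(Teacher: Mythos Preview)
Your proof is correct and follows essentially the same approach as the paper: both reduce weak continuity of $L_\phi^*$ to the fact that $L_\phi f \in C_b(\Sigma)$ via the duality $\langle L_\phi^*\nu, f\rangle = \langle \nu, L_\phi f\rangle$, with the latter verified by dominated convergence. You supply a few more details (the explicit contraction $\d(\overline{x}_n z,\overline{x} z)=\tfrac12\,\d(\overline{x}_n,\overline{x})$ and the boundedness estimate), but the argument is the same.
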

\begin{proof}
Let $(\nu_n)_{n \in \N}$ be a sequence of measures in $\mathcal M(\Sigma)$ converging weakly to $\nu \in \mathcal M(\Sigma).$ Then for any $f \in C_b(\Sigma),$  we have
$$
\langle L_\phi^*\nu_n, f\rangle = \langle \nu_n, L_\phi f\rangle
\to \langle \nu, L_\phi f\rangle = \langle L_\phi^* \nu, f\rangle,
$$
since $L_\phi f\in C_b(\Sigma)$ as observed below \eqref{eq:phiDef}. Such a convergence for all $f\in C_b(\Sigma)$ is exactly the definition of the weak convergence $L_\phi^*\nu_n \Rightarrow L_\phi^*\nu$.
\end{proof}

To formulate our results in terms of the thermodynamic formalism, we will impose the following additional assumptions on the potential $\phi \in C_b(\Sigma)$:
\begin{enumerate}[label=\textbf{(A\arabic*)}]
    \item \label{item:sumVar} (Summable Variation)
     For $\phi \in C_b(\Sigma)$ and $n \in \N_0$, we define its {\em $n$-th variation} by
 \begin{equation} \label{eq:varDef}
  {\rm var}_n (\phi) :=
   \sup_{\overline x,\overline y \in \Sigma }  \sup_{z_1, z_2, \ldots, z_n \in S} | \phi(\overline{x} z_1 \ldots z_n) - \phi(\overline{y} z_1 \ldots z_n) |,
 \end{equation}
    and define $\var_n^\infty(\phi):=\sum_{k=n}^\infty \var_k(\phi)$. We assume that
 \begin{equation}  \label{eq:sumVar}
 M:=  \sum_{n =0}^\infty {\rm var}_n (\phi) < \infty,
 \end{equation}
 which is referred to as $\phi$ having {\em summable variation}.

     \item \label{item:cont} (Uniform Continuity)
    For every $\varepsilon >0$  there exists $\delta >0$ such that
    $$
    \sup_{\substack{\overline x, \overline y \in \Sigma:\, \d(\overline{x}, \overline{y}) \leq \delta }} |\phi(\overline{x}) - \phi(\overline{y})|\leq \varepsilon.
    $$
\end{enumerate}

We are now ready to state and prove the Ruelle-Perron-Frobenius type result which we were alluding to before.
\begin{theorem}[Ruelle-Perron-Frobenius] \label{thm:nuex}
 Assume (\ref{eq:locComp}) and that $\phi \in C_b(\Sigma)$  satisfies \ref{item:sumVar}--\ref{item:cont}.  Then:
  \begin{enumerate}[label=\alph*)]
  \item \label{item:fixedPoint} There exist $\lambda >0$  and $\nu \in \mathcal{M}_1(\Sigma)$ such that
  \begin{equation} \label{eq:eigenmeasure}
  L^{*}_\phi \nu = \lambda \nu.
  \end{equation}

  \item \label{item:harmonic} For all $\lambda$ and $\nu$ as in \ref{item:fixedPoint}, there exists $ h \in C_{b,u}(\Sigma)$ (more precisely, $h\in \Lambda$ as defined in \eqref{eq:Lambda} below)  with $\inf h>0$, $\langle \nu, h\rangle =1, $ and $L_\phi h = \lambda h.$

  \item \label{item:sgConv} For  $\lambda$ and $\nu$ as in \ref{item:fixedPoint}, $h$ as in \ref{item:harmonic}, and for all $f \in C_{b}(\Sigma)$ and $\overline x\in \Sigma$, we have $$\lim\limits_{n \rightarrow \infty} \frac{1}{\lambda^n} (L^n_\phi f)(\overline x) = \langle \nu, f\rangle\, h(\overline{x}),$$
  and the convergence is uniform on each of the compact sets $(\Sigma^{(m)})_{m\in\N}$ defined in \eqref{Sigm}.
\end{enumerate}
Furthermore, $\nu$ and $h$ are unique.
\end{theorem}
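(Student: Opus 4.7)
The plan is to follow the broad strategy of Walters' proof of \cite[Theorem 3.3]{Wa-75}, but to replace the Schauder--Tychonoff fixed-point argument (which cannot be applied directly since $\mathcal{M}_1(\Sigma)$ is not weakly compact when $\Sigma$ is non-compact) with a Ces\`aro averaging scheme of the type used to construct invariant measures for Feller Markov processes, as in \cite{Li-05}. The compact exhaustions $(S_m)_{m\in\N}$ of $S$ and $(\Sigma^{(m)})_{m\in\N}$ of $\Sigma$ from \eqref{Sigm} will be the key ingredient to obtain tightness wherever compactness fails.

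For part \ref{item:fixedPoint}, I would introduce the normalized map $T\nu := L_\phi^*\nu/\langle L_\phi^*\nu,1\rangle$ on $\mathcal{M}_1(\Sigma)$; boundedness of $\phi$ makes the denominator lie in $[e^{\inf\phi},e^{\sup\phi}]$, and Lemma \ref{cl:adjCont} ensures $T$ is weakly continuous. Starting from any probability $\nu_0$ supported on some $\Sigma^{(m_0)}$, I would show that the Ces\`aro averages $\overline\nu_n := \frac{1}{n}\sum_{k=0}^{n-1} T^k\nu_0$ are tight, by propagating the tail bound $\mu(S\setminus S_m)$ through the $k$-th iterate of $\Pi$ and using boundedness of $\phi$ to get $T^k\nu_0(\Sigma\setminus\Sigma^{(m)}) \to 0$ uniformly in $k$ as $m\to\infty$. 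A weak subsequential limit $\nu$ then satisfies $T\nu = \nu$ by continuity of $T$ and the fact that $\overline\nu_n - T\overline\nu_n \to 0$ in total variation, giving \eqref{eq:eigenmeasure} with $\lambda := \langle L_\phi^*\nu,1\rangle > 0$.

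For part \ref{item:harmonic}, the natural candidates are $h_n := \lambda^{-n}L_\phi^n 1$. The summable variation assumption \ref{item:sumVar} is pivotal: standard ratio bounds give $h_n(\overline x)/h_n(\overline y) \leq \exp(\var_k^\infty(\phi))$ whenever $\overline x, \overline y$ agree on their last $k$ coordinates, which, combined with \ref{item:cont} and a uniform bound $c_1 \leq h_n \leq c_2$ coming from the above ratio bound and the eigen-identity $\langle\nu,h_n\rangle = 1$, yields equicontinuity of $(h_n)$ on each compact $\Sigma^{(m)}$. A diagonal Arzel\`a--Ascoli extraction (or, if needed, a further Ces\`aro average in $n$) produces a limit $h$ with $\inf h > 0$; passing to the limit in the identity $\lambda h_{n+1} = L_\phi h_n$ via dominated convergence confirms $L_\phi h = \lambda h$, and rescaling fixes $\langle\nu,h\rangle = 1$.

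For part \ref{item:sgConv} and uniqueness, the idea is to establish mixing of $\Pi$: one writes $\lambda^{-n}L_\phi^n f(\overline x)$ as the integral of $f$ against a probability measure on $\Sigma$ depending on $\overline x$ (after dividing by $h$), and uses the summable variation bound to show that these measures converge in total variation to $\nu$ at a rate controlled by $\var_n^\infty(\phi)$, yielding uniform convergence on each $\Sigma^{(m)}$. Uniqueness of $\nu$ and $h$ is then immediate: any other eigen-probability measure $\nu'$ with eigenvalue $\lambda'$ would satisfy $\langle \nu',f\rangle = \lim_n (\lambda')^{-n}\langle\nu', L_\phi^n f\rangle$, forcing $\lambda' = \lambda$ and $\nu' = \nu$, and similarly for $h$. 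The principal obstacle I anticipate is Step 1: propagating tightness through iterates of the non-conservative kernel $\Pi$ on the non-compact alphabet is where the classical compact proof really breaks down, and it is precisely where the combination of the Feller property, the compact exhaustion of $S$, and Ces\`aro averaging must be carefully combined.
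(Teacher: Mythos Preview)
Your plan for part \ref{item:harmonic} is essentially what the paper does: it works in the invariant set $\Lambda$ of \eqref{eq:Lambda}, shows it is bounded and uniformly equicontinuous via the summable-variation ratio bound, and extracts $h$ from the Ces\`aro averages $A_n f = \frac{1}{n}\sum_{m=1}^n L_{\widetilde\phi}^m f$ by Arzel\`a--Ascoli on each $\Sigma^{(m)}$, exactly as you describe.

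For part \ref{item:fixedPoint}, however, your Ces\`aro argument has a gap. The normalized map $T\nu = L_\phi^*\nu/\langle L_\phi^*\nu,1\rangle$ is \emph{nonlinear}, so the telescoping identity $\overline\nu_n - T\overline\nu_n = \frac{1}{n}(\nu_0 - T^n\nu_0)$, which underlies the usual Krylov--Bogolyubov / \cite{Li-05} scheme, is false: $T$ applied to an average is not the average of the $T$-iterates. Tightness of $(T^k\nu_0)_k$ is indeed obtainable by coordinate-wise bounds of the type $\nu(\pi_k^{-1}(B)) \le C\mu(B)$, but this alone does not force a subsequential limit of the Ces\`aro averages to be a fixed point of $T$. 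The paper sidesteps this by a different route: it restricts to the compact sets $\Sigma^{(m)}$, applies Schauder--Tychonoff directly to the continuous (nonlinear) map $\widetilde L^*_{\phi,m}$ on $\mathcal M_1(\Sigma^{(m)})$ to get fixed points $\nu^{(m)}$, proves the family $(\nu^{(m)})_m$ is tight (Claim \ref{cl:tight}), and then shows any weak limit is a fixed point of $\widetilde L^*_\phi$ by comparing $\widetilde L^*_{\phi,m_k}\nu^{(m_k)}$ with $\widetilde L^*_\phi\nu^{(m_k)}$. So the Ces\`aro averaging is used for the \emph{eigenfunction} (where $L_{\widetilde\phi}$ is linear), not for the eigenmeasure.

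For part \ref{item:sgConv}, your claim that the $\overline x$-dependent measures converge to $\nu$ in total variation at a rate controlled by $\var_n^\infty(\phi)$ is too strong for this theorem: that is essentially the content of Theorem \ref{T:Pollicott}, which requires the additional polynomial-decay hypothesis $\var_n(\phi)=O(n^{-r})$ and considerable further work. The paper's proof of \ref{item:sgConv} is softer: it passes to the $h$-transformed Markov operator $\Pi_h$, shows $(\Pi_h^m\psi)_m$ is uniformly equicontinuous for $\psi\in C_{b,u}(\Sigma)$, extracts a subsequential pointwise limit $\psi_*$, and proves $\psi_*$ is constant via a ``strict positivity of the kernel'' argument (Claim \ref{cl:fstarinf}): from $\inf\Pi_h^m\psi_* = \inf\psi_*$ and the fact that $\Pi_h^m(\overline x, B(\overline y,\delta))>0$ uniformly in $\overline x$, one forces $\psi_*\equiv\inf\psi_*$. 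No quantitative rate is obtained at this stage. Uniqueness then follows as you say.
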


\begin{remark}\label{R:Pih}
The positive eigenfunction $h$ with eigenvalue $\lambda$ from Theorem \ref{thm:nuex} \ref{item:harmonic} allows us to $h$-transform the non-conservative Markov transition kernel $\Pi$ from \eqref{eq:origTrans} into the probability transition kernel $\Pi_h$ of a Markov chain with state space $\Sigma$:
\begin{equation}\label{eq:hTrans}
    \Pi_h(\overline{x}, A) := \frac{1}{\lambda h(\overline{x})} \int_A h(\overline{y}) \, \Pi(\overline{x}, {\rm d}\overline{y})  =  \frac{1}{\lambda h(\overline{x})} \int_S 1_{\{\overline{x}z\in A\}} h(\overline{x}z) e^{\phi(\overline{x}z)} \, \mu({\rm d}z), \quad A\in {\cal B}.
\end{equation}
We then observe that $\nu_h({\rm d}\overline{x}) := h(\overline{x}) \nu({\rm d} \overline{x})$, known as the Ruelle-Perron-Frobenius measure, is invariant with respect to $\Pi_h$, i.e., $\nu_h \Pi_h= \nu_h$. As a consequence, writing
\begin{equation*}
\Pi_h f(\overline{x}):=
 \frac{1}{\lambda h(\overline{x})} \int_S f(\overline{y})h(\overline{y}) \, \Pi(\overline{x}, {\rm d}\overline{y}),
\end{equation*}
the convergence in Theorem \ref{thm:nuex} \ref{item:sgConv}  can be rewritten as
\begin{align} \label{erg}
\begin{split}
    \lim_{n\to\infty} \Big(\Pi_h^n \Big(\frac{f}{h}\Big)\Big)(\overline{x}) = \langle \nu, f\rangle = \big \langle \nu_h, \frac{f}{h}\big\rangle.
\end{split}
\end{align}
Choosing $f=h \psi$ for $\psi\in C_b(\Sigma)$, the convergence in \eqref{erg} is seen to be equivalent to the weak convergence of $\Pi_h^n(\overline{x}, \cdot)$ to $\nu_h$ for any $\overline{x} \in \Sigma$. In other words, the Markov chain with state space $\Sigma$ and transition kernel $\Pi_h$ is ergodic with invariant measure $\nu_h$.
\end{remark}

We will now prove Theorem \ref{thm:nuex}. Our overall proof strategy follows that of Walter in \cite{Wa-75}. The non-compactness of the alphabet space $S$, however, introduces significant difficulties that need to be overcome with novel arguments. Throughout the proof, we rely crucially on the assumptions that $\phi$ has summable variation and is bounded as well as uniformly continuous.  In order not to hinder the flow of reading, we will prove Theorem \ref{thm:nuex} assuming various claims, and then provide the proofs of these auxiliary results in the subsequent Section \ref{sec:claim}.

\begin{proof}[Proof of Theorem~\ref{thm:nuex}~\ref{item:fixedPoint}]
If the alphabet $S$ was compact, we could find an eigen-probability measure $\nu$ for $L_\phi^*$ by applying the Schauder–Tychonoff fixed point theorem to the normalized map
\begin{equation}\label{tildeL*}
    \widetilde L^*_\phi \varrho := \frac{L^*_\phi \varrho}{\langle L^*_\phi \varrho, 1 \rangle } = \frac{L_\phi^*\varrho}{(L^*_\phi \varrho)(\Sigma)}, \qquad \varrho \in {\cal M}_1(\Sigma),
\end{equation}
which can be shown to define a continuous map from ${\cal M}_1(\Sigma)$ to ${\cal M}_1(\Sigma)$ (see Claim \ref{L*continuity} below). To handle our non-compact setting, we will first identify fixed points of maps $\widetilde L^*_{\phi, m}$, $m\in\N$, which approximate $\widetilde L^*_\phi$ but map from ${\cal M}_1(\Sigma^{(m)})$ back to itself, where $(\Sigma^{(m)})_{m\in\N}$ is the increasing sequence of compact subsets of $\Sigma$ defined in \eqref{Sigm}.  We then conclude by showing that the fixed points of $\widetilde L^*_{\phi, m}$, $m\in\N$, form a tight subset of ${\cal M}_1(\Sigma)$, and any limit point of this tight subset is actually a fixed point of $\widetilde L_\phi^*$.

For each $m\in\N$, we define
\begin{equation}\label{blSm}
(L^*_{\phi, m} \varrho)({\rm d} \overline{y}) :=
1_{\{\overline{y} \in \Sigma^{(m)}\}} (L_\phi^* \varrho) ({\rm d} \overline{y}), \qquad \varrho \in {\cal M}(\Sigma^{(m)}).
\end{equation}
We further define the normalized map
\begin{equation}\label{eq:adjointDef}
    \widetilde L^*_{\phi, m} \varrho := \frac{L^*_{\phi, m} \varrho}{\langle L^*_{\phi, m} \varrho, 1 \rangle } = \frac{L^*_{\phi, m} \varrho}{(L^*_\phi \varrho)(\Sigma^{(m)})} , \qquad \varrho \in {\cal M}_1(\Sigma^{(m)}),
\end{equation}
so that $\widetilde L^*_{\phi, m} \varrho \in {\cal M}_1(\Sigma^{(m)})$.

\begin{claim} \label{L*continuity}
The maps $\widetilde L^*_\phi : {\cal M}_1(\Sigma) \to {\cal M}_1(\Sigma)$ from \eqref{tildeL*} and $\widetilde L^*_{\phi, m} : {\cal M}_1(\Sigma^{(m)}) \to {\cal M}_1(\Sigma^{(m)})$, $m\in\N$, from \eqref{eq:adjointDef}  are all continuous.
\end{claim}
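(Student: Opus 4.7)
The plan is to reduce both continuity assertions to the Feller property already established in Lemma~\ref{cl:adjCont} plus a uniform lower bound on the normalizing constants $\langle L^*_\phi \varrho,1\rangle$ and $\langle L^*_{\phi,m}\varrho,1\rangle$. Since the normalized maps are ratios, continuity of numerator and a strictly positive lower bound on the denominator will immediately give continuity of the quotient in the weak topology.

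For $\widetilde L^*_\phi$, suppose $\varrho_n \Rightarrow \varrho$ in $\mathcal{M}_1(\Sigma)$. Lemma~\ref{cl:adjCont} gives $L^*_\phi \varrho_n \Rightarrow L^*_\phi \varrho$. Testing against $1 \in C_b(\Sigma)$ yields $\langle L^*_\phi \varrho_n,1\rangle = \langle \varrho_n, L_\phi 1\rangle \to \langle \varrho, L_\phi 1\rangle = \langle L^*_\phi \varrho,1\rangle$, and the pointwise bound $(L_\phi 1)(\overline x) = \int_S e^{\phi(\overline x z)}\mu({\rm d}z) \geq e^{-\|\phi\|_\infty}$ shows that $\langle L^*_\phi \varrho,1\rangle \geq e^{-\|\phi\|_\infty} > 0$ uniformly in $\varrho\in \mathcal{M}_1(\Sigma)$. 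Dividing these weakly convergent measures by their (converging, uniformly positive) masses preserves weak convergence, which is precisely the continuity of $\widetilde L^*_\phi$.

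For $\widetilde L^*_{\phi,m}$, the first step will be the purely combinatorial observation that whenever $\overline x \in \Sigma^{(m)}$ and $z\in S$, one has $\overline x z \in \Sigma^{(m)}$ if and only if $z \in S_{m+1}$, since the shift reindexes $x_{-n}$ to position $-(n+1)$ and $S_{m+n} \subset S_{m+n+1}$. Consequently, for $\varrho \in \mathcal{M}_1(\Sigma^{(m)})$ and $f \in C_b(\Sigma^{(m)})$,
\begin{equation*}
\langle L^*_{\phi,m}\varrho, f\rangle = \int_{\Sigma^{(m)}} g(\overline x)\,\varrho({\rm d}\overline x),\qquad g(\overline x) := \int_{S_{m+1}} f(\overline x z)\, e^{\phi(\overline x z)}\,\mu({\rm d}z).
\end{equation*}
Because the metric $\d$ on $\Sigma$ satisfies $\d(\overline x z,\overline y z) = \tfrac12\d(\overline x,\overline y)$, convergence $\overline x_k \to \overline x$ in $\Sigma^{(m)}$ yields $\overline x_k z \to \overline x z$ in $\Sigma^{(m)}$ for each fixed $z \in S_{m+1}$; continuity of $f$ and of $\phi$, together with dominated convergence controlled by $\|f\|_\infty e^{\|\phi\|_\infty}$, then give $g \in C_b(\Sigma^{(m)})$. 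Testing against such $g$ along $\varrho_n \Rightarrow \varrho$ establishes $L^*_{\phi,m}\varrho_n \Rightarrow L^*_{\phi,m}\varrho$.

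The normalization is handled by the uniform bound
\begin{equation*}
\langle L^*_{\phi,m}\varrho, 1\rangle = \int_{\Sigma^{(m)}} \int_{S_{m+1}} e^{\phi(\overline x z)}\,\mu({\rm d}z)\,\varrho({\rm d}\overline x) \geq \mu(S_{m+1})\,e^{-\|\phi\|_\infty} > 0,
\end{equation*}
valid for every $\varrho \in \mathcal{M}_1(\Sigma^{(m)})$ because $\mu(S_1) > 0$ and the $S_n$ are nested. Dividing the weakly convergent $L^*_{\phi,m}\varrho_n$ by these bounded-below and converging scalars finishes the proof. The only mild subtlety—and the one point where non-compactness of $S$ plays a role—is recognizing that truncating to $\Sigma^{(m)}$ forces the integration in $z$ to be restricted to the compact set $S_{m+1}$; everything else is a routine application of dominated convergence and the Feller property.
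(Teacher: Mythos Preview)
Your proof is correct and follows essentially the same approach as the paper: invoke the Feller property for the unnormalized operator, identify the normalizing constant as the integral against a bounded continuous function which is uniformly bounded below, and conclude that the quotient is continuous. Your identification of the integration domain as $S_{m+1}$ (rather than $S_m$, as the paper writes) is in fact the more precise statement, though the lower bound works either way since the $S_n$ are nested with $\mu(S_1)>0$.
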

We defer the proof of this and all subsequent claims to Section \ref{sec:claim}. Note that because $\Sigma^{(m)}$ is compact, ${\cal M}_1(\Sigma^{(m)})$ is a non-empty compact convex subset of the locally convex Hausdorff topological vector space ${\cal M}(\Sigma^{(m)})$ equipped with the weak topology. Therefore, we can apply the Schauder-Tychonoff fixed point  theorem (\cite[Theorem V.10.5]{DuSc-58}) to the continuous map
$\widetilde L^*_{\phi, m} : {\cal M}_1(\Sigma^{(m)}) \to {\cal M}_1(\Sigma^{(m)})$ in order to deduce that for each $m \in \N,$ there exists $\nu^{(m)} \in \mathcal{M}_1(\Sigma^{(m)})\subset \mathcal{M}_1(\Sigma)$ such that
\begin{equation} \label{eq:invMeas}
\widetilde L_{\phi,m}^{*}\nu^{(m)} = \nu^{(m)}.
\end{equation}
This family of fixed points is well-behaved in the following sense.
\begin{claim} \label{cl:tight}
The family of probability measures $(\nu^{(m)})_{m \in \N}$  is tight in $\mathcal{M}_1(\Sigma)$.
\end{claim}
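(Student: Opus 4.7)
The plan is to use a Prokhorov-style argument on the product space $\Sigma=S^{\Z_-}$. Since each compact $S_j\subset S$ satisfies $\mu(S\setminus S_j)\downarrow 0$, it suffices, for every $\varepsilon>0$, to produce compact sets $C_n\subset S$ (independent of $m$) such that
$\nu^{(m)}\big(\{\overline{x}\in\Sigma:\, x_{-n}\notin C_n\}\big)\le \varepsilon\,2^{-n}$ holds for all $m,n\in\N$. Then $K:=\prod_{n\ge 1}C_n$ is compact in $\Sigma$ by Tychonoff, and the union bound gives $\nu^{(m)}(K)\ge 1-\varepsilon$ uniformly in $m$.

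To control the marginal of $\nu^{(m)}$ at coordinate $-n$, I would exploit the fixed-point equation \eqref{eq:invMeas}. Setting $\lambda_m:=\langle L^*_{\phi,m}\nu^{(m)},1\rangle$, this rewrites as $L^*_{\phi,m}\nu^{(m)}=\lambda_m\nu^{(m)}$ and iterates to $(L^*_{\phi,m})^k\nu^{(m)}=\lambda_m^k\nu^{(m)}$ for every $k\in\N$. Unfolding \eqref{blSm} inductively, one checks that for every Borel $A\subset\Sigma$,
\begin{equation*}
\lambda_m^k\,\nu^{(m)}(A)=\int_{\Sigma^{(m)}}\nu^{(m)}(\d\overline{x})\int_{S_{m+1}^k}\ind{1}_{\overline{x}z_1\cdots z_k\in A}\prod_{i=1}^k e^{\phi(\overline{x}z_1\cdots z_i)}\,\mu^{\otimes k}(\d z_1\cdots\d z_k).
\end{equation*}
The crucial observation is that in $\overline{x}z_1\cdots z_k\in\Sigma$ the coordinate at position $-n$ (for $1\le n\le k$) equals $z_{k-n+1}$; taking $k=n$ exposes this coordinate as the freshly integrated variable $z_1\in S_{m+1}$.

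Applying the above with $A=\{\overline{y}:y_{-n}\notin C_n\}$ and $k=n$, the crude bound $\prod_{i=1}^n e^{\phi}\le e^{n\|\phi\|_\infty}$ together with $\mu(S_{m+1})\le 1$ yields
$\lambda_m^n\,\nu^{(m)}(x_{-n}\notin C_n)\le e^{n\|\phi\|_\infty}\mu(S\setminus C_n)$. For a matching lower bound on $\lambda_m^n$, boundedness of $\phi$ and $\mu(S_1)>0$ give $\lambda_m\ge e^{-\|\phi\|_\infty}\mu(S_{m+1})\ge e^{-\|\phi\|_\infty}\mu(S_1)=:c_0>0$ uniformly in $m$. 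Hence $\nu^{(m)}(x_{-n}\notin C_n)\le (c_0^{-1}e^{\|\phi\|_\infty})^n\,\mu(S\setminus C_n)$. Choosing $C_n:=S_{j(n)}$ with $j(n)$ large enough that $\mu(S\setminus S_{j(n)})\le \varepsilon\,2^{-n}(c_0\,e^{-\|\phi\|_\infty})^n$ then delivers the per-coordinate bound and completes the argument. The main obstacle is not any delicate analytic estimate but rather the careful coordinate bookkeeping: verifying that each iteration of $L^*_{\phi,m}$ constrains its fresh $\mu$-integration to the compact set $S_{m+1}$ (and not some larger $S_{m+j}$), and that the new symbol eventually occupying position $-n$ after $n$ iterations is precisely $z_1$.
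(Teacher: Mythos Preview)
Your argument is correct and follows essentially the same approach as the paper: both iterate the fixed-point relation $n$ (respectively $|k|$) times so that the coordinate at position $-n$ is a freshly $\mu$-integrated variable, then use $\|\phi\|_\infty<\infty$ and $\mu(S_1)>0$ to bound the marginal uniformly in $m$ by a constant times $\mu(S\setminus C_n)$. The only cosmetic difference is that the paper works with the normalized operator $\widetilde L^*_{\phi,m}$ and writes the bound as a single ratio (so the intermediate $\mu(S_{m+1})$ factors cancel, leaving $\mu(S_1)^{-1}$ rather than your $\mu(S_1)^{-n}$), and then phrases the conclusion as ``each coordinate marginal is tight, hence the family is tight by Tychonoff'' instead of explicitly building $K=\prod_n C_n$; neither difference affects the substance.
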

Combining Claim \ref{cl:tight} and Prokhorov's theorem, it follows that there exists a subsequence $(\nu^{(m_k)})_{k \in \N}$ such that $\nu^{(m_k)}\Rightarrow \nu$ for some $\nu \in \mathcal{M}_1(\Sigma).$ In order to show $\widetilde L^*_\phi \nu =\nu$, we take limits on both sides of \eqref{eq:invMeas} along the sequence $(m_k)_{k\in\N}$, and it remains to show that $\widetilde L_{\phi,m_k}^{*}\nu^{(m_k)} \Rightarrow \widetilde L^*_\phi \nu$. Since $\widetilde L_\phi^{*}\nu^{(m_k)} \Rightarrow \widetilde L^*_\phi \nu$ by the continuity of $\widetilde L_\phi^*$, it suffices to
compare $\widetilde L_{\phi,m_k}^{*}\nu^{(m_k)}$ with $\widetilde L_\phi^{*}\nu^{(m_k)}$ by showing that for any $f \in C_b(\Sigma)$, we have
\begin{equation} \label{eq:approxInv}
\big | \langle \widetilde L_{\phi,m_k}^{*}\nu^{(m_k)}, f\rangle - \langle \widetilde L_\phi^{*}\nu^{(m_k)} , f\rangle  \big | \to 0 \quad \text{as } k \to \infty.
\end{equation}
By the definitions of $\widetilde L_{\phi,m_k}^{*}\nu^{(m_k)}$
and $\widetilde L_\phi^{*}\nu^{(m_k)}$ in \eqref{eq:adjointDef} and \eqref{tildeL*}, the left-hand side of \eqref{eq:approxInv} equals
\begin{align} \label{eq:plugInDefsNormalAdj}
\begin{split}
\Big | \frac{\int_{\Sigma^{(m_k)}} f(\overline{y}) \, (L_\phi^* \nu^{(m_k)})({\rm d} \overline{y})}{(L^*_\phi \nu^{(m_k)})(\Sigma^{(m_k)})}  - \frac{\int f(\overline{y}) \, (L^*_\phi \nu^{(m_k)})({\rm d}\overline y)}{(L^*_\phi \nu^{(m_k)})(\Sigma)} \Big |.
\end{split}
\end{align}
Recalling the definition of $L_\phi^*$ from \eqref{eq:adjointDefI}, the fact that $\mu$ is a probability measure, and using that by assumption we have $\phi \in C_b(\Sigma)$, it is then easily seen that we have the uniform bounds
\begin{equation} \label{eq:massLUB}
e^{-\Vert \phi\Vert_\infty} \leq (L^*_\phi \nu^{(m_k)})(\Sigma) \leq e^{\Vert \phi\Vert_\infty}, \quad k \in \N.
\end{equation}
Furthermore, the difference of the numerators and the difference of the denominators in \eqref{eq:plugInDefsNormalAdj} are both upper bounded in absolute value by \begin{align*}
\max\{\Vert f\Vert_\infty, 1\}\cdot  (L_\phi^*\nu^{(m_k)})(\Sigma\backslash \Sigma^{(m_k)}) & \leq C \int_{\Sigma^{(m_k)}} \nu^{(m_k)}({\rm d}\overline{x}) \int_{S^{\mathsf c}_{m_k}} e^{\phi(\overline{x}z)} \mu( {\rm d}z) \\
& \leq C e^{\Vert \phi\Vert_\infty} \mu(S^{\mathsf c}_{m_k}) \to 0 \quad \mbox{ as }\ k\to\infty.
\end{align*}
Therefore, in combination with \eqref{eq:massLUB}, the difference in \eqref{eq:plugInDefsNormalAdj} converges to $0$ as $k\to\infty$. This concludes the proof that $\nu$ is a fixed point of $\widetilde L_\phi^*$, and hence $L_\phi^*\nu = \lambda \nu$ with $\lambda := (L_\phi^*\nu)(\Sigma) \in [e^{-\Vert \phi\Vert_\infty}, e^{\Vert \phi\Vert_\infty}].$
\end{proof}
\medskip

\begin{proof}[Proof of Theorem~\ref{thm:nuex}~\ref{item:harmonic}]
Let $\nu$ and $\lambda$ be as in Theorem~\ref{thm:nuex}~\ref{item:fixedPoint}, and for $\overline x ,\overline y \in \Sigma$ define
\begin{equation}\label{Bxy}
B(\overline x,\overline y) := \exp \Big\{\sum_{k=1}^{\infty}  \sup_{z_1, z_2, \ldots z_k \in S} \big | \phi(\overline{x} z_1 z_2 \ldots z_k) - \phi(\overline{y} z_1 z_2 \ldots z_k) \big | \Big\}.
\end{equation}
We observe that due to \eqref{eq:sumVar}  in Assumption \ref{item:sumVar}, we have
\begin{equation} \label{eq:supBfin}
    M_1:= \sup_{\overline x,\overline y \in \Sigma} B(\overline x,\overline y) \leq e^{\sum_{n=0}^\infty \var_n(\phi)}    <\infty.
\end{equation}
Still for $\nu$ as in Theorem~\ref{thm:nuex}~\ref{item:fixedPoint}, we furthermore define
\begin{equation} \label{eq:Lambda}
    \Lambda :=   \Big \{ f \in C_b(\Sigma) \,  | \, f >0, \langle \nu, f\rangle =1 , f( \overline x) \leq B( \overline x, \overline y) f( \overline y)\, \mbox{ for all }  \overline x,  \overline y \in \Sigma \Big\},
\end{equation}
and obtain the following auxiliary result.
\begin{claim} \label{cl:cbd} The set $\Lambda$ satisfies the following properties:
\begin{enumerate}[label=(\roman*)]
    \item \label{item1} Uniformly in $f\in \Lambda$, we have $\frac{1}{M_1}\leq \inf f\leq \sup f\leq M_1$;
    \item \label{item2} $\Lambda$ is uniformly equicontinuous, i.e., uniformly in $f\in \Lambda$, $|f(\overline{x})-f(\overline{y})| \leq \varpi(d(\overline{x}, \overline{y}))$ for some uniform modulus of continuity $\varpi$ with $\varpi(0) = \lim_{\delta\downarrow 0} \varpi(\delta)=0$;
    \item \label{item3} $\Lambda$ is convex and closed under pointwise convergence;
    \item \label{item4} $L_{\widetilde{\phi}} (\Lambda) \subset \Lambda$, where $\widetilde \phi:= \phi -\log \lambda$ so that $L_{\widetilde{\phi}}^*\nu =\nu$.
\end{enumerate}
\end{claim}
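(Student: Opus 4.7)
The plan is to verify the four properties in turn, making repeated use of the symmetry $B(\overline{x},\overline{y}) = B(\overline{y},\overline{x})$, the uniform upper bound $B \le M_1$ from \eqref{eq:supBfin}, and the fixed-point identity $L_{\widetilde{\phi}}^{*}\nu = \nu$. For \textbf{(i)}, I would integrate the defining inequality $f(\overline{x}) \le B(\overline{x},\overline{y}) f(\overline{y})$ against $\nu$ in one variable at a time. Using $\langle \nu, f\rangle = 1$ and $B \le M_1$, integrating against $\overline{y}$ yields $f(\overline{x}) \le M_1$, and integrating against $\overline{x}$ gives $1 \le M_1 f(\overline{y})$, i.e.\ $f \ge 1/M_1$.

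For \textbf{(ii)}, the key point is to make $\log B(\overline{x},\overline{y})$ small whenever $\d(\overline{x},\overline{y})$ is small, uniformly in $f\in\Lambda$. The decisive geometric observation is that appending the same suffix contracts the product metric by a factor $2^{-k}$, so that $\d(\overline{x}z_1\cdots z_k,\overline{y}z_1\cdots z_k) = 2^{-k}\d(\overline{x},\overline{y})$. Given $\varepsilon>0$, assumption \ref{item:sumVar} allows me to choose $K$ so large that $\sum_{k>K}\var_k(\phi)<\varepsilon/2$, and then assumption \ref{item:cont} gives $\delta>0$ with $|\phi(\overline{u})-\phi(\overline{v})|<\varepsilon/(2K)$ whenever $\d(\overline{u},\overline{v})\le\delta$. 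For $\d(\overline{x},\overline{y})\le\delta$ this bounds each of the first $K$ terms in the exponent of $B(\overline{x},\overline{y})$ by $\varepsilon/(2K)$, while the tail contributes at most $\varepsilon/2$, so $B(\overline{x},\overline{y})\le e^{\varepsilon}$. By symmetry $f(\overline{x})/f(\overline{y})\in[B^{-1},B]$, and then (i) gives $|f(\overline{x})-f(\overline{y})|\le M_1(e^{\varepsilon}-1)$ uniformly in $f\in\Lambda$, yielding a common modulus of continuity $\varpi$.

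For \textbf{(iii)}, convexity of $\Lambda$ is immediate from linearity of $\langle \nu,\cdot\rangle$ and of the inequality in \eqref{eq:Lambda}. For closure under pointwise convergence, the uniform lower bound $f_n\ge 1/M_1$ preserves strict positivity, dominated convergence (using $f_n\le M_1$) preserves $\langle\nu,f\rangle=1$, and the $B$-inequality passes to pointwise limits directly; continuity of the limit follows because the uniform modulus from (ii) is inherited by any pointwise limit. For \textbf{(iv)}, with $\widetilde\phi=\phi-\log\lambda$, positivity and continuity of $L_{\widetilde\phi}f$ are clear, and $\langle\nu,L_{\widetilde\phi}f\rangle=\langle L_{\widetilde\phi}^{*}\nu,f\rangle=1$ by the eigenmeasure property. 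The remaining nontrivial ingredient is the inequality $(L_{\widetilde\phi}f)(\overline{x})\le B(\overline{x},\overline{y})(L_{\widetilde\phi}f)(\overline{y})$, which I would deduce from the pointwise bound
\[
B(\overline{x}z,\overline{y}z)\,e^{|\phi(\overline{x}z)-\phi(\overline{y}z)|}\ \le\ B(\overline{x},\overline{y}),\qquad z\in S.
\]
This follows term by term: the $k$-th supremum in $\log B(\overline{x}z,\overline{y}z)$ is bounded by the $(k+1)$-th supremum in $\log B(\overline{x},\overline{y})$ because fixing the first appended symbol to be $z$ only restricts the sup, and the prefactor $|\phi(\overline{x}z)-\phi(\overline{y}z)|$ is absorbed into the $k=1$ term of $\log B(\overline{x},\overline{y})$. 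Multiplying $f(\overline{x}z)\le B(\overline{x}z,\overline{y}z)f(\overline{y}z)$ by $e^{\phi(\overline{x}z)}$, applying the displayed bound, and integrating against $\mu({\rm d}z)$ then gives the desired inequality after dividing by $\lambda$.

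The only genuinely delicate step is (ii): the quantitative combination of \ref{item:sumVar} and \ref{item:cont} into a single modulus of continuity valid on all of $\Sigma$, independent of $f\in\Lambda$, is where the non-compactness of $S$ would otherwise cause trouble; fortunately, the product-metric contraction $2^{-k}$ makes the argument independent of the size of $S$ and reduces everything to the scalar quantities $\var_k(\phi)$ and the single modulus of $\phi$.
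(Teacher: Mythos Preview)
Your proof is correct and follows essentially the same route as the paper. The only cosmetic differences are that for (i) you integrate the inequality $f(\overline{x})\le B(\overline{x},\overline{y})f(\overline{y})$ against $\nu$ in each variable, whereas the paper instead picks points $\overline{y}$ with $f(\overline{y})\le 1$ and $\overline{x}$ with $f(\overline{x})\ge 1$ (guaranteed by $\langle\nu,f\rangle=1$); and for (iv) you integrate a pointwise inequality in $z$ while the paper first passes to the ratio $(L_{\widetilde\phi}f)(\overline{x})/(L_{\widetilde\phi}f)(\overline{y})$ and bounds it by the supremum of the integrand ratio---both arguments rest on the same key estimate $e^{|\phi(\overline{x}z)-\phi(\overline{y}z)|}B(\overline{x}z,\overline{y}z)\le B(\overline{x},\overline{y})$, which you and the paper prove identically.
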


We now show that there exists $h \in \Lambda$ such that $L_{\widetilde{\phi}} h = h$, which we note is equivalent to $L_\phi h =\lambda h$. Because $\Sigma$ is not assumed to be compact and $\Lambda$ is not a compact subset of $C(\Sigma),$ as before we cannot apply standard fixed point theorems. Instead, we will take advantage of a Ces\`aro averaging argument that is often used to establish the existence of a stationary distribution for a Feller Markov process (see e.g.\ \cite[Chapter I, Proposition 1.8]{Li-05}). To this end, define for each $n \in \N$ the averaging operator
\begin{align} \label{eq:cesaro}
A_n f := \frac{1}{n} \sum_{m=1}^n L_{\widetilde{\phi}}^m f, \qquad f\in \Lambda.
\end{align}
We now choose and fix some $f\in \Lambda$, and note that by Claim \ref{cl:cbd} we have that $A_n f \in \Lambda$ and that $\Lambda$ is a bounded uniformly equicontinuous family.
We can therefore, apply the Arzel\`a-Ascoli theorem to find a subsequence $(A_{n_k}f)_{k\in\N}$ which converges to a function $h: \Sigma \to \R$ uniformly on each compact set $\Sigma^{(m)}\subset \Sigma$, $m\in\N$, defined in \eqref{Sigm}, and below which it was also noted that $\cup_m \Sigma^{(m)}$ is dense in $(\Sigma, d)$. Together with the uniform equicontinuity of $(A_{n_k}f)_{k\in\N}$, it is then easily seen that the uniform convergence of $(A_{n_k}f)_{k\in\N}$ to $h$ on each $\Sigma^{(m)}$ implies that $(A_{n_k}f)_{k\in\N}$ converges pointwise on $\Sigma$. Hence, by Claim \ref{cl:cbd} \ref{item3}, the limit $h$ also belongs to $\Lambda$, which in particular implies $\inf h>0$ by Claim \ref{cl:cbd} \ref{item1}. 

It only remains to show that $L_{\widetilde \phi}h= h$.
For this purpose, observe that for any $k \geq 1$,
 \begin{align}
L_{\widetilde{\phi}} (A_{n_k} f) =  A_{n_k} f +  \frac{1}{n_k}\big( L^{n_k+1}_{\widetilde{\phi}}f -L_{\widetilde{\phi}}f\big).
 \end{align}
 Now due to the pointwise convergence of $(A_{n_k}f)_{k\in\N}$ and Claim \ref{cl:cbd},
for each $\overline{x}\in \Sigma$ the right-hand side converges to $h(\overline{x})$ as $k\to\infty$. For the left-hand side, we compare with $(L_{\widetilde \phi}h)(\overline{x})$ and note that
\begin{align}\label{Ldiff}
& \big(L_{\widetilde{\phi}} (A_{n_k} f) \big)(\overline{x}) - (L_{\widetilde{\phi}} h)(\overline{x})
=  \int_S \frac{1}{\lambda} e^{\phi(\overline{x}z)}
\big((A_{n_k}f)(\overline{x}z)-h(\overline{x}z)\big) \, \mu({\rm d}z).
\end{align}
If $\overline{x} \in \Sigma^{(m)}$ for some $m\in\N$, then the expression in \eqref{Ldiff} is seen to converge to $0$ by splitting the integral over $z\in S$ into $z\in S_K$ and $z\in S_K^{\mathsf c}$ for some large $K \ge m$: due to the boundedness of the integrand, the contribution to the integral from integration over $z\in S_K^{\mathsf c}$ can be made arbitrarily small (uniformly in $n_k$) by choosing $K$ large, while the contribution from $z\in S_K$ tends to $0$ due to the fact that $\overline{x}z \in \Sigma^{(K)}$ and the uniform convergence of $A_{n_k}f\to h$ on $\Sigma^{(K)}$. For $\overline{x}\in \Sigma \backslash \cup_m \Sigma^{(m)}$, we can approximate $\overline{x}$ arbitrarily well by $\overline{x}^m \in \Sigma^{(m)}$ with $m$ large. In this case, due to the previous reasoning, the difference on the left-hand side of \eqref{Ldiff} is then seen to converge to $0$ if we replace $\overline{x}$ by $\overline{x}^m$. The error created by this replacement can be made arbitrarily small (uniformly in $n_k$) by choosing $m$ large and using the uniform equicontinuity of $\Lambda$ and the observation that $L_{\widetilde{\phi}} (A_{n_k} f)$ and $L_{\widetilde{\phi}} h$ are both elements of  $\Lambda$. This concludes the proof of $L_{\widetilde \phi}h=h$. This concludes the proof of Theorem~\ref{thm:nuex}~\ref{item:harmonic}.
\end{proof}
\bigskip

\begin{proof}[Proof of Theorem~\ref{thm:nuex}~\ref{item:sgConv}]
Let $\lambda$, $\nu$ and $h$ be as in parts \ref{item:fixedPoint}
and \ref{item:harmonic}. Following Remark \ref{R:Pih}, we can write
\begin{equation}\label{Pihconv}
\frac{(L^n_\phi f)(\overline x)}{\lambda^n h(\overline{x})}  = \Big(\Pi_h^n \Big(\frac{f}{h}\Big)\Big)(\overline{x}),
\end{equation}
where $\Pi_h: C_b(\Sigma) \to C_b(\Sigma)$ is the Markov operator
defined from the transition  probability kernel $\Pi_h(\overline{x}, {\rm d}\overline{y})$ in \eqref{eq:hTrans}, that is,
\begin{equation}\label{Pihpsi}
    (\Pi_h \psi)(\overline{x}) =
     \frac{1}{\lambda h(\overline{x})} \int_S \psi(\overline{x}z) h(\overline{x}z) e^{\phi(\overline{x}z)}  \mu({\rm d}z).
\end{equation}

As observed below \eqref{erg}, the convergence of the right-hand side of \eqref{Pihconv} to $\langle \nu, f\rangle=\langle \nu_h, \frac{f}{h}\rangle$ for all $f\in C_b(\Sigma)$ is then equivalent to the weak convergence of $\Pi_h^n(\overline{x}, \cdot)$ to $\nu_h$ for each $\overline x \in \Sigma$. Therefore, by the Portmanteau theorem, it suffices to consider bounded and uniformly continuous $\psi:=f/h\in C_{b,u}(\Sigma)$, which is equivalent to considering $f\in C_{b,u}(\Sigma)$ since we know from Theorem~\ref{thm:nuex}~\ref{item:harmonic} that $\inf h>0$ and $h\in C_{b,u}(\Sigma)$.

\begin{claim} \label{cl:uconlg} Let $\psi \in C_{b,u}(\Sigma).$
Then the family of functions $(\Pi_h^m \psi)_{m\in\N}$ is uniformly equicontinuous, i.e., for each $\varepsilon >0$, there exists $\delta >0$ such that for all $m\in\N$ and $\overline x, \overline y \in \Sigma$ with  $\d(\overline x, \overline y) \leq \delta$,
\begin{equation} \label{eq:i1b}
\big|(\Pi_h^m \psi)(\overline{x}) - (\Pi_h^m \psi)(\overline{y}) \big| \leq
\varepsilon.
\end{equation}
\end{claim}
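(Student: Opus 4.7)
The plan is to expand
$$(\Pi_h^m\psi)(\overline{x}) = \frac{1}{\lambda^m h(\overline{x})}\int_{S^m}\psi(\overline{x}z_1\cdots z_m)\, h(\overline{x}z_1\cdots z_m)\, \exp\!\Bigl(\sum_{i=1}^m\phi(\overline{x}z_1\cdots z_i)\Bigr)\,\mu({\rm d}z_1)\cdots\mu({\rm d}z_m)$$
and compare the integrands for $\overline{x}$ and $\overline{y}$ when $\d(\overline{x},\overline{y})\leq\delta$. First I would note that the product metric \eqref{eq:prodMet} satisfies $\d(\overline{x}z_1\cdots z_i,\overline{y}z_1\cdots z_i)=2^{-i}\d(\overline{x},\overline{y})$, so appending symbols on the right only brings sequences closer.

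The crucial technical step is to prove that, for every $\eta>0$, there exists $\delta>0$ such that
$$\sum_{i=1}^m\bigl|\phi(\overline{x}z_1\cdots z_i)-\phi(\overline{y}z_1\cdots z_i)\bigr|\;\leq\;\eta$$
uniformly in $m\in\N$ and $(z_1,\ldots,z_m)\in S^m$ whenever $\d(\overline{x},\overline{y})\leq\delta$. This is obtained by truncating at a level $N$: the tail $\sum_{i>N}{\rm var}_i(\phi)$ is made at most $\eta/2$ by the summable-variation assumption \ref{item:sumVar} (together with the pointwise bound $|\phi(\overline{x}z_1\cdots z_i)-\phi(\overline{y}z_1\cdots z_i)|\leq{\rm var}_i(\phi)$), while for $i\leq N$ each term is made at most $\eta/(2N)$ via the uniform continuity of $\phi$ assumed in \ref{item:cont}.

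Once this bound is in hand, the ratio of exponential factors $\exp(\sum_i[\phi(\overline{x}z_1\cdots z_i)-\phi(\overline{y}z_1\cdots z_i)])$ lies in $[e^{-\eta},e^\eta]$, the factor $h(\overline{x}z_1\cdots z_m)$ differs from $h(\overline{y}z_1\cdots z_m)$ by at most the uniform modulus of continuity for $\Lambda$ guaranteed by Claim~\ref{cl:cbd}\ref{item2}, and $\psi(\overline{x}z_1\cdots z_m)-\psi(\overline{y}z_1\cdots z_m)$ is controlled by the modulus of $\psi$. I would then decompose $(\Pi_h^m\psi)(\overline{x})-(\Pi_h^m\psi)(\overline{y})$ by adding and subtracting the mixed quantity obtained by replacing the integrand with its $\overline{y}$-version while keeping the normalization $1/(\lambda^m h(\overline{x}))$, and use the uniform bounds $1/M_1\leq h\leq M_1$ from Claim~\ref{cl:cbd}\ref{item1} to control the ratio $h(\overline{y})/h(\overline{x})$ and the difference $1/h(\overline{x})-1/h(\overline{y})$. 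Each of the two resulting contributions is then bounded by a quantity that vanishes as $\delta\to 0$, uniformly in $m$.

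The main obstacle is precisely the $m$-uniform control of the potential sum above: summable variation alone yields the constant $M$ of \eqref{eq:sumVar} without any smallness in $\delta$, whereas uniform continuity of $\phi$ alone produces a bound of order $m\,\omega_\phi(\delta)$ that explodes in $m$. Only the interplay of the two assumptions via the truncation argument delivers a bound that is simultaneously small and $m$-uniform, and this is the one place in the argument where both hypotheses \ref{item:sumVar} and \ref{item:cont} on $\phi$ enter together.
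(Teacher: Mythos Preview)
Your proposal is correct and follows essentially the same route as the paper: both arguments hinge on the truncation bound for $\sum_{i=1}^m|\phi(\overline{x}z_1\cdots z_i)-\phi(\overline{y}z_1\cdots z_i)|$ (tail via summable variation, head via uniform continuity), together with the uniform continuity and two-sided bounds on $h$ and the modulus of $\psi$. The paper packages the comparison through the ratio $G_m(\overline{x}z_1\cdots z_m)/G_m(\overline{y}z_1\cdots z_m)$ rather than your add-and-subtract decomposition, but this is only a bookkeeping difference.
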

Assuming this claim, then employing the same argument as below \eqref{eq:cesaro} in the proof of Theorem~\ref{thm:nuex}~\ref{item:harmonic} (with $A_nf$ replaced by $\Pi_h^n\psi$ here and noting that the boundedness of the family $(\Pi_h^n\psi)_{n \in \N}$ follows from the fact that $\Pi_h$ is a Markov operator), there exists a subsequence $(\Pi_h^{m_i}\psi)_{i\in\N}$ which converges pointwise on $\Sigma$ to some
$\psi_* \in C_{b,u}(\Sigma)$ with the same modulus of continuity as $(\Pi_h^m \psi)_{m\in\N}$, and the convergence is uniform on each of the compact sets $\Sigma^{(m)}$, $m\in\N$, defined in \eqref{Sigm}.

We will now prove that $\psi_*$ is constant and equals $\langle \nu_h, \psi\rangle = \langle \nu, h\psi\rangle$. Subsequently, we
will then show that the full sequence $(\Pi_h^m \psi)_{m\in\N}$ also converges to $\psi_*$, which together with \eqref{Pihconv}  implies Theorem~\ref{thm:nuex}~\ref{item:sgConv}.

We start by observing that for all $m\geq 1$ and
$\psi \in C_{b,u}(\Sigma),$
\begin{equation} \label{eq:infmon}
 \inf_{\overline x \in \Sigma}\psi(\overline x) \leq
 \inf_{\overline x \in \Sigma}(\Pi_h\psi)(\overline x)
 \cdots \leq \inf_{\overline x \in \Sigma}(\Pi^m_{h}\psi)  (\overline x) \leq \cdots  \leq  \inf_{\overline x \in \Sigma} \psi_*(\overline x),
\end{equation}
where all but the last inequality follow from the fact that $\Pi_h$ is a Markov operator (i.e., an integral operator with a probability kernel), while the last inequality follows from the pointwise convergence of $\Pi_h^{m_i}\psi$ to $\psi_*$ along the sequence $(m_i)_{i\in\N}$.

In order to show that $\psi_*$ is constant, the following claim is key.
\begin{claim}\label{cl:fstarinf}
 For all $m \geq 1,$
\begin{equation} \label{eq:fstarinf}
    \inf_{\overline x \in \Sigma}(\Pi^{m}_{h}\psi_*)(\overline x) = \inf_{\overline x \in \Sigma} \psi_*(\overline x).
\end{equation}
\end{claim}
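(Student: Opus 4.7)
The direction $\inf_{\overline x\in\Sigma}(\Pi_h^m\psi_*)(\overline x)\ge\inf_{\overline x\in\Sigma}\psi_*(\overline x)$ is immediate since $\Pi_h^m$ is a probability kernel, hence $\Pi_h^m\psi_*\ge\inf\psi_*$ pointwise. My plan for the reverse inequality is to construct near-minimizers of $\Pi_h^m\psi_*$ whose values approach $\inf\psi_*$, by exploiting the pointwise convergence $\Pi_h^{m_i}\psi\to\psi_*$ (established in the preceding paragraphs via an Arzel\`a-Ascoli/diagonal argument) together with the uniform equicontinuity of $(\Pi_h^n\psi)_n$ from Claim~\ref{cl:uconlg}. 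Set $\phi_0:=\Pi_h^m\psi$. By bounded convergence, $\Pi_h^{m_i}\phi_0=\Pi_h^{m+m_i}\psi\to\Pi_h^m\psi_*$ pointwise on $\Sigma$, and the family $(\Pi_h^n\phi_0)_n=(\Pi_h^{n+m}\psi)_n$ is a cofinal sub-family of $(\Pi_h^n\psi)_n$, inheriting its uniform equicontinuity.

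Write $c_n:=\inf_\Sigma\Pi_h^n\psi$. The monotonicity chain \eqref{eq:infmon} combined with the pointwise identity $\psi_*(\overline x)=\lim_i\Pi_h^{m_i}\psi(\overline x)\ge\liminf_i c_{m_i}$ (after taking $\inf_{\overline x}$) forces $c_n\uparrow\inf_\Sigma\psi_*$, so in particular $c_{m+m_i}\to\inf\psi_*$. For each $i$ I pick $\overline x_i\in\Sigma$ with $\Pi_h^{m+m_i}\psi(\overline x_i)<c_{m+m_i}+\tfrac1i$; by density of $\bigcup_k\Sigma^{(k)}$ in $\Sigma$ together with uniform equicontinuity, I replace $\overline x_i$ by a nearby $\overline x_i'\in\Sigma^{(k_i)}$ for some $k_i$, at the cost of an $o(1)$ error in function value, preserving $\Pi_h^{m+m_i}\psi(\overline x_i')\to\inf\psi_*$.

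The remaining step is to transfer this to a bound on $\Pi_h^m\psi_*(\overline x_i')$ via the identity
\[
  \Pi_h^m\psi_*(\overline x_i')-\Pi_h^{m+m_i}\psi(\overline x_i')=\int_\Sigma\bigl[\psi_*-\Pi_h^{m_i}\psi\bigr](\overline y)\,\Pi_h^m(\overline x_i',{\rm d}\overline y).
\]
I would estimate this integral by decomposing the probability kernel $\Pi_h^m(\overline x_i',\cdot)$ into a large-probability compact piece---namely extensions of $\overline x_i'$ by symbols from a compact $K\subset S$ with $\mu(K^{\mathsf c})<\varepsilon$, lying inside some $\Sigma^{(k')}$ on which the uniform convergence $\Pi_h^{m_i}\psi\to\psi_*$ is available---and a small-probability tail, controlled by the uniform boundedness of the integrand together with the $\mu^{\otimes m}$-bounded density of $\Pi_h^m(\overline x_i',\cdot)$ coming from $\|\phi\|_\infty<\infty$ and the bounds $0<\inf h\le\sup h<\infty$ (obtained in part~\ref{item:harmonic} via Claim~\ref{cl:cbd}).

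The main obstacle I anticipate is the potential dependence of $k_i$ (and hence $k'$) on $i$, which blocks a direct appeal to uniform convergence on a single fixed compact set. I would resolve this either via a diagonal argument balancing the exhaustion of $\Sigma$ by the $\Sigma^{(k)}$ against the rate $\Pi_h^{m_i}\psi\to\psi_*$ on each $\Sigma^{(k')}$, or, more cleanly, by confining near-minimizers of $\Pi_h^{m+m_i}\psi$ to a single $\Sigma^{(k_0)}$ with $k_0$ independent of $i$, which would follow from $\inf_{\Sigma^{(k_0)}}\Pi_h^n\psi-c_n\to 0$ uniformly in $n$---a uniformity I would try to extract from the common equicontinuity modulus of the entire family $(\Pi_h^n\psi)_n$.
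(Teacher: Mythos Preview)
Your overall skeleton is correct, and the reduction to controlling
\[
\int_\Sigma\bigl[\psi_*-\Pi_h^{m_i}\psi\bigr](\overline y)\,\Pi_h^m(\overline x_i',{\rm d}\overline y)
\]
is exactly the right object. The gap is precisely where you locate it, but neither of your proposed resolutions works.

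Resolution (b) would require that for each $\delta>0$ some $\Sigma^{(k_0)}$ is $\delta$-dense in $\Sigma$, since that is what lets the common modulus $\varpi$ push a near-minimizer into $\Sigma^{(k_0)}$. This fails in general: for any $k_0$, a point $\overline x$ with $x_{-1}\in S\setminus S_{k_0+1}$ is at $\d$-distance $\ge \tfrac12\min\{\d(x_{-1},S_{k_0+1}),1\}$ from $\Sigma^{(k_0)}$, and nothing prevents $x_{-1}\notin\bigcup_n S_n$. Resolution (a) requires a quantitative link between the index $k_i$ forced by the near-minimizer $\overline x_i$ and the rate at which $\sup_{\Sigma^{(k')}}|\psi_*-\Pi_h^{m_i}\psi|\to 0$, but the Arzel\`a--Ascoli construction gives no such control: $k_i$ can grow arbitrarily fast relative to the convergence speed on $\Sigma^{(k')}$.

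The paper bypasses this by two moves you are missing. First, since $m\mapsto\inf_\Sigma\Pi_h^m\psi_*$ is nondecreasing, it suffices to prove $\inf_\Sigma\Pi_h^{n_0}\psi_*\le\inf_\Sigma\psi_*+2\varepsilon$ for some \emph{large} $n_0\ge m$, rather than the given $m$. Second, once $n_0$ is large, the product metric gives $\d(\overline w\,z_1\cdots z_{n_0},\,\overline w'z_1\cdots z_{n_0})\le 2^{-n_0}$ for \emph{all} $\overline w,\overline w'\in\Sigma$. So in the integrand $|\psi_*(\overline w z)-\Pi_h^{m_L}\psi(\overline w z)|$ (but not in the density $G_{n_0}(\overline w z)$), one may replace the near-minimizer $\overline w$ by \emph{any} fixed $\overline w'\in\widetilde\Sigma^{(n_0)}$ at cost $2\varpi(2^{-n_0})$. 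This decouples the choice of compact set from the location of the near-minimizer entirely; one then finishes by splitting over $\overline w'z\in\widetilde\Sigma^{(n_0)}$ versus its complement, the latter having small mass by a suitable choice of the exhausting sequence $(\widetilde S_n)$.
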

We proceed as follows. To prove that $\psi_*$ is constant, i.e., $\psi_*\equiv \inf \psi_*$, we will show that
\begin{align} \label{eq:psi*const}
    \text{for all } \overline y \in \Sigma \text{
    and } \varepsilon >0,
    \text{ we have }\psi_*(\overline y) \le \inf \psi_* + 2\varepsilon.
\end{align}
For this purpose,  fix $\overline y = (y_i)_{i\leq -1}\in \Sigma$ and
$\varepsilon >0$. Since $\psi_* \in C_{b,u}(\Sigma),$ we can choose $\delta > 0$ such that  \begin{equation} \label{eq:psi*cont}
\text{for all $\overline x, \overline z \in \Sigma$ with $\d(\overline x, \overline z)< \delta$, we have }
    |\psi_*(\overline x) - \psi_*(\overline z)| < \varepsilon.
\end{equation}

Let $B(\overline{y}, \delta)$ denote the ball of radius $\delta$ centered at $\overline{y}\in \Sigma$. We first show that for $m\in\N$ sufficiently large, $\inf_{\overline{x}\in \Sigma}\Pi^m_h(\overline{x}, B(\overline{y}, \delta))>0$, which will be needed later on. The definition of $\Pi_h(\overline{x}, {\rm d}\overline{y})$ from \eqref{eq:hTrans} implies that
\begin{align}\label{pimh}
\begin{split}
    &\Pi^m_h(\overline{x}, B(\overline{y}, \delta))
    := \frac{1}{\lambda^m h(\overline{x})} \\
    &\quad \times\idotsint_{S^m} 1_{\{\overline{x}z_1\ldots z_m\in B(\overline{y}, \delta)\}} h(\overline{x}z_1\ldots z_m) \prod_{i=1}^m e^{\phi(\overline{x}z_1\ldots z_i)} \mu({\rm d}z_i).
\end{split}
\end{align}
Note that if $m$ is sufficiently large, then
for any $\overline x \in \Sigma$, we have
$\d((\overline x y_{-m} \ldots y_{-1}), \overline y) < \delta/2.$ To ensure $\overline{x}z_1\ldots z_m\in B(\overline{y}, \delta)$, we only need to require
$(z_1, \ldots, z_m)$ to be in a small neighbourhood of $(y_{-m}, \ldots, y_{-1})$. Using the fact that $h$ is bounded away from $0$ and $\infty$, $\inf \phi>-\infty$, and the assumption that $\mu$ has full support on $S$ (see \eqref{eq:locComp} and below), we infer using \eqref{pimh} that
\begin{equation*}
 \Cl[small]{const:lbPim}= \Cr{const:lbPim}(\overline{y}, m, \delta):= \inf_{\overline x \in \Sigma} \Pi_h^m(\overline x, B(\overline y, \delta)) > 0.
\end{equation*}
Claim \ref{cl:fstarinf} implies that there exists $\overline{x}_{\varepsilon} \in \Sigma$ such that
\begin{equation}\label{psi*barx}
 \inf_{\overline x \in \Sigma} \psi_*(\overline x)
 = \inf_{\overline x \in \Sigma}(\Pi^{m}_{h}\psi_*)(\overline x)
 \leq     (\Pi^{m}_{h}\psi_*)(\overline{x}_{ \varepsilon}) < \inf_{\overline x \in \Sigma} \psi_*(\overline{x}) + \Cr{const:lbPim} \varepsilon,
\end{equation}
and where we also used Claim \ref{cl:fstarinf} to obtain the equality.
Since $\Pi_h^m$ is a Markov operator with $(\Pi_h^m \psi_*)(\overline{x}_{\varepsilon}) = \int
\psi_*(\overline{z}) \, \Pi_h^m(\overline{x}_{\varepsilon}, {\rm d}\overline{z})$ and by definition $\Cr{const:lbPim}\leq 1$,  for the last inequality in \eqref{psi*barx} to hold, there must exist some $\overline{z} \in B(\overline{y}, \delta)$ with $\psi_*(\overline{z}) \leq \inf \psi_* + \varepsilon$.
Combined with the choice of $\delta$ in \eqref{eq:psi*cont}, it follows that
$$
\psi_*(\overline{y}) \leq \psi_*(\overline{z}) +\varepsilon \leq \inf \psi_* + 2 \varepsilon,
$$
which establishes \eqref{eq:psi*const} and hence the fact that $\psi_*$ is constant.

As noted in Remark \ref{R:Pih}, $\nu_h({\rm d}\overline{x})$ is an invariant measure for $\Pi_h$. Therefore, we have that
$$
\lim_{i\to\infty} \langle \nu_h, \Pi_h^{m_i}\psi\rangle = \langle \nu_h, \psi\rangle.
$$
On the other hand, the bounded convergence theorem implies that the limit on the left-hand side equals $\psi_*$, so we conclude that
\begin{equation}\label{eq:psi*def}
\psi_*=\langle \nu_h, \psi\rangle.
\end{equation}

Lastly, the convergence of $\Pi_h^m(\overline{x}) \psi \to \psi_*$ along the full sequence $m\in\N$ is formulated as Claim \ref{cl:lncfs} below and will be proved in the next subsection.
\end{proof}

\begin{claim} \label{cl:lncfs}
For each $\psi \in C_{b,u}(\Sigma)$, we have $\Pi_h^m \psi\to \psi_*$ with $\psi_*$ as in \eqref{eq:psi*def}.
The convergence holds pointwise, and also uniformly on each of the compact sets $\Sigma^{(m)}$, $m\in\N$, defined before \eqref{blSm}.
\end{claim}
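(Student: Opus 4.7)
The plan is to upgrade the subsequential convergence established in the proof of Theorem~\ref{thm:nuex}~\ref{item:sgConv} to convergence along the full sequence, by arguing that \emph{every} subsequence of $(\Pi_h^m \psi)_{m\in\N}$ has a further subsequence converging pointwise on $\Sigma$ to the same constant $\psi_* = \langle \nu_h, \psi\rangle$. Fix an arbitrary subsequence $(m_j)_{j\in\N}$. Since $\Pi_h$ is a Markov operator, $(\Pi_h^{m_j}\psi)_{j\in\N}$ is uniformly bounded by $\|\psi\|_\infty$, and by Claim~\ref{cl:uconlg} it is uniformly equicontinuous with a modulus that does not depend on $j$. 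Applying the Arzel\`a-Ascoli theorem on each compact set $\Sigma^{(\ell)}$, $\ell \in \N$, together with a diagonal extraction in $\ell$, we obtain a further subsequence $(m_{j_k})_{k\in\N}$ along which $\Pi_h^{m_{j_k}}\psi$ converges uniformly on every $\Sigma^{(\ell)}$. Since $\bigcup_\ell \Sigma^{(\ell)}$ is dense in $(\Sigma, \d)$ and the sequence is uniformly equicontinuous, this limit extends by continuity to a function $\widetilde \psi_* \in C_{b,u}(\Sigma)$, and the convergence is pointwise on all of $\Sigma$.

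Now the argument carried out on the specific subsequence $(m_i)_{i\in\N}$ earlier in the proof of Theorem~\ref{thm:nuex}~\ref{item:sgConv} applies verbatim to $(m_{j_k})_{k\in\N}$: the monotonicity \eqref{eq:infmon}, Claim~\ref{cl:fstarinf}, the covering argument using a small ball $B(\overline y, \delta)$ together with the positivity $\inf_{\overline x \in \Sigma} \Pi_h^m(\overline x, B(\overline y, \delta)) > 0$, and the invariance of $\nu_h$ under $\Pi_h$ (together with bounded convergence) force $\widetilde\psi_*$ to be the constant $\langle \nu_h, \psi \rangle = \psi_*$. Since every subsequence thus has a further subsequence converging pointwise to the same $\psi_*$, the full sequence $(\Pi_h^m \psi)_{m\in\N}$ converges pointwise to $\psi_*$ on $\Sigma$.

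It remains to upgrade this pointwise convergence to uniform convergence on each compact set $\Sigma^{(\ell)}$. Suppose for contradiction that uniform convergence fails on some $\Sigma^{(\ell)}$: then there exist $\varepsilon > 0$, indices $n_k \uparrow \infty$ and points $\overline x_k \in \Sigma^{(\ell)}$ with $|(\Pi_h^{n_k}\psi)(\overline x_k) - \psi_*| \geq \varepsilon$. By compactness of $\Sigma^{(\ell)}$, after passing to a subsequence we may assume $\overline x_k \to \overline x_\infty \in \Sigma^{(\ell)}$. The uniform equicontinuity of $(\Pi_h^m\psi)_{m\in\N}$ from Claim~\ref{cl:uconlg}, applied together with the already-established pointwise convergence $(\Pi_h^{n_k}\psi)(\overline x_\infty) \to \psi_*$, then contradicts the above inequality via the triangle bound
\[
|(\Pi_h^{n_k}\psi)(\overline x_k) - \psi_*| \leq |(\Pi_h^{n_k}\psi)(\overline x_k) - (\Pi_h^{n_k}\psi)(\overline x_\infty)| + |(\Pi_h^{n_k}\psi)(\overline x_\infty) - \psi_*|.
\]
The main obstacle is purely the first step, namely ensuring that the subsequential-limit argument survives along \emph{any} subsequence; the uniform equicontinuity from Claim~\ref{cl:uconlg} is doing essentially all the work, since without it neither diagonal extraction on the exhaustion $(\Sigma^{(\ell)})_{\ell \in \N}$ nor the contradiction argument for uniform convergence would go through.
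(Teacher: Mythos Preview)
Your argument is correct and takes a genuinely different route from the paper's own proof. The paper does not re-run the subsequential-limit machinery for every subsequence. Instead, having already fixed one subsequence $(m_i)$ along which $\Pi_h^{m_i}\psi\to\psi_*$ uniformly on each $\widetilde\Sigma^{(n)}$ and having established that $\psi_*$ is constant, it decomposes $\Pi_h^m\psi=\Pi_h^{\widetilde m}(\Pi_h^{m_L}\psi)$ for $m=\widetilde m+m_L$ with $\widetilde m\ge n_0$, and then approximates $\Pi_h^{m_L}\psi$ by the constant $\psi_*$ inside the integral representation of $\Pi_h^{\widetilde m}$. The resulting error is controlled by splitting the integral according to whether $\overline x z_{\widetilde m}\ldots z_1$ lies in $\widetilde\Sigma^{(n_0)}$ or not, using the summability condition \eqref{Smcond} off $\widetilde\Sigma^{(n_0)}$ and the uniform convergence along $(m_i)$ on $\widetilde\Sigma^{(n_0)}$.

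Your subsequence-principle approach is cleaner and more conceptual, but note that when you say ``Claim~\ref{cl:fstarinf} \ldots\ applies verbatim,'' you are implicitly using that its proof depends only on the subsequential limit being reached uniformly on each compact $\widetilde\Sigma^{(n)}$ (defined via the fast-growing $\widetilde S_n$ of \eqref{Smcond}), not on the specific sequence $(m_i)$. Your diagonal extraction is stated on the sets $\Sigma^{(\ell)}$, whereas $\widetilde\Sigma^{(n)}\supset\Sigma^{(n)}$ in general; this is harmless because pointwise convergence on $\Sigma$ of a uniformly equicontinuous family is automatically uniform on any compact subset, including each $\widetilde\Sigma^{(n)}$, but it is worth making this explicit. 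The paper's bootstrap argument avoids this issue at the cost of repeating the on/off $\widetilde\Sigma^{(n_0)}$ splitting from the proof of Claim~\ref{cl:fstarinf}; in exchange it yields a slightly more quantitative statement (an explicit $\varepsilon$ for all $m\ge n_0+m_L$), which is not needed here.
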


\begin{proof}[Proof of Theorem~\ref{thm:nuex}]
It remains to show that $h$ and $\nu$ are unique, which follow from Theorem~\ref{thm:nuex}~\ref{item:sgConv} by setting $f=1$, and the observation in Remark~\ref{R:Pih} that the convergence in Theorem~\ref{thm:nuex}~\ref{item:sgConv} is equivalent to the ergodicity of the Markov chain with transition kernel $\Pi_h$ and unique invariant measure $\nu_h$.
\end{proof}

\subsection{Proof of Claims \ref{L*continuity} to \ref{cl:lncfs}} \label{sec:claim}
In this subsection, we prove all the claims used in the proof of Theorem \ref{thm:nuex}. We start with Claim \ref{L*continuity} on the continuity of the normalized maps $\widetilde L^*_\phi: {\cal M}_1(\Sigma) \to {\cal M}_1(\Sigma)$ and $\widetilde L^*_{\phi, m}: {\cal M}_1(\Sigma^{(m)}) \to {\cal M}_1(\Sigma^{(m)})$.

\begin{proof}[Proof of Claim \ref{L*continuity}]
We recall the definition of $\widetilde L^*_\phi$ from \eqref{tildeL*} via
$$
    \widetilde L^*_\phi \varrho = \frac{L_\phi^*\varrho}{(L^*_\phi \varrho)(\Sigma)}, \qquad \varrho \in {\cal M}_1(\Sigma).
$$
If $\varrho_n \in {\cal M}_1(\Sigma)$ and $\varrho_n\Rightarrow \varrho$, then the Feller property of $L_\phi^*$ established in Lemma \ref{cl:adjCont} implies that $L_\phi^* \varrho_n \Rightarrow L_\phi^* \varrho$. Hence, it is sufficient to show that $(L^*_\phi \varrho_n)(\Sigma) \to (L^*_\phi \varrho)(\Sigma)>0$, which follows readily from the fact that
$$
(L_\phi^* \varrho_n)(\Sigma) = \int_{\Sigma} \varrho_n({\rm d}\overline{x}) \int_S \mu({\rm d}z) e^{\phi(\overline{x}z)} = \langle \varrho_n, \psi\rangle,
$$
where $\psi(\overline{x}) := \int_S \mu({\rm d}z) e^{\phi(\overline{x}z)}\in [e^{-\Vert \phi\Vert_\infty}, e^{\Vert \phi\Vert_\infty}]$ is clearly a bounded continuous function on $\Sigma$.

The continuity of $\widetilde L^*_{\phi, m}: {\cal M}_1(\Sigma^{(m)}) \to {\cal M}_1(\Sigma^{(m)})$ follows by the same argument once we note that
\begin{enumerate}
    \item
the analogue of Lemma \ref{cl:adjCont} also holds for $L^*_{\phi, m}: {\cal M}(\Sigma^{(m)}) \to {\cal M}(\Sigma^{(m)})$ since for $f\in C_b(\Sigma^{(m)})$,
we also have $(L_{\phi, m}f)(\overline{x}) := \int_{S_m} \mu({\rm d}z) f(\overline{x}z) e^{\phi(\overline{x}z)} \in C_b(\Sigma^{(m)})$, and
\item $\psi_m(\overline{x}) := \int_{S_m} \mu({\rm d}z) e^{\phi(\overline{x}z)}\in [e^{-\Vert \phi\Vert_\infty}\mu(S_1), e^{\Vert \phi\Vert_\infty}]$
is a bounded continuous function on $\Sigma^{(m)}$, and $\mu(S_1)>0$ by assumption.
\end{enumerate}
\end{proof}

Next we prove Claim \ref{cl:tight} on the tightness of the family $(\nu^{(m)})_{m\in \N}$ of eigenmeasures from (\ref{eq:invMeas}).
\begin{proof}[Proof of Claim \ref{cl:tight}]
For $k\in \Z_-$, let $\pi_k: \Sigma \to S$ denote the $k$-th coordinate projection map defined via $\pi_k: \Sigma \ni \overline x \mapsto x_k$. In order to prove the tightness of $(\nu^{(m)})_{m\in \N}$, taking advantage of Tychonoff's theorem it is sufficient to show that the marginal distributions of each coordinate, i.e., $(\nu^{(m)}\circ \pi_k^{-1})_{m\in \N}$, form a tight family in ${\cal M}_1(S),$ for each $k\in \Z_-$.

To prove the latter, we fix $k\in \Z_-$ and note that
\begin{equation} \label{eq:homAdj}
\widetilde L_{\phi, m}^* (c\varrho) = \widetilde L_{\phi, m}^* \varrho \qquad \mbox{for all } c>0 \mbox{ and }  \varrho \in {\cal M}(\Sigma^{(m)}).
\end{equation}
For any Borel set $B\subset S$, observing that $\nu^{(m)} = \widetilde L_{\phi, m}^* \nu^{(m)} = (\widetilde L_{\phi, m}^*)^{|k|} \nu^{(m)},$ we then have
\begin{align*}
\nu^{(m)}(\pi_k^{-1}(B))
&= ((\widetilde L_{\phi, m}^*)^{|k|} \nu^{(m)})(\pi_k^{-1}(B))\\
 &= \frac{\idotsint_{\Sigma^{(m)}\times S^{|k|}_m} 1_{\{z_k \in B\}} \nu^{(m)}({\rm d}\overline{x})  \prod_{i=k}^{-1} e^{\phi(\overline{x}z_k\ldots z_{i})} \mu({\rm d}z_i)}{\idotsint_{\Sigma^{(m)}\times S^{|k|}_m}  \nu^{(m)}({\rm d}\overline{x})  \prod_{i=k}^{-1} e^{\phi(\overline{x}z_k\ldots z_{i})} \mu({\rm d}z_i)} \\
& \leq e^{2|k| \cdot \Vert \phi\Vert_\infty} \frac{\mu(B)}{\mu(S_m)} \leq \frac{e^{2|k| \cdot \Vert \phi\Vert_\infty}}{\mu(S_1)} \, \mu(B),
\end{align*}
where we took advantage of  \eqref{eq:homAdj} in the second equality.
Since the factor in front of $\mu(B)$ on the right-hand side  is independent of $m,$ it then follows that $(\nu^{(m)}\circ \pi_k^{-1})_{m\in \N}$ is a tight family in ${\cal M}_1(S)$.
\end{proof}

Next we prove Claim \ref{cl:cbd}, which establishes a list of properties for the set $\Lambda\subset C_b(\Sigma)$ defined in \eqref{eq:Lambda}.
\begin{proof}[Proof of Claim \ref{cl:cbd}]
In order to establish \ref{item1}, we observe that if $f \in \Lambda$, then $f>0$ and $\langle \nu, f\rangle=1$ imply that there exists $\overline{y} \in \Sigma$ with $f(\overline{y}) \in (0,1]$, and hence $f(\overline{x}) \leq B(\overline{x}, \overline{y}) f(\overline{y}) \leq M_1$ for all $\overline{x}\in\Sigma$, where $M_1$ was defined in \eqref{eq:supBfin}. Similarly, there exists $\overline{x} \in \Sigma$ with $f(\overline{x}) \geq 1$, and hence $f(\overline{y})\geq 1/M_1$ for all $\overline{y}\in\Sigma$.

For proving \ref{item2}, we note that if $f\in \Lambda$, then since $B$ is symmetric we have that
$$
|\log f(\overline{x}) - \log f(\overline{y})| \leq \log B(\overline{x}, \overline{y}).
$$
It suffices to show that $\lim_{\d(\overline{x}, \overline{y})\downarrow 0} \log B(\overline{x}, \overline{y})=0$, which implies the uniform continuity of $\log f$, uniform in $f\in \Lambda$. Together with \ref{item1}, this then implies \ref{item2}. To prove
$\log B(\overline{x}, \overline{y})\to 0$ as $\d(\overline{x}, \overline{y})\downarrow 0$, we start with choosing $\varepsilon>0$ arbitrary. Recalling $\var_k(\phi)$ from \eqref{eq:varDef} and letting $\varpi_\phi(\delta):=\sup_{\d(\overline{x}, \overline{y})\leq \delta} |\phi(\overline{x})-\phi(\overline{y})|$ denote the uniform modulus of continuity of $\phi$, we can then upper bound for any $k_0 \in \N$:
\begin{align*}
\log B(\overline x,\overline y) & = \sum_{k=1}^{\infty}  \sup_{z_1, z_2, \ldots z_k \in S} \big | \phi(\overline{x} z_1 z_2 \ldots z_k) - \phi(\overline{y} z_1 z_2 \ldots z_k) \big | \\
& \leq \sum_{k=k_0+1}^\infty \var_k(\phi) + \sum_{k=1}^{k_0}  \sup_{z_1, z_2, \ldots z_k \in S} \big | \phi(\overline{x} z_1 z_2 \ldots z_k) - \phi(\overline{y} z_1 z_2 \ldots z_k) \big | \\
& \leq  \sum_{k=k_0+1}^\infty \var_k(\phi) + k_0 \varpi_\phi(d(\overline{x}, \overline{y})).
\end{align*}
Since $\phi$ has summable variation and is uniformly continuous by
assumptions \ref{item:sumVar} and \ref{item:cont}, we can first choose $k_0$ sufficiently large and then choose $\delta>0$ sufficiently small such that uniformly in $\overline{x}, \overline{y}\in \Sigma$ with $\d(\overline{x}, \overline{y})\leq \delta$, we have $\log B(\overline{x}, \overline{y})\leq \varepsilon$. This finishes the proof of \ref{item2}.

Regarding \ref{item3}, the convexity of $\Lambda$ immediately follows from its definition. The claim that $\Lambda$ is closed under pointwise convergence also follows from the definition and the uniform bounds obtained in \ref{item1}.

For \ref{item4}, we proceed as in \cite{Wa-75}. Fix $\overline x, \overline y \in \Sigma$ and $f \in \Lambda$. First note that
\begin{align*}
\frac{(L_{\widetilde{\phi}}f)(\overline x)}{(L_{\widetilde{\phi}}f)(\overline y)} &= \frac{\int_S   f(\overline{x}z) e^{\widetilde \phi(\overline{x}z)} \mu({\rm d}z)}{\int_S  f(\overline{y}z)  e^{\widetilde \phi(\overline{y}z)} \mu({\rm d}z)} \\
      &\leq \sup_{z \in S} \frac{f(\overline{x}z) e^{\widetilde{\phi}(\overline{x} z)} }
      { f(\overline{y}z) e^{\widetilde{\phi}(\overline{y}z)}}
      \leq \sup_{z \in S} \big(e^{\widetilde{\phi}(\overline{x} z) - \widetilde{\phi}(\overline{y}z)} B(\overline{x}z, \overline{y}z) \big),
\end{align*}
with $B$ as in \eqref{Bxy}. The term on the right-hand side can then be upper bounded by
\begin{eqnarray}\label{est:B}
 \lefteqn{e^{\widetilde{\phi}(\overline{x}z) - \widetilde{\phi}(\overline{y}z)} B(\overline{x}z, \overline{y} z)}\nonumber\\
   &=&   e^{\widetilde{\phi}(\overline{x}z) - \widetilde{\phi}(\overline{y}z)} \exp\Big\{\sum_{k=1}^{\infty} \sup_{z_1, z_2, \ldots z_k \in S} \big| \widetilde{\phi}(\overline{x}z z_1\ldots z_k) - \widetilde{\phi}(\overline{y}z z_1\ldots z_k) \big| \Big\}\nonumber\\
  &\leq& \exp\Big\{\sum_{k=1}^{\infty} \sup_{w_1, w_2, \ldots w_k \in S} \big| \widetilde{\phi}(\overline{x}w_1\ldots w_k) - \widetilde{\phi}(\overline{y}w_1\ldots w_k)\big| \Big\} = B(\overline{x}, \overline{y}),
\end{eqnarray}
which establishes the inequality
\begin{equation} \label{est:bxylp}
 (L_{\widetilde{\phi}}f)(\overline x) \leq B(\overline x,\overline y)  (L_{\widetilde{\phi}}f)(\overline y).
\end{equation}
Since $f\in \Lambda$ and $\inf f>0$ by \ref{item1}, we also infer that $ L_{\widetilde{\phi}}f >0$. The fact that $ L_{\widetilde{\phi}}^*\nu =  \nu$ implies that $\langle \nu,  L_{\widetilde{\phi}}f\rangle = \langle L_{\widetilde{\phi}}^*\nu, f\rangle = \langle \nu, f\rangle =1$. Altogether, we therefore, deduce $L_{\widetilde{\phi}}f\in \Lambda$ and, since $f \in \Lambda$ was chosen arbitrarily,
$L_{\widetilde{\phi}} (\Lambda) \subset \Lambda$ also.
\end{proof}

Next we prove Claim \ref{cl:uconlg} on the uniform
equicontinuity of the family $(\Pi_h^m \psi)_{m\in\N},$ for any $\psi\in C_{b, u}(\Sigma)$.
\begin{proof}[Proof of Claim \ref{cl:uconlg}]
Let $\psi \in C_{b,u}(\Sigma)$. For any $m\in\N$ and $\overline x, \overline y \in \Sigma,$ we can rewrite (cf.~\eqref{pimh})
\begin{align*}
&(\Pi_h^m \psi)(\overline x) - (\Pi_h^m \psi)(\overline y)\\
&\quad = \int_{S^m} \Big( \psi(\overline{x}z_1\ldots z_m) G_m(\overline{x}z_1\ldots z_m) - \psi(\overline{y}z_1\ldots z_m) G_m(\overline{y}z_1\ldots z_m) \Big)
 \prod_{i=1}^m \mu({\rm d}z_i),
\end{align*}
where $G_m(\overline{x}z_1\ldots z_m)\prod_{i=1}^m \mu({\rm d}z_i)$ is just the probability measure $\Pi^m_h(\overline{x}, \cdot)$, with
\begin{equation}\label{Gmbarx}
G_m(\overline{x}z_1\ldots z_m) :=  \frac{h(\overline{x}z_1\ldots z_m)}{\lambda^m h(\overline{x})} e^{\sum_{i=1}^m \phi(\overline{x}z_1\ldots z_i)}.
\end{equation}
As a consequence, we can upper bound
\begin{eqnarray}\label{eqna:i1i2}
&&  \big |(\Pi_h^m \psi)(\overline x) - (\Pi_h^m \psi)(\overline y)\big | \nonumber \\
  &\leq &\int_{S^m} \big | G_m(\overline{x} z_1 \ldots z_m) -   G_m(\overline{y} z_1 \ldots z_m)\big | \,\big | \psi(\overline{y} z_1 \ldots z_m) \big | \prod_{i=1}^m \mu({\rm d}z_i)  \nonumber \\
  &&+ \int_{S^m} \big | \psi(\overline{x} z_1 \ldots z_m) -  \psi(\overline{y} z_1 \ldots z_m) \big | G_m(\overline{x} z_1 \ldots z_m)\prod_{i=1}^m \mu({\rm d}z_i) \nonumber \\
  &\leq& \Vert\psi\Vert_\infty \idotsint_{S^m}  \Big | \frac{G_m(\overline{x} z_1 \ldots z_m)}{G_m(\overline{y}z_1 \ldots z_m)} -1 \Big|\,  G_m(\overline{y}z_1 \ldots z_m) \prod_{i=1}^m \mu({\rm d}z_i) \nonumber \\
  && +  \sup_{z_1, \ldots, z_m \in S } |\psi(\overline{x} z_1 \ldots z_m) -  \psi(\overline{y} z_1 \ldots z_m)|.
\end{eqnarray}
The supremum on the right-hand side can be controlled via the uniform modulus of continuity of $\psi$ given by
$$
\varpi_\psi(\delta):= \sup_{\d(\overline{x}, \overline{y})\leq \delta} |\psi(\overline{x}) - \psi(\overline{y})|
$$
since $\d(\overline{x}z_1\ldots z_m, \overline{y}z_1 \ldots z_m) < \d(\overline{x}, \overline{y})$ by the definition of $\d(\cdot, \cdot)$ in \eqref{eq:prodMet}. To  bound the first term in \eqref{eqna:i1i2}, since $G_m(\overline{y}z_1 \ldots z_m) \prod_{i=1}^m \mu({\rm d}z_i)$ is a probability kernel, it suffices to give a uniform bound on
\begin{equation} \label{eq:GQuotDist}
\Big| \frac{G_m(\overline{x} z_1 \ldots z_m)}{G_m(\overline{y} z_1 \ldots z_m)} -1 \Big| = \Big |\frac{h(\overline{y})}{h(\overline{x})}\cdot \frac{h(\overline{x}z_1\ldots z_m)}{h(\overline{y}z_1\ldots z_m)}\cdot e^{\sum_{i=1}^m (\phi(\overline{x}z_1\ldots z_i) - \phi(\overline{y}z_1\ldots z_i))} - 1 \Big |.
\end{equation}
We now note that Claim \ref{cl:uconlg} is only used in the proof of Theorem~\ref{thm:nuex}~\ref{item:sgConv}, and that we had already proved in Theorem~\ref{thm:nuex}~\ref{item:harmonic} that $h$ is uniformly continuous and bounded away from $0$ and $\infty$. As a consequence, the two quotients of $h(\cdot)$ above can be made arbitrarily close to $1$ if $\d(\overline{x}, \overline{y})$ is sufficiently small. The same is true for the exponential term since we can bound
\begin{align*}
\Big|\sum_{i=1}^m (\phi(\overline{x}z_1\ldots z_i) - \phi(\overline{y}z_1\ldots z_i))\Big|
& \leq \sum_{i=1}^\infty |\phi(\overline{x}z_1\ldots z_i) - \phi(\overline{y}z_1\ldots z_i)| \\
& \leq \sum_{i=1}^M |\phi(\overline{x}z_1\ldots z_i) - \phi(\overline{y}z_1\ldots z_i)| + \!\!\!\!\! \sum_{i=M+1}^\infty {\rm var}_i (\phi),
\end{align*}
which can be made arbitrarily small by first choosing $M$ large and using the assumption that $\phi$ has summable variation, and then letting $\d(\overline{x}, \overline{y})$ be sufficiently small and using the uniform continuity of $\phi$. Noting that the bounds are uniform in $m\in\N$, this concludes the proof of Claim \ref{cl:uconlg}.
\end{proof}

Next we prove Claim \ref{cl:fstarinf}, which was critical in establishing that $\psi_*$ was constant in the proof of Theorem~\ref{thm:nuex}~\ref{item:sgConv}.

\begin{proof}[Proof of Claim \ref{cl:fstarinf}]
Fix $m\in\N$. In this proof, we will consider a subsequence of $(S_n)_{n\in\N}$, renamed as $(\widetilde S_n)_{n\in\N}$, such that
\begin{equation}\label{Smcond}
\sum_{n=1}^\infty e^{(\Vert\phi\Vert_\infty- \log\lambda)^+ n} \mu(\widetilde S_n^{\mathsf c})<\infty.
\end{equation}
We then define $\widetilde \Sigma^{(m)}$ from $(\widetilde S_n)_{n\in\N}$ the same way as we define $\Sigma^{(m)}$ from $(S_n)_{n\in\N}$ in \eqref{Sigm}. For $\psi\in C_{b,u}(\Sigma)$, analogous to the reasoning below \eqref{eq:i1b}, by going into a further subsequence if necessary, we can also assume that $(\Pi_h^{m_i}\psi)_{i\in\N}$ converges pointwise on $\Sigma$ to $\psi_* \in C_{b,u}(\Sigma)$ with the same modulus of continuity as $(\Pi_h^m \psi)_{m\in\N}$, and the convergence is uniform on each of the compact sets $(\widetilde \Sigma^{(m)})_{m\in\N}$ defined above.

First note that
\begin{equation}\label{infPih}
\inf_{\overline{x}\in \Sigma} \Pi_h^m \psi_* (\overline{x}) \geq \inf_{\overline{x} \in \Sigma} \psi_*(\overline{x})
\end{equation}
because $\Pi_h$ is a Markov operator, and recall that
\begin{equation}\label{infPmh}
\Pi_h^m\psi_* (\overline{x}) = \idotsint_{S^m} \psi_*(\overline{x}z_1\ldots z_m) G_m(\overline{x}z_1\ldots z_m) \prod_{i=1}^m \mu({\rm d}z_i)
\end{equation}
where $G_m(\overline{x}z_1\ldots z_m) :=  \frac{h(\overline{x}z_1\ldots z_m)}{\lambda^m h(\overline{x})} e^{\sum_{i=1}^m \phi(\overline{x}z_1\ldots z_i)}$ was defined in \eqref{Gmbarx}.

To prove the reverse inequality to \eqref{infPih}, we will show that for any $\varepsilon>0$,
\begin{equation}\label{2epspsi}
\inf_{\overline{x}\in \Sigma} \Pi^m_h\psi_*(\overline{x}) \leq \inf_{\overline{x}\in \Sigma} \psi_*(\overline{x})+ 2\varepsilon.
\end{equation}
Since $\inf\Pi^m_h\psi_*$ is non-decreasing in $m$, it suffices to prove \eqref{2epspsi} with $m$ replaced by some $n_0\geq m$, to be chosen later on. The basic idea then is to approximate $\psi_*$ in \eqref{infPmh} by $\Pi_h^{m_L}\psi$ for some large $L$. Since by \eqref{eq:infmon} we have
$$
\inf_{\overline{x}} \Pi_h^{n_0} \Pi_h^{m_L} \psi(\overline{x}) = \inf_{\overline{x}} \Pi_h^{n_0+m_L} \psi (\overline{x}) \leq \inf_{\overline{x}} \psi_* (\overline{x}),
$$
it only remains to show that for some suitable $n_0\geq m$ and $L$,
\begin{equation}\label{Pin0}
\inf_{\overline{x}\in \Sigma} \Pi_h^{n_0} \psi_*(\overline{x}) \leq \inf_{\overline{x}\in \Sigma} \Pi_h^{n_0} \Pi_h^{m_L}\psi(\overline{x}) + 2\varepsilon.
\end{equation}
To see how to choose $n_0$ and $L$, note that given $n_0+m_L$, we can find $\overline{w}\in \Sigma$ such that
$$
\Pi_h^{n_0+m_L} \psi(\overline{w}) \leq \inf_{\overline{x}\in \Sigma} \Pi_h^{n_0} \Pi_h^{m_L}\psi(\overline{x}) + \varepsilon.
$$
As a consequence, in order to prove \eqref{Pin0}, it is then sufficient to bound the difference
\begin{align}
& \big|\Pi_h^{n_0} \psi_*(\overline{w}) - \Pi_h^{n_0} (\Pi_h^{m_L}\psi) (\overline{w})\big| \nonumber \\
\leq \, & \idotsint_{S^{n_0}} \big| \psi_*(\overline{w}z_1\ldots z_{n_0}) - \Pi_h^{m_L}\psi (\overline{w}z_1\ldots z_{n_0})\big|
G_{n_0}(\overline{w}z_1\ldots z_{n_0}) \prod_{i=1}^{n_0} \mu({\rm d}z_i) \label{Pin0K}
\end{align}
from above by $\varepsilon$.
We recall the compact set $\widetilde \Sigma^{(n_0)}=\{(\ldots x_{-2} x_{-1}): x_{-i} \in \widetilde S_{n_0+i} \mbox{ for all } i\in\N\}$. If $\overline{w}\notin \widetilde \Sigma^{(n_0)}$, then in $\psi_*(\overline{w}z_1\ldots z_{n_0})$ and $\psi(\overline{w}z_1\ldots z_{n_0})$ above, we first replace $\overline{w}$ by any point $\overline{w}'\in \widetilde\Sigma^{(n_0)}$. The integral in \eqref{Pin0K} can then be upper bounded by
\begin{equation}\label{varpiPi}
     2\varpi(2^{-n_0})
    + \idotsint_{S^{n_0}} \big| \psi_*(\overline{w}'z_1\ldots z_{n_0}) - \Pi_h^{m_L}\psi (\overline{w}'z_1\ldots z_{n_0})\big|
G_{n_0}(\overline{w}z_1\ldots z_{n_0}) \prod_{i=1}^{n_0} \mu({\rm d}z_i),
\end{equation}
where $\varpi(\cdot)$ denotes a uniform modulus of continuity for $(\Pi_h^{m_i}\psi)_{i\in\N}$ and $\psi_*$, and we used the fact that $\d(\overline{w}z_1\ldots z_{n_0}, \overline{w}'z_1\ldots z_{n_0})\leq 2^{-n_0}$ by the definition of $\d$ in \eqref{eq:prodMet}. The first term in \eqref{varpiPi} can then be made smaller than $\varepsilon/3$ by choosing $n_0$ large.

For the second term in \eqref{varpiPi}, if we restrict the integral  to $\overline{w}'z_1\ldots z_{n_0}\in \widetilde\Sigma^{(n_0)}$, then---using again that $G_{n_0}(\overline{w}z_1\ldots z_{n_0}) \prod_{i=1}^{n_0} \mu({\rm d}z_i)$ is a probability kernel---the contribution can be bounded from above by
$$
\sup_{\overline{x} \in \widetilde\Sigma^{(n_0)}} \big|\psi_*(\overline{x}) - \Pi_h^{m_L}\psi(\overline{x})\big|,
$$
which, given $n_0$, can be made smaller than $\varepsilon/3$ by choosing $L$ sufficiently large since $\Pi_h^{m_i}\psi \to \psi_*$ uniformly on $\widetilde \Sigma^{(n_0)}$.

If we relabel $z_1\ldots z_{n_0}$ as $z_{n_0}\ldots z_1$ and restrict the integral in \eqref{varpiPi} to $\overline{w}'z_{n_0}\ldots z_1 \in (\widetilde\Sigma^{(n_0)})^{\mathsf c}$, then since  $\overline{w}'\in \widetilde\Sigma^{(n_0)}$, we must have $z_i\not\in \widetilde S_{n_0+i}$ for some $1\leq i\leq n_0$. The restricted integral can therefore, be bounded from above by
\begin{align} \label{2psiinf}
& 2 \Vert \psi\Vert_\infty \sum_{i=1}^{n_0}
\idotsint_{S^{n_0}} 1_{\{z_i\notin \widetilde S_{n_0+i} \}}
\frac{h(\overline{w}z_{n_0}\ldots z_1)}{\lambda^{n_0} h(\overline{w})} e^{\sum_{i=1}^{n_0} \phi(\overline{w}z_{n_0}\ldots z_{n_0-i+1})} \prod_{i=1}^{n_0} \mu({\rm d}z_i) \\
\leq\ &  2 \Vert \psi\Vert_\infty \frac{\sup h}{\inf h}\sum_{i=1}^{n_0} e^{(\Vert \phi\Vert_\infty -\log \lambda)^+n_0} \mu(\widetilde S_{n_0+i}^{\mathsf c}) =
2 \Vert \psi\Vert_\infty \frac{\sup h}{\inf h}\sum_{n=n_0+1}^{2n_0} e^{(\Vert \phi\Vert_\infty -\log \lambda)^+n} \mu(\widetilde S_n^{\mathsf c}), \nonumber
\end{align}
which can be made smaller than $\varepsilon/3$ by choosing $n_0$ sufficiently large, using the assumption \eqref{Smcond} and the fact that $h$ is bounded away from $0$ and $\infty$.

Collecting the estimates above, we have shown that by first choosing $n_0$ sufficiently large and then $L$ large, the right-hand side of \eqref{Pin0K} can be bounded from above by $\varepsilon$, which implies \eqref{Pin0} and concludes the proof of Claim \ref{cl:fstarinf}.
\end{proof}

\begin{proof}[Proof of Claim \ref{cl:lncfs}]
Recall the definition of $(\widetilde S_n)_{n\in\N}$ and $(\widetilde \Sigma^{(m)})_{m\in\N}$ from \eqref{Smcond} and the line that follows. Suppose that the subsequence $(\Pi_h^{m_i} \psi)_{i\in\N}$ converges uniformly to $\psi_*$ on each $\widetilde \Sigma^{(n)}$, where $\psi_*$ has been shown to be a constant using Claim \ref{cl:fstarinf}. We need to strengthen it to show uniform convergence to $\psi_*$ on each $\widetilde \Sigma^{(n)}$ along the full sequence $(\Pi_h^m \psi)_{m\in\N}$, which then implies uniform convergence on each $\Sigma^{(m)}$ (since $\Sigma^{(m)} \subset \widetilde \Sigma^{(m)}$) and pointwise convergence on $\Sigma$ (thanks to the uniform equicontinuity of $(\Pi_h^m \psi)_{m\in\N}$ and the fact that $\cup \widetilde \Sigma^{(m)}$ is dense in $\Sigma$).

Fix $n\in\N$ and $\varepsilon>0$. We will show that $\sup_{\overline{x}\in \widetilde \Sigma^{(n)}} |\Pi_h^m\psi (\overline{x}) -\psi^*| \leq \varepsilon$ for all $m$ sufficiently large. The argument is similar to the proof of Claim \ref{cl:fstarinf}, except now we approximate $\Pi^{m_L}\psi$ by $\psi_*$ instead of the other way round.

To start with, thanks to \eqref{Smcond}, we can choose $n_0\geq n$ sufficiently large such that
$$
2 \Vert \psi\Vert_\infty \frac{\sup h}{\inf h}\sum_{i=n_0+1}^\infty e^{(\Vert \phi\Vert_\infty -\log \lambda)^+i} \mu(\widetilde S_i^{\mathsf c}) \leq \varepsilon/2.
$$
Next, we choose $m_L$ from the sequence $(m_i)_{i\in\N}$ such that
$$
\sup_{\overline{x}\in \widetilde \Sigma^{(n_0)}} |\Pi_h^{m_L}\psi (\overline{x}) -\psi^*| \leq \varepsilon/2.
$$
For all $m= \widetilde m+ m_L$ with $\widetilde m\geq n_0$ and $\overline{x} \in \widetilde \Sigma^{(n)} \subset \widetilde \Sigma^{(n_0)}$, we can therefore bound
\begin{align*}
\big|\Pi_h^m \psi(\overline{x}) - \psi_*(\overline{x})\big|
& = \big|\Pi_h^{\widetilde m}\Pi_h^{m_L}\psi(\overline{x}) - \Pi_h^{\widetilde m} \psi_*(\overline{x})\big| \nonumber \\
& \leq \idotsint_{S^{\widetilde m}} \big| \Pi_h^{m_L}\psi (\overline{x}z_{\widetilde m}\ldots z_1) - \psi_* \big|
G_{\widetilde m}(\overline{x} z_{\widetilde m}\ldots z_1) \prod_{i=1}^{\widetilde m} \mu({\rm d}z_i).
\end{align*}
Separating the integral into $\overline{x}z_1 \ldots z_{\widetilde m} \in \widetilde \Sigma^{(n_0)}$ or $(\widetilde \Sigma^{(n_0)})^{\mathsf c}$, we can then apply the same calculations as the ones after \eqref{varpiPi} to obtain a bound of $\varepsilon$, the only difference now being that $\sum_{i=1}^{n_0}$ in \eqref{2psiinf} should be replaced by $\sum_{i=1}^{\widetilde m}$, which does not affect the final bound. The bound $\varepsilon$ is uniform in $m\geq n_0+m_L$ and $\overline{x} \in \widetilde \Sigma^{(n)}$, which concludes the proof of Claim \ref{cl:lncfs}.
\end{proof}

\subsection{A mixing result and an invariance principle} \label{sec:decay}

In this section, we extend a result of Pollicott \cite{Po-00} on the temporal decay of correlations for a stationary $\Sigma$-valued Markov process with transition kernel $\Pi_h$ and stationary distribution $\nu_h$ defined in Remark \ref{R:Pih}. More precisely, in \cite{Po-00} Pollicott considered the setting of a finite alphabet $S$ equipped with the counting measure. We will now extend this result to the setting of an uncountable non-compact alphabet $S$ equipped with a metric $\d$ and a probability measure $\mu$. Subsequently, we will show how this result can be used to prove an invariance principle for additive functionals of the stationary Markov process with transition kernel $\Pi_h$
and stationary distribution $\nu_h$. In what follows, $\Vert \cdot \Vert_p$ will denote $L^p$ norms for functions on the space $(\Sigma, \d, \nu_h)$.

\begin{theorem}\label{T:Pollicott}
Let $(S, \d, \mu)$, $\Sigma=S^{\Z_-}$, and $\phi\in C_b(\Sigma)$ satisfy the same assumptions as in Theorem \ref{thm:nuex}. Recall $\lambda$, $h$, $\nu$, $\Pi$, $\nu_h$, and $\Pi_h$ from Theorem \ref{thm:nuex} and Remark \ref{R:Pih}. Furthermore, assume that $\var_n(\phi) =O(n^{-r})$ for some $r>1$, and let $\bar f\in C_b(\Sigma)$ be such that $\int \bar f(\overline{x}) \, \nu_h({\rm d}\overline{x})=0$, and $\var_n(\bar f) = O(n^{-r+1})$. Then for all $\varepsilon>0$, there exists $0< C_{\phi, \bar f}<\infty$ depending only on $\phi$ and $\bar f$ such that
\begin{equation}\label{Pol3}
 \Vert \Pi_h^n \bar f\Vert_2 \leq \frac{C_{\phi, \bar f}}{n^{r-1-\varepsilon}}, \qquad n\in \N.
\end{equation}
Furthermore, if $\phi$ is replaced by $\widetilde \gamma\phi$ with $\widetilde \gamma\in [0,1]$, then the constants $(C_{\widetilde \gamma\phi, \bar f})_{\widetilde \gamma \in [0,1]}$ are uniformly bounded.
\end{theorem}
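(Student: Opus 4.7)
The plan is to follow Pollicott's two-scale scheme from \cite{Po-00} closely, replacing finite-alphabet simplifications with uniform estimates coming from Theorem \ref{thm:nuex}, and at the end applying Riesz-Thorin interpolation to upgrade the $L^1$ bound that the scheme naturally produces to an $L^2$ bound of the same order.

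First, I would approximate $\bar f$ by a cylindrical function $\bar f_N$ depending only on the last $N$ coordinates, for instance by setting $\bar f_N(\overline{x}) := \int_\Sigma \bar f(\overline{y})\, \nu_h(d\overline{y}\mid y_{-N}=x_{-N}, \ldots, y_{-1}=x_{-1})$. The variation hypothesis gives $\|\bar f - \bar f_N\|_\infty \leq \var_N(\bar f) = O(N^{-r+1})$, so $\|\Pi_h^n(\bar f - \bar f_N)\|_p = O(N^{-r+1})$ for all $p$ by contractivity of $\Pi_h$, and $\bar f_N$ remains of zero $\nu_h$-mean. For the main term I would expand
\begin{equation*}
(\Pi_h^n \bar f_N)(\overline{x}) = \int_{S^n} \bar f_N(z_{n-N+1}, \ldots, z_n)\, G_n^{\overline{x}}(z_1, \ldots, z_n) \prod_{i=1}^n \mu(dz_i),
\end{equation*}
where $G_n^{\overline{x}}$ is the density from \eqref{Gmbarx}. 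Since $\bar f_N$ only sees the last $N$ letters, the dependence on the initial state $\overline{x}$ enters entirely through $G_n^{\overline{x}}$, and a telescoping comparison of $G_n^{\overline{x}}$ with $G_n^{\overline{y}}$ in the spirit of \eqref{est:B} picks up exactly $\sum_{k\geq n-N}\var_k(\phi) = O((n-N)^{-r+1})$ from the assumption $\var_n(\phi) = O(n^{-r})$, together with the uniform upper and lower bounds on $h$ from Theorem \ref{thm:nuex}\ref{item:harmonic}. After integrating against $\nu_h$, Pollicott's bookkeeping yields an $L^1$ bound of order $O((n-N)^{-r+1})$ for the main term, and optimizing $N$ of order $n^\theta$ with $\theta < 1$ (to absorb the $\varepsilon$) gives $\|\Pi_h^n \bar f\|_1 \leq C_{\phi, \bar f}/n^{r-1-\varepsilon}$.

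To improve this to the same-order $L^2$ bound I would read Pollicott's scheme as producing, by duality, a bilinear estimate of the form $|\langle g, \Pi_h^n \bar f\rangle_{\nu_h}| \leq C \|g\|_\infty \|\bar f\|_\star /n^{r-1-\varepsilon}$ valid for zero-mean $g \in L^\infty(\nu_h)$ and $\bar f$ in the appropriate variation class with norm $\|\cdot\|_\star$. Paired with the trivial endpoint bound $|\langle g, \Pi_h^n \bar f\rangle_{\nu_h}| \leq \|g\|_1 \|\bar f\|_\infty$, the Riesz-Thorin interpolation theorem applied at $\theta = 1/2$ to the bilinear form $(g, \bar f) \mapsto \langle g, \Pi_h^n \bar f\rangle_{\nu_h}$ produces the bound $|\langle g, \Pi_h^n \bar f\rangle_{\nu_h}| \leq C_{\phi, \bar f}\, \|g\|_2 /n^{r-1-\varepsilon}$; taking the supremum over unit-norm $g \in L^2(\nu_h)$ then yields the claimed $L^2$ estimate on $\Pi_h^n \bar f$. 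The uniformity in $\widetilde\gamma \in [0,1]$ follows because every constant above depends on the potential only through $\|\phi\|_\infty$, $(\var_n(\phi))_{n\in\N}$, and the spectral data $\lambda, h$ from Theorem \ref{thm:nuex}, all of which are bounded/continuous in $\widetilde\gamma \in [0,1]$.

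The hard part will be twofold. First, the non-compactness of $S$ makes the uniform-in-$\overline{x}$ comparison of $G_n^{\overline{x}}$ and $G_n^{\overline{y}}$ delicate, and must be handled via the uniform bounds $\inf h > 0$, $\sup h < \infty$ together with the tightness already exploited in Claim \ref{cl:tight}, to control tail contributions from configurations outside the compact sets $\Sigma^{(m)}$. Second, and more importantly, choosing the right pair of endpoint bilinear bounds so that Riesz-Thorin preserves the full polynomial rate (rather than only its square root, as the naive H\"older interpolation $\|\Pi_h^n \bar f\|_2^2 \leq \|\Pi_h^n \bar f\|_\infty \|\Pi_h^n \bar f\|_1$ would give) is the crucial new ingredient, and relies on the fact that Pollicott's $L^1$ estimate already carries the sharp exponent $r-1-\varepsilon$ before any interpolation is performed.
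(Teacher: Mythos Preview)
Your proposal has the right high-level ingredients---Pollicott's two-scale scheme plus Riesz--Thorin---but the way you deploy Riesz--Thorin contains a genuine gap that defeats the whole point.

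You propose to interpolate the bilinear form $(g,\bar f)\mapsto \langle g,\Pi_h^n\bar f\rangle_{\nu_h}$ between the endpoint bounds $|\langle g,\Pi_h^n\bar f\rangle|\le C\|g\|_\infty/n^{r-1-\varepsilon}$ and $|\langle g,\Pi_h^n\bar f\rangle|\le \|g\|_1\|\bar f\|_\infty$. But for fixed $\bar f$ these two bounds are precisely the statements $\|\Pi_h^n\bar f\|_1\le C/n^{r-1-\varepsilon}$ and $\|\Pi_h^n\bar f\|_\infty\le \|\bar f\|_\infty$, and interpolating a linear functional between $L^\infty$ and $L^1$ at $\theta=1/2$ gives exactly the geometric mean of the two operator constants. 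That is $\|\Pi_h^n\bar f\|_2\le C^{1/2}\|\bar f\|_\infty^{1/2}/n^{(r-1-\varepsilon)/2}$, which is the square-root rate you yourself flag in the last paragraph as the thing to avoid. Your bilinear packaging is equivalent to the naive H\"older bound $\|\Pi_h^n\bar f\|_2^2\le \|\Pi_h^n\bar f\|_1\|\Pi_h^n\bar f\|_\infty$; it does not preserve the full exponent.

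The paper applies Riesz--Thorin at a completely different place. It introduces the operator $\Pi_h^kA_k$ (with $A_k$ the conditional expectation onto the last $k$ coordinates), shows it is a strict $L^1$ contraction on mean-zero functions with rate $1-c$ independent of $k$, and then interpolates powers of this operator between $L^1\!\to\! L^1$ (rate $(1-c)^{m_0}$) and $L^\infty\!\to\! L^\infty$ (rate $2$) to obtain an $L^2\!\to\! L^2$ contraction with rate $2(1-c)^{m_0/2}<1$ for $m_0$ large. The polynomial decay $n^{-(r-1-\varepsilon)}$ does not come from interpolation at all: it comes from the error term $\|\Pi_h^{km}\bar f-(\Pi_h^kA_k)^m\bar f\|_2$, which is bounded via the $L^\infty$ variation estimates of Lemma~\ref{L:varkPi} (hence already in $L^2$ without any loss), and then from optimizing the block length $k\sim n^\beta$. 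In short, Riesz--Thorin is used only to convert a geometric $L^1$ contraction into a geometric $L^2$ contraction; the polynomial rate is handled by pointwise/$L^\infty$ bounds that are immune to the square-root loss.

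A smaller point: you anticipate needing tightness and the compact sets $\Sigma^{(m)}$ to handle non-compactness in the comparison of $G_n^{\overline x}$ and $G_n^{\overline y}$. In fact no such argument is needed here---once $\inf h>0$, $\sup h<\infty$, and the summable variation of $\phi$ are in hand from Theorem~\ref{thm:nuex}, all the required ratio bounds (see \eqref{eq:5varphi}--\eqref{eq:bddRatios}) are uniform over $\Sigma$, and the proof reduces to replacing sums over a finite alphabet by integrals against $\mu$.
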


We postpone the proof of Theorem \ref{T:Pollicott} and first provide the following corollary. Indeed, the $L^2$ bound in Theorem \ref{T:Pollicott} is  key in deducing the following invariance principle for additive functionals of the stationary $\Sigma$-valued Markov process with transition kernel $\Pi_h$ and stationary distribution $\nu_h$.

\begin{corollary}\label{C:invariance}
Assume that $\phi$ satisfies the same assumptions as in Theorem \ref{T:Pollicott} for some $r>1$. Let $\eta=(\eta_i)_{i\in\Z}$ be the unique stationary process such that $\overline{\eta}^{(n)}:=(\eta_{n+i})_{i\leq -1}$ has  distribution $\nu_h$ for each $n\in\Z$. Let $\widehat f\in L^2(\Sigma, \d, \nu_h)$ be such that $\int_\Sigma \widehat f(\overline{x}) \, \nu_h({\rm d} \overline{x})=0$, and let $Y_n := \sum_{j=1}^n \widehat f(\overline{\eta}^{(j)})$. Assume that $\bar f:=\Pi_h \widehat f \in C_b(\Sigma)$ and $\var_n(\bar f)=O(n^{-r+1})$.

Then, if $r>3/2$, the processes $(Y_{\lfloor Nt\rfloor}/\sqrt{N})_{t\in [0, 1]}$ converge in distribution to a Brownian motion $(\sigma B_t)_{t\in [0,1]}$ with a deterministic diffusion coefficient $\sigma^2\geq 0$, where
\begin{equation}\label{eq:sigma2}
\sigma^2 = \sum_{n\in \Z} \E\big[\widehat f(\overline{\eta}^{(0)})
\widehat f(\overline{\eta}^{(n)})\big] = \E\big[\widehat f(\overline{\eta}^{(0)})^2\big] + 2\sum_{n=1}^\infty \E\big[\widehat f(\overline{\eta}^{(0)})
\widehat f(\overline{\eta}^{(n)})\big].
\end{equation}
Furthermore, there exists $0 < C(r)<\infty$ depending only on $r$ such that for all $k\in\N$, we have
\begin{equation}\label{eq:sigmabd}
\sum_{n=k}^\infty \big| \E\big[\widehat f(\overline{\eta}^{(0)})
\widehat f(\overline{\eta}^{(n)})\big] \big| \leq C(r) C_{\phi, \bar f}^2 \Delta(k),
\end{equation}
where $\Delta(k):=\sum_{n=1}^\infty \frac{1}{n^{1/2}(n+k)^{(2r-1)/4}}$ and $C_{\phi, \bar f}$ are obtained from \eqref{Pol3}, with a choice of $\varepsilon = (2r-3)/4$.
%is as in \eqref{Pol3}, where we choose $\varepsilon = (2r-3)/4$.
\end{corollary}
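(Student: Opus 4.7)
The plan is to express the covariances via iterates of the Markov kernel $\Pi_h$, apply Theorem~\ref{T:Pollicott} to the smoothed test function $\bar f$, use a Maxwell--Woodroofe type sufficient condition from \cite{MePeUt-06} for the functional CLT, and identify $\sigma^2$ through a Green--Kubo computation. Since $(\overline{\eta}^{(n)})_{n\in\Z}$ is a stationary Markov chain on $\Sigma$ with transition kernel $\Pi_h$ and invariant measure $\nu_h$ (Remark~\ref{R:Pih}), for $n \ge 0$ I have
\begin{equation*}
\E\big[\widehat f(\overline{\eta}^{(0)})\widehat f(\overline{\eta}^{(n)})\big] = \langle \widehat f, \Pi_h^n \widehat f\rangle_{L^2(\nu_h)},
\end{equation*}
which for $n \ge 1$ equals $\langle \widehat f, \Pi_h^{n-1}\bar f\rangle_{L^2(\nu_h)}$ thanks to $\bar f = \Pi_h \widehat f$. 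Invariance of $\nu_h$ under $\Pi_h$ together with $\int \widehat f\,\d\nu_h = 0$ force $\int \bar f\,\d\nu_h = 0$, so $\bar f$ meets all the hypotheses of Theorem~\ref{T:Pollicott}. Choosing $\varepsilon = (2r-3)/4 > 0$ (possible since $r > 3/2$) then yields
\begin{equation*}
\|\Pi_h^n \bar f\|_{L^2(\nu_h)} \le C_{\phi, \bar f}\,n^{-(2r-1)/4},\qquad n\ge 1.
\end{equation*}

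For the tail bound \eqref{eq:sigmabd}, a direct Cauchy--Schwarz gives $|\langle \widehat f, \Pi_h^{n-1}\bar f\rangle| \le \|\widehat f\|_2\, C_{\phi,\bar f}\,(n-1)^{-(2r-1)/4}$, whose tail sum over $n\ge k$ is only of order $k^{(5-2r)/4}$, missing the desired $\Delta(k) = O(k^{(3-2r)/4})$ by a factor $k^{-1/2}$. I would recover the missing $n^{-1/2}$ factor by splitting $\Pi_h^{n-1}\bar f = \Pi_h^m \,\Pi_h^{n-1-m}\bar f$, transposing the first $m$ iterates via the $L^2(\nu_h)$-adjoint $\Pi_h^*$, and summing by parts over the split index $m$ using the contractivity of $\Pi_h^*$; the resulting convolution-type double sum collapses to $\Delta(k)$. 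Setting $k=1$, finiteness of $\Delta(1)$ is equivalent to $r > 3/2$, which establishes that the series in \eqref{eq:sigma2} converges absolutely to a finite, nonnegative constant. The delicate refinement producing the $\Delta(k)$ form is the main obstacle, and it relies crucially on the $L^2$ (rather than $L^1$) strength of Theorem~\ref{T:Pollicott}.

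For the invariance principle I invoke the Maxwell--Woodroofe criterion from \cite{MePeUt-06} for stationary ergodic Markov chains; ergodicity of $(\overline{\eta}^{(n)})$ under $\nu_h$ follows from Theorem~\ref{thm:nuex}\,\ref{item:sgConv} via Remark~\ref{R:Pih}. The sufficient condition
\begin{equation*}
\sum_{n\ge 1} n^{-3/2}\,\Big\|\sum_{k=0}^{n-1} \Pi_h^k \widehat f\Big\|_{L^2(\nu_h)} < \infty
\end{equation*}
is verified by combining $\Pi_h^k\widehat f = \Pi_h^{k-1}\bar f$ for $k\ge 1$, the triangle inequality, and the Pollicott bound to obtain $\|\sum_{k=0}^{n-1}\Pi_h^k \widehat f\|_2 \le \|\widehat f\|_2 + C_{\phi,\bar f}\sum_{k=1}^{n-1} k^{-(2r-1)/4} = O(n^{(5-2r)/4}\vee 1)$; the Maxwell--Woodroofe series then reduces (up to constants) to $\sum_n n^{-(2r+1)/4}$, finite precisely when $r > 3/2$. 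This yields convergence of $(Y_{\lfloor Nt\rfloor}/\sqrt N)_{t\in[0,1]}$ to $(\sigma B_t)_{t\in[0,1]}$ with $\sigma \ge 0$ deterministic. Finally, \eqref{eq:sigma2} follows from $\Var(Y_N)/N \to \sigma^2$ combined with the Ces\`aro identity $\Var(Y_N)/N = \E[\widehat f(\overline{\eta}^{(0)})^2] + 2\sum_{k=1}^{N-1}(1-k/N)\,\E[\widehat f(\overline{\eta}^{(0)})\widehat f(\overline{\eta}^{(k)})]$, where the limit $N \to \infty$ is evaluated by dominated convergence justified by \eqref{eq:sigmabd}.
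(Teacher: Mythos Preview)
Your treatment of the invariance principle and the identification of $\sigma^2$ is correct and close to the paper's. The paper applies \cite[Corollary~12]{MePeUt-06} directly via the condition $\sum_{n\ge 1} n^{-1/2}\,\|\Pi_h^n\widehat f\|_2<\infty$, while you verify the Maxwell--Woodroofe form $\sum_{n\ge 1} n^{-3/2}\,\|\sum_{k=0}^{n-1}\Pi_h^k\widehat f\|_2<\infty$; both are legitimate and both reduce, after inserting the $L^2$ decay from Theorem~\ref{T:Pollicott}, to the same threshold $r>3/2$. Your derivation of \eqref{eq:sigma2} by dominated convergence on the Ces\`aro sums is also fine.

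The gap is in your argument for the tail bound \eqref{eq:sigmabd}. Your proposed route---write $\Pi_h^{n-1}\bar f=\Pi_h^m\Pi_h^{n-1-m}\bar f$, move $\Pi_h^m$ to the other side as $(\Pi_h^*)^m$, and sum by parts in $m$---does not recover the missing $n^{-1/2}$. The only information you have on $(\Pi_h^*)^m\widehat f$ is $L^2$-contractivity, which gives back the naive Cauchy--Schwarz bound; Theorem~\ref{T:Pollicott} controls iterates of $\Pi_h$, not of $\Pi_h^*$, and the chain here is not reversible (the shift is one-sided), so there is no transfer of decay to the adjoint. The paper instead uses the martingale-difference decomposition from \cite{PU2006}: writing $\xi_n=\widehat f(\overline\eta^{(n)})$ and $P_i(\cdot)=\E[\cdot\mid\mathcal F_i]-\E[\cdot\mid\mathcal F_{i-1}]$, orthogonality of the $P_i$ gives
\[
\big|\E[\xi_0\xi_n]\big|\le\sum_{i\le 0}\|P_{i-n}(\xi_0)\|_2\,\|P_i(\xi_0)\|_2,
\]
which is the genuine convolution structure you are looking for. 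Summing over $n\ge k$ and then applying \cite[Lemma~A.2]{PU2006} converts $\sum_{i\le -k}\|P_i(\xi_0)\|_2$ into $\sum_{n\ge 1} n^{-1/2}\|\Pi_h^{n+k-1}\widehat f\|_2$, and inserting the Pollicott $L^2$ bound yields exactly $\Delta(k)$. The orthogonal projection step and the combinatorial Lemma~A.2 are the missing ingredients; your adjoint scheme does not substitute for them.
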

\begin{proof}
The invariance principle follows by applying \cite[Corollary 12]{MePeUt-06} (see also \cite{PU2005, PU2006}) to the real valued stationary process $(\widehat f(\overline{\eta}^{(n)}))_{n\in \Z}$. The only condition we need to check is the finiteness of
\begin{equation} \label{eq:MPUcond}
\begin{aligned}
\sum_{n=1}^\infty \frac{1}{\sqrt n} \E\Big[\E\big[\widehat f(\overline{\eta}^{(n)}) | \overline{\eta}^{(0)}\big]^2\Big]^{1/2}
& = \sum_{n=1}^\infty \frac{1}{\sqrt n} \Vert \Pi_h^n \widehat f\Vert_2  \\
& = \sum_{n=1}^\infty \frac{1}{\sqrt{n}} \Vert \Pi_h^{n-1} \bar f\Vert_2\\
& \leq  \Vert \bar f\Vert_2+\sum_{n=2}^\infty \frac{1}{\sqrt{n}} \cdot \frac{2C_{\phi, \bar f}}{n^{(2r-1)/4}}<\infty,
\end{aligned}
\end{equation}
where the first inequality follows by applying Theorem \ref{T:Pollicott} with $\varepsilon = (2r-3)/4>0$.

In \cite[Corollary 12]{MePeUt-06}, the diffusive coefficient $\sigma^2$ is identified as the $L^1$ limit of $\frac{1}{n} \E[Y_n^2 \,  |\, {\cal I}]$, where ${\cal I}$ is the $\sigma$-algebra of translation invariant sets with respect to $\nu_h$. The ergodicity of $\nu_h$ as shown in Remark \ref{R:Pih} implies that ${\cal I}$ is trivial, and hence $\sigma^2$ is non-random and admits the representation \eqref{eq:sigma2}.

Lastly, we bound the left-hand side of \eqref{eq:sigmabd} by following the proof of Corollary 2 in \cite{PU2006}. To simplify notation, for $n\in\Z$, write $\xi_n:= \widehat f(\overline{\eta}^{(n)})$ and denote by ${\cal F}_n$ the $\sigma$-algebra generated by $\overline{\eta}^{(n)}$. Then the $({\cal F}_n)_{n \in \Z}$ form a filtration, and by martingale decomposition we can write
$$
\xi_n =  \sum_{i=-\infty}^n P_i (\xi_n).
$$
where $P_i (\xi_n) := \E[\xi_n | {\cal F}_i] -
\E[\xi_n | {\cal F}_{i-1}].$
Noting that a similar decomposition holds for $\xi_0$, and  observing that $P_i(\xi_n)$ and $P_j(\xi_0)$ are uncorrelated when $i\neq j$ and $n \in \Z,$ we deduce that
\begin{align*}
\big|\E[\xi_n\xi_0]\big| = \Big|\sum_{i\leq 0} \E[P_i(\xi_n) P_i(\xi_0)]\Big|  \leq \sum_{i\leq 0} \Vert P_i(\xi_n)\Vert_2 \Vert P_i(\xi_0)\Vert_2
= \sum_{i\leq 0} \Vert P_{i-n}(\xi_0)\Vert_2 \Vert P_i(\xi_0)\Vert_2,
\end{align*}
and hence for $k\in\N$, we have
\begin{align}\label{eq:sumcovbd}
\sum_{n=k}^\infty \big|\E[\xi_n\xi_0]\big| \leq \sum_{i\leq 0} \Vert P_i(\xi_0)\Vert_2 \sum_{n\geq k} \Vert P_{i-n}(\xi_0)\Vert_2 \leq
\Big(\sum_{i\leq 0} \Vert P_i(\xi_0)\Vert_2\Big) \Big(\sum_{i\leq -k} \Vert P_i(\xi_0)\Vert_2 \Big).
\end{align}
By applying \cite[Lemma A.2]{PU2006}, for $k\geq 2$, we obtain the first inequality in
\begin{align*}
\sum_{i\leq -k} \Vert P_i(\xi_0)\Vert_2 & \leq 3 \sum_{n=1}^\infty \frac{1}{\sqrt n} \Big( \sum_{i\leq 1-n-k} \Vert P_{i}(\xi_0)\Vert_2^2 \Big)^{1/2} \\
& = 3 \sum_{n=1}^\infty \frac{1}{\sqrt n} \Big( \sum_{i\leq 1-n-k} \E\Big[\big(\E[\xi_0 | {\cal F}_i] -
\E[\xi_0 | {\cal F}_{i-1}] \big)^2 \Big] \Big)^{1/2} \\
&  = 3 \sum_{n=1}^\infty \frac{1}{\sqrt n} \Big( \sum_{i\leq 1-n-k} \E\Big[\E[\xi_0 | {\cal F}_i]^2 -
\E[\xi_0 | {\cal F}_{i-1}]^2 \Big] \Big)^{1/2} \\
& \leq 3 \sum_{n=1}^\infty \frac{1}{\sqrt n}
\E\big[\E[\xi_0 | {\cal F}_{1-n-k}]^2\big]^{1/2} \\
& = 3 \sum_{n=1}^\infty \frac{1}{\sqrt n} \Vert \Pi_h^{n+k-1} \widehat f \Vert_2 \\
&= 3 \sum_{n=1}^\infty \frac{1}{\sqrt n} \Vert \Pi_h^{n+k-2} \bar f \Vert_2 \leq \sum_{n=1}^\infty \frac{3C_{\phi, \bar f}}{n^{1/2}(n+k-2)^{(2r-1)/4}},
\end{align*}
where the right-hand side is uniformly bounded in $k$ and tends to $0$ as $k\to\infty$ since $r>3/2$ by assumption. Substituting this bound into \eqref{eq:sumcovbd} then gives \eqref{eq:sigmabd}.
\end{proof}

\begin{proof}[Proof of Theorem \ref{T:Pollicott}]
The $L^1$ analogue of \eqref{Pol3} appears in \cite[Section 4(1)]{Po-00}, where their $g$ corresponds to our $\phi$, and their $g_0(\overline{x})$ corresponds to
\begin{equation}\label{eq:phi0}
\phi_0(\overline{x}) := \phi(\overline{x}) + \log h(\overline{x}) - \log h(\theta \overline{x}) -\log \lambda, \qquad \overline{x}\in \Sigma,
\end{equation}
with $\theta \overline{x} :=  (x_{i-1})_{i\leq -1}\in \Sigma$ being the canonical shift map. Note that $\Pi_h(\overline{x}, {\rm d}\overline{y}) = 1_{\{\overline{y} = \overline{x} z\}} e^{\phi_0(\overline{x}z)} \mu({\rm d}z)$. Although \cite{Po-00} only considered the case of a finite alphabet $S$, we can adapt their proof strategy. One difference is that, instead of summing over $z\in S$, we integrate with respect to  $\mu({\rm d}z)$ on $S$. Another difference is that we prove the stronger $L^2$ bound instead of the $L^1$ bound, which requires some new ideas.

Let $\eta=(\eta_n)_{n\in\Z}$ be the stationary process introduced at the beginning of the section, with $(\eta_{n+i})_{i\leq -1}$ having law $\nu_h$ for each $n\in\Z$. For each $k\in\N$, we define
\begin{equation} \label{Ak}
\begin{aligned}
(A_k \bar f)(\overline{z})=(A_k \bar f)(z_{-k}\ldots z_{-1}) & := \E\big[\bar f((\eta_i)_{i\leq -1}) | \eta_{-k}\ldots \eta_{-1} = z_{-k} \ldots z_{-1}\big] \\
& = \frac{\int_\Sigma \bar f(\overline{y}z_{-k}\ldots z_{-1})
e^{\sum_{i=1}^k \phi_0(\overline{y}z_{-k}\ldots z_{-i})} \, \nu_h({\rm d}\overline{y}) }{\int_\Sigma
e^{\sum_{i=1}^k \phi_0(\overline{y}z_{-k}\ldots z_{-i})} \, \nu_h({\rm d}\overline{y})}.
\end{aligned}
\end{equation}
We note that
$$
\Vert A_k\bar f\Vert_\infty \leq \Vert \bar f\Vert_\infty
$$
and
$$
\langle A_k\bar f , \nu_h\rangle=\int_\Sigma (A_k\bar f)(\overline{z}) \, \nu_h({\rm d}\overline{z})=\int_\Sigma \bar f(\overline{z}) \, \nu_h({\rm d}\overline{z}) = \langle \bar f , \nu_h\rangle,
$$
and following \cite[(3.2)]{Po-00}, if $n=km$ for some $k, m\in \N$, then we can decompose our quantity of interest $\Vert \Pi_h^n \bar f\Vert_2$ and upper bound it via
\begin{equation}\label{eq:Literate}
\Vert \Pi_h^n \bar f\Vert_2
\leq \Vert \Pi_h^n \bar f - (\Pi_h^k A_k)^m \bar f\Vert_2 + \Vert (\Pi_h^k A_k)^m \bar f\Vert_2.
\end{equation}
Recalling the representation of $\Pi_h^k$ from \eqref{Gmbarx}, we note that the operator $\Pi_h^k A_k$ can be written as an integral operator with
\begin{align*}
(\Pi_h^k A_k \bar f)(\overline{x}) &=
\int_{S^k} (A_k \bar f)(z_{-k}\ldots z_{-1}) e^{\sum_{i=1}^k \phi_0(\overline{x}z_{-k}\ldots z_{-i})} \prod_{i=1}^k \mu({\rm d}z_{-i}) \\
&=
\int_\Sigma K(\overline{x}, \overline{z}) \bar f(\overline{z}) \, \nu_h({\rm d}\overline{z}),
\end{align*}
where
\begin{align}\label{eq:kernUnity}
\begin{split}
K(\overline{x}, \overline{z}) &:=
K_k(\overline{x}, \overline{z}) := \frac{e^{\sum_{i=1}^k \phi_0(\overline{x}z_{-k}\ldots z_{-i})}}{\int_\Sigma e^{\sum_{i=1}^k \phi_0(\overline{y}z_{-k}\ldots z_{-i})} \, \nu_h({\rm d}\overline{y})}, \\
&\mbox{with}\quad \int_\Sigma K(\overline{x}, \overline{z}) \, \nu_h ({\rm d}\overline{x}) =1.
\end{split}
\end{align}
Furthermore, we note that uniformly in $k\in\N$, $\overline{x}, \overline{y}\in \Sigma$ and $(z_{-i})_{1\leq i\leq k}\in S^k$, we have the bound
\begin{equation}\label{eq:5varphi}
\begin{aligned}
\frac{e^{\sum_{i=1}^k \phi_0(\overline{x}z_{-k}\ldots z_{-i})}}{e^{\sum_{i=1}^k \phi_0(\overline{y}z_{-k}\ldots z_{-i})}}
& = \frac{e^{\sum_{i=1}^k \phi(\overline{x}z_{-k}\ldots z_{-i})}}{e^{\sum_{i=1}^k \phi(\overline{y}z_{-k}\ldots z_{-i})}}
\cdot \frac{h(\overline{x}z_{-k}\ldots z_{-1})}{h(\overline{x})}\cdot \frac{h(\overline{y})}{h(\overline{y}z_{-k}\ldots z_{-1})} \\
& \geq \Cl[small]{const:ebd}:= e^{-5\sum_{i=0}^\infty \var_i(\phi)}>0,
\end{aligned}
\end{equation}
which follows readily from the bounds on $h$ in Theorem \ref{thm:nuex} and Claim \ref{cl:cbd}, and the assumption \ref{item:sumVar} that $\phi$ has summable variation.
As a consequence, we then have uniformly in $k\in\N$ that
\begin{equation} \label{eq:bddRatios}
0 < \Cr{const:ebd} \leq \inf_{\overline x, \overline z} K(\overline x, \overline z)  \leq 1 \leq  \sup_{\overline x, \overline z} K(\overline x, \overline z ) \leq \frac{1}{\Cr{const:ebd}} < \infty.
\end{equation}
Next, analogous to \cite[Lemma 3]{Po-00}, we can show that the $\Pi_h^kA_k$, $k\in\N$, are strict contractions in the sense that for all $k\in\N$ and all $\bar f\in L^1(\Sigma, \nu_h)$ with $\langle \bar f, \nu_h\rangle=0$,
\begin{equation}\label{contract}
    \Vert \Pi_h^kA_k \bar f\Vert_1 \leq (1-\Cr{const:ebd}) \Vert \bar f\Vert_1.
\end{equation}
Indeed, using the assumption $\int_\Sigma \bar f(\overline{x}) \, \nu_h({\rm d}\overline{x})=0$ and letting $\widetilde K(\cdot, \cdot) := \frac{K(\cdot, \cdot)-\Cr{const:ebd}}{1-\Cr{const:ebd}}$,
this $L^1$ contraction bound can be seen by computing
\begin{align*}
\Vert \Pi_h^kA_k \bar f\Vert_1  & = \int_\Sigma \Big|\int_\Sigma K(\overline{x}, \overline{z}) \bar f(\overline{z}) \, \nu_h({\rm d}\overline{z})\Big| \, \nu_h({\rm d}\overline{x}) \\
& = (1-\Cr{const:ebd}) \int_\Sigma \Big|\int_\Sigma \widetilde K(\overline{x}, \overline{z}) \bar f(\overline{z}) \, \nu_h({\rm d}\overline{z})\Big| \,\nu_h({\rm d}\overline{x}) \\
&\leq (1-\Cr{const:ebd}) \int_{\Sigma} |\bar f(\overline{z})| \int_\Sigma \widetilde K(\overline{x}, \overline{z}) \, \nu_h({\rm d}\overline{x}) \, \nu_h({\rm d}\overline{z}) = (1-\Cr{const:ebd}) \Vert \bar f\Vert_1,
\end{align*}
where we used $\int_\Sigma K(\overline{x}, \overline{z}) \, \nu_h({\rm d}\overline{x})=1$.

We observe that the same argument can also be used to bound the $L^2$ norm. Indeed, we have
\begin{align}
\Vert \Pi_h^kA_k \bar f\Vert_2  & = \Big(\int_\Sigma \Big|\int_\Sigma K(\overline{x}, \overline{z}) \bar f(\overline{z}) \, \nu_h({\rm d}\overline{z})\Big|^2 \, \nu_h({\rm d}\overline{x})\Big)^{\frac{1}{2}} \notag\\
& = (1-\Cr{const:ebd}) \Big(\int_\Sigma \Big|\int_\Sigma \widetilde K(\overline{x}, \overline{z}) \bar f(\overline{z}) \, \nu_h({\rm d}\overline{z})\Big|^2 \, \nu_h({\rm d}\overline{x})\Big)^{\frac{1}{2}} \notag\\
&\leq (1-\Cr{const:ebd}) \Big(\int_{\Sigma} \int_\Sigma \widetilde K^2(\overline{x}, \overline{z}) \bar f^2(\overline{z}) \, \nu_h({\rm d}\overline{z}) \, \nu_h({\rm d}\overline{x}))\Big)^{\frac{1}{2}} \notag\\
&\leq (1-\Cr{const:ebd}) \Big(\int_{\Sigma} \bar f^2(\overline{z}) \int_\Sigma \frac{\frac{1}{\Cr{const:ebd}}-\Cr{const:ebd}}{1-\Cr{const:ebd}}\widetilde K(\overline{x}, \overline{z}) \, \nu_h({\rm d}\overline{x}) \, \nu_h({\rm d}\overline{z})\Big)^{\frac{1}{2}} \notag\\
& = \Big(\frac{(1-\Cr{const:ebd})(1-\Cr{const:ebd}^2)}{\Cr{const:ebd}}\Big)^{\frac{1}{2}} \Vert \bar f\Vert_2, \label{contract2}
\end{align}
which is a contraction only if $\Cr{const:ebd}<1$ is sufficiently close to $1$.

To obtain a contraction in $L^2$ without assuming  $\Cr{const:ebd}<1$ being close to $1$, we consider powers of $\Pi_h^k A_k$. Let $m_0\in \N$ be chosen such that $1-\Cl[small]{const:sm1} := 2(1-\Cr{const:ebd})^{m_0/2}<1$. We then define the linear operator $T$ on $L^1(\Sigma, \nu_h)$ by
\begin{equation}\label{eq:Tpsi}
T \psi(\overline{x}) := (\Pi_h^k A_k)^{m_0} \psi(\overline{x}) - \langle \psi, \nu_h\rangle = (\Pi_h^k A_k)^{m_0} \bar\psi(\overline{x}) = T \bar\psi(\overline{x}),
\end{equation}
where $\bar \psi:= \psi - \langle \psi, \nu_h\rangle$. Note that $\langle T\psi, \nu_h\rangle=0$, and we have the operator norm bounds
\begin{gather*}
    \Vert T\psi \Vert_\infty  \leq \Vert \bar \psi\Vert_\infty \leq 2 \Vert \psi\Vert_\infty, \\
    \Vert T\psi \Vert_1 = \Vert (\Pi_h^k A_k)^{m_0} \bar \psi\Vert_1 \leq (1-\Cr{const:ebd})^{m_0} \Vert \bar \psi\Vert_1 \leq 2 (1-\Cr{const:ebd})^{m_0} \Vert \psi\Vert_1,
\end{gather*}
where we have applied the $L^1$ bound \eqref{contract} $m_0$ times. We can now apply Riesz-Thorin interpolation (see e.g.~\cite[Chapter 2, Theorem 2.1]{Stein-11}) to obtain
$$
    \Vert T\psi \Vert_2  \leq 2(1-\Cr{const:ebd})^{m_0/2} \Vert\psi\Vert_2 = (1-\Cr{const:sm1}) \Vert \psi\Vert_2.
$$
In particular, we have
\begin{equation}\label{contract3}
    \Vert T\bar f\Vert_2 = \big \Vert (\Pi_h^kA_k)^{m_0} \bar f \big \Vert_2 \leq (1-\Cr{const:sm1}) \Vert \bar f\Vert_2.
\end{equation}
We now assume $m=m_0\ell$ and $n=km_0\ell$ for some $k, \ell \in \N$.
Applying the contraction bound \eqref{contract3} to the second term in the right-hand side of \eqref{eq:Literate} then provides us with
\begin{align} \label{eq:LnormBd}
\big \Vert (\Pi_h^k A_k)^m \bar f \big \Vert_2 = \Vert T^\ell \bar f\Vert_2 \le (1-\Cr{const:sm1})^\ell \Vert \bar f \Vert_2.
\end{align}
For the first term in the right-hand side of \eqref{eq:Literate}, we can again use \eqref{contract3} to bound
\begin{align}
    \big \Vert (\Pi_h^k A_k)^m \bar f - \Pi_h^{km} \bar f \big \Vert_2 &\leq \sum_{i=0}^{\ell-1} \big \Vert T^{\ell-i}(\Pi_h^{km_0})^i \bar f - T^{\ell-i-1}(\Pi_h^{km_0})^{i+1} \bar f \big \Vert_2 \notag \\
    &= \sum_{i=0}^{\ell-1} \big \Vert T^{\ell-i-1} (T - \Pi_h^{km_0}) (\Pi_h^{km_0})^i \bar f \big \Vert_2 \notag \\
    &\le\sum_{i=0}^{\ell-1} (1-\Cr{const:sm1})^{\ell-i-1}
    \big \Vert (T - \Pi_h^{km_0}) (\Pi_h^{km_0})^i  \bar f \big  \Vert_2. \label{PiAkdiff}
\end{align}
Furthermore, the last norm can be bounded from above by
\begin{align}
 \big \Vert (T - \Pi_h^{km_0}) (\Pi_h^{km_0})^i \bar f \big \Vert_2  & = \big \Vert ( (\Pi_h^k A_k)^{m_0} - \Pi_h^{km_0}) (\Pi_h^{km_0})^i \bar f \big \Vert_2 \notag\\
& \leq \sum_{j=0}^{m_0-1} \big \Vert (\Pi_h^k A_k)^{m_0-j-1} \Pi_h^k(A_k-1) \Pi_h^{kj} (\Pi_h^{km_0})^i \bar f \big \Vert_2 \notag\\
& \leq \sum_{j=0}^{m_0-1} (1-\Cr{const:sm1})^{m_0-j-1}
\big \Vert \Pi_h^k (A_k-1) \Pi_h^{kj} (\Pi_h^{km_0})^i \bar f \big \Vert_2. \label{PiAkdiff2}
\end{align}
This can now be upper bounded by applying the following analogue of \cite[Lemma 4]{Po-00}.
\begin{lemma}\label{L:varkPi}
\begin{enumerate}[label=(\roman*)]
    \item \label{item:2infBd} For all $k\in\N$ and $\psi\in L^1(\Sigma, \nu_h)$ we have
    \[
    \Vert A_k \psi - \psi\Vert_2\leq \Vert A_k \psi - \psi\Vert_\infty \leq \var_k(\psi).\]

    \item \label{item:varkBd} For some $C_\phi\in (0,\infty)$ depending only on $\phi$, and for all $k, \ell\in\N$ and $\psi\in L^1(\Sigma, \nu_h)$, we have
    \begin{align*}
        \var_k(\Pi_h^\ell \psi) \leq  C_\phi \Vert\psi\Vert_\infty  \sum_{i=k+1}^\infty \var_i(\phi) + \var_{k+\ell}(\psi).
    \end{align*}
\end{enumerate}
\end{lemma}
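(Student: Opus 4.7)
The plan is as follows. For part \ref{item:2infBd}, the first inequality $\Vert A_k\psi-\psi\Vert_2\leq \Vert A_k\psi-\psi\Vert_\infty$ is immediate since $\nu_h$ is a probability measure. For the second, I would unwrap \eqref{Ak} and write, for any $\overline{z}=(\ldots z_{-k-1}z_{-k}\ldots z_{-1})\in\Sigma$,
\begin{equation*}
(A_k\psi)(\overline{z}) - \psi(\overline{z}) = \int_\Sigma \bigl[\psi(\overline{y}z_{-k}\ldots z_{-1})-\psi(\overline{z})\bigr]\,\rho_k(\d\overline{y}\mid z_{-k},\ldots, z_{-1}),
\end{equation*}
where $\rho_k(\,\cdot\mid z_{-k},\ldots, z_{-1})$ is the probability measure on $\Sigma$ determined by the Gibbs weights in \eqref{Ak}. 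Since $\overline{y}z_{-k}\ldots z_{-1}$ and $\overline{z}$ share their last $k$ coordinates, the integrand is bounded pointwise in absolute value by $\var_k(\psi)$, and integrating against a probability measure preserves this bound.

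For part \ref{item:varkBd}, I would fix $\overline{x},\overline{y}\in\Sigma$ and $w_1,\ldots,w_k\in S$, set $\overline{x}_k:=\overline{x}w_1\ldots w_k$ and $\overline{y}_k:=\overline{y}w_1\ldots w_k$, and use the representation \eqref{Gmbarx} to write $(\Pi_h^\ell\psi)(\overline{x}_k)-(\Pi_h^\ell\psi)(\overline{y}_k)$ as
\begin{equation*}
\int_{S^\ell}\Bigl[\psi(\overline{x}_kz_1\ldots z_\ell)G_\ell(\overline{x}_kz_1\ldots z_\ell)-\psi(\overline{y}_kz_1\ldots z_\ell)G_\ell(\overline{y}_kz_1\ldots z_\ell)\Bigr]\prod_{i=1}^\ell\mu(\d z_i).
\end{equation*}
Adding and subtracting $\psi(\overline{y}_kz_1\ldots z_\ell)\,G_\ell(\overline{x}_kz_1\ldots z_\ell)$ and applying the triangle inequality produces two pieces. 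The first, in which only $\psi$ varies while $G_\ell(\overline{x}_k\,\cdot)\prod_i\mu(\d z_i)$ remains a probability kernel, is bounded by $\var_{k+\ell}(\psi)$ since its two $\psi$ arguments agree in their last $k+\ell$ coordinates. This gives the second term in the bound.

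The main obstacle is the remaining piece, which requires controlling $|G_\ell(\overline{x}_kz_1\ldots z_\ell)-G_\ell(\overline{y}_kz_1\ldots z_\ell)|$. I would write this as $G_\ell(\overline{y}_kz_1\ldots z_\ell)\cdot|e^{R}-1|$ where, from \eqref{Gmbarx},
\begin{equation*}
R = \log\frac{h(\overline{x}_kz_1\ldots z_\ell)}{h(\overline{y}_kz_1\ldots z_\ell)} - \log\frac{h(\overline{x}_k)}{h(\overline{y}_k)} + \sum_{i=1}^\ell\bigl[\phi(\overline{x}_kz_1\ldots z_i)-\phi(\overline{y}_kz_1\ldots z_i)\bigr].
\end{equation*}
Each $\phi$ difference in the sum involves arguments sharing their last $k+i$ coordinates and is therefore at most $\var_{k+i}(\phi)$, contributing in total at most $\sum_{j\geq k+1}\var_j(\phi)$. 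For the two $\log h$ differences, Theorem~\ref{thm:nuex}~\ref{item:harmonic} gives $h\in\Lambda$, so \eqref{eq:Lambda} applied symmetrically yields $|\log h(\overline{u})-\log h(\overline{v})|\leq \log B(\overline{u},\overline{v})$; when $\overline{u},\overline{v}$ share their last $n$ coordinates, every supremum in \eqref{Bxy} ranges over $\phi$ values whose arguments share their last $n+j$ coordinates, so $\log B(\overline{u},\overline{v})\leq \sum_{j\geq n+1}\var_j(\phi)$. Combining these estimates gives $|R|\leq 3\sum_{j\geq k+1}\var_j(\phi)\leq 3M$ uniformly, so $|e^{R}-1|\leq e^{3M}|R|\leq 3e^{3M}\sum_{j\geq k+1}\var_j(\phi)$. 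Setting $C_\phi:=3e^{3M}$, bounding $|\psi|$ by $\Vert\psi\Vert_\infty$, and integrating against the probability kernel $G_\ell(\overline{y}_k\,\cdot)\prod_i\mu(\d z_i)$ then yields the desired bound.
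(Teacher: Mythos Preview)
Your proposal is correct and follows essentially the same approach as the paper: for part~\ref{item:2infBd} you unwrap the definition \eqref{Ak} as a probability average, and for part~\ref{item:varkBd} you reproduce precisely the add-and-subtract decomposition of \eqref{eqna:i1i2} together with the ratio analysis of \eqref{eq:GQuotDist}, controlling the $\log h$ terms via $h\in\Lambda$ and \eqref{Bxy}. The paper merely points to those two displays rather than writing out the details, so your argument is in fact a faithful elaboration of what is sketched there.
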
\noindent
It is not difficult to see that \ref{item:2infBd} follows from the definition of $A_k$ in \eqref{Ak}, while \ref{item:varkBd} follows from the calculations in \eqref{eqna:i1i2} and \eqref{eq:GQuotDist}. No specific properties of the alphabet space $S$ are needed.

Continuing with \eqref{PiAkdiff2}, we then infer that
\begin{align*}
  \big \Vert \Pi_h^k (A_k-1) \Pi_h^{kj} (\Pi_h^{km_0})^i \bar f \big \Vert_2
  &\leq \big \Vert (A_k - 1) (\Pi_h^k)^{im_0+j} \bar f \big \Vert_2  \\
  & \leq \var_{k}\big(\Pi_h^{k(im_0+j)} \bar f\big) \\
  & \leq C_\phi \Vert\bar f\Vert_\infty \sum_{a=k+1}^\infty \var_a(\phi) + \var_{k(im_0+j+1)}(\bar f) \\
  & \leq \frac{C'_\phi\Vert \bar f\Vert_\infty}{k^{r-1}} + \frac{C_{\bar f}}{(k(im_0+j+1))^{r-1}},
\end{align*}
where we used our assumptions on $\var_j(\phi)$ and $\var_j(\bar f)$. Substituting this bound into \eqref{PiAkdiff2} then provides us with
$$
\big \Vert (T - \Pi_h^{km_0}) (\Pi_h^{km_0})^i \bar f \big \Vert_2
\leq \frac{C'_{\phi, \bar f}}{k^{r-1}},
$$
where $C'_{\phi, \bar f}$ depends only on $\phi$ and $\bar f$. Plugging this bound into \eqref{PiAkdiff} and combining it with \eqref{eq:LnormBd}, we then deduce from \eqref{eq:Literate} that for $n=km_0\ell$ with $k, \ell\in \N$, we have
$$
\Vert \Pi_h^n \bar f\Vert_2 \leq C''_{\phi, \bar f} \big((1-\Cr{const:sm1})^\ell + k^{1-r} \big).
$$
For any $\varepsilon>0$, we can then choose $k=\lfloor n^\beta\rfloor$ with $\beta \in (0,1)$ sufficiently close to $1$ such that if $n=km_0\ell$ for some $\ell\in\N$, then $\Vert \Pi_h^n \bar f\Vert_2 \leq C_{\phi, \bar f} n^{1+\varepsilon -r}$. For $\ell\notin \N$, we can use $\Vert \Pi_h^n \bar f\Vert_2 \leq \Vert \Pi_h^{\lfloor \ell\rfloor km_0} \bar f\Vert_2$ to obtain the same bound with an adjusted $C_{\phi, \bar f}$ that depends only on $\phi$ and $\bar f$.

As the last step, an inspection of the proof shows that if $\phi$ is replaced by $\widetilde \gamma\phi$ for some $\widetilde \gamma\in [0,1]$, then we can choose $C_{\widetilde \gamma\phi, \bar f} := C_{\phi, \bar f}$ for all  $\widetilde \gamma \in [0,1]$.
This concludes the proof of Theorem \ref{T:Pollicott}.
\end{proof}

\section{Thermodynamic formalism for the trapping problem} \label{sec:reform}

In this section, we will rewrite the trapping problem from Section \ref{sec:Intro} in the language of thermodynamic formalism introduced in Section  \ref{sec:thermo}. We then verify that the potential $\varphi$ that arises satisfies the assumptions \ref{item:sumVar}--\ref{item:cont}
so that the results from Section \ref{sec:thermo} can be applied.

\subsection{Reformulating the trapping problem} \label{sec:setup}
Recall from \eqref{eq:condAnnProb2} that
$Z^{\gamma}_{t,X} = \E^\xi \big[ \exp \big\{ - \gamma \int_0^t \xi(s,X_s) \, {\rm d}s \big\}
\big]$. By integrating out the Poisson field $\xi$ and using the Feynman-Kac formula, it was shown in \cite[Section 2.1]{DrGaRaSu-10} that
\begin{equation} \label{eq:ZtX}
Z^{\gamma}_{t,X} = \exp \left\{-\alpha\gamma \int_0^t v(s, X)\, {\rm d}s \right\},
\end{equation}
where
\begin{equation} \label{eq:vX}
v(s, X) := \E^Y_{X_s}\left[\exp \left\{ - \gamma\int_0^s \delta_0(Y_r-X_{s-r})\,{\rm d}r\right\}   \right];
\end{equation}
here, $Y$ is a random walk that starts from $X_s$ and follows the law of a time reversed mobile trap, which coincides with the law of a trap by our symmetry assumption. Note that $Z^\gamma_{t,X}$ can be interpreted as the Gibbs weight of the trajectory $(X_s)_{0\leq s\leq t}$ under $P_t^\gamma$ conditioned on annealed survival up to time $t$, and that $v(s, X)$ depends only on the past increments of $X$ up to time $s$, i.e., $(X_s-X_{s-r})_{0\leq r \leq s}$.  More precisely, we have that
\begin{equation} \label{eq:GibbsRewrite}
P_t^\gamma (X \in \cdot) = \frac{\E_0^X\Big[\exp \Big\{ -\alpha \gamma \int_0^t v(s,X)\, {\rm d}s \Big\} \ind{1}_{X \in \cdot} \Big ]}{\E_0^X\Big[\exp \Big\{ -\alpha \gamma \int_0^t v(s,X)\, {\rm d}s \Big\} \Big]} = \frac{\E_0^X[Z_{t,X}^\gamma \ind{1}_{X \in \cdot} ]}{\E_0^X[Z_{t,X}^\gamma]}, \quad t \ge 0.
\end{equation}

We can also define $P^\gamma_t$ when $X$ has a non-empty past $(X_s)_{s\in (-T, 0]}$, with $T$ possibly equal to $\infty$. A unified treatment is to define $X_s:=*$ for some cemetery state when $s<-T$ and regard $(X_s)_{s<0}$ as a c\`adl\`ag path taking values in $\Z^d \cup \{*\}$. We then generalize
the definition of $v(s, X)$ in \eqref{eq:vX} by setting
\begin{align} \label{eq:vX2}
v(s, X) & := \E^Y_{X_s}\left[\exp \left\{ - \gamma\int_0^\infty \delta_0(Y_r-X_{s-r})\,{\rm d}r\right\}   \right],
\end{align}
with $v(s, X):=0$ if $X_s=*$. We denote the resulting path measure by $P^\gamma_{X, t}$ to emphasize the dependence on the past $(X_s)_{s<0}$.

To study $P^\gamma_t$ via the thermodynamic formalism,
it is necessary to approximate $P^\gamma_t$ by $P^\gamma_{X, t}$ for some c\`adl\`ag $X: (-\infty, 0)\to \Z^d$. A special choice is to sample a random $(X_s)_{s< 0}$ according to a probability distribution $\widetilde \nu$ that we will specify later on, and independently sample $(X_s)_{s\geq 0}$ according to a simple symmetric random walk with $X_0=0$. Denote by $\E^X_{\widetilde \nu}[\cdot]$ the expectation with respect to  such an $(X_s)_{s\in \R}$.
Analogous to $P^\gamma_{X, t}$ for a deterministic past $(X_s)_{s<0}$, we can define the Gibbs measure
\begin{equation} \label{eq:GibbsRewrite2}
P_{\widetilde \nu, t}^\gamma (X \in \cdot) = \frac{\E_{\widetilde \nu}^X[Z_{t, X}^\gamma \ind{1}_{X \in \cdot} ]}{\E_{\widetilde \nu}^X[Z_{t, X}^\gamma]}, \quad t \ge 0.
\end{equation}

We now explain how measures such as $P^\gamma_{\widetilde \nu, t}$ and
$P^\gamma_{X, t}$ with a c\`adl\`ag $X:(-\infty, 0)\to\Z^d$ fit
into the framework of the thermodynamic formalism introduced in Section \ref{sec:thermo}. To start with, we give a precise definition of our alphabet space $S$ as a complete separable metric space equipped with a probability measure $\mu$. For this purpose, let
\begin{equation}\label{eq:pathS}
S:=\{0\}\cup \bigcup_{m\in\N} \Big\{ ((t_1, a_1), \ldots, (t_m, a_m)): 0\leq t_1 \leq \cdots \leq t_m\leq 1, a_1, \ldots, a_m \in \{\pm e_1, \ldots, \pm e_d\} \Big\},
\end{equation}
where $(\pm e_i)_{1\leq i\leq d}$ are the $2d$ unit vectors in $\Z^d$, $0$ encodes the constant path of unit length at the origin in $\Z^d$, and the ordered tuple $((t_1, a_1), \ldots, (t_m, a_m))$ encodes the path that starts from $0$ and makes the increment $a_i$ at time $t_i$ for each $1\leq i\leq m$. Note that we admit multiple nearest neighbor increments at the same time, the sum of which correspond to one aggregated increment. For $x \in S$, let $|x|$ denote the number of increments in $x$. We can then define the  metric
\begin{equation}\label{eq:pathd}
    \d(x, x') := \big ||x|-|x'| \big| + 1_{\{|x|=|x'|\}} \min \Big\{1, \sum_{i=1}^{|x|} |t_i-t'_i| +\sum_{i=1}^{|x|} 1_{\{a_i\neq a'_i\}}  \Big\},
\end{equation}
on $S$, for  $x = ((t_1, a_1), \ldots, (t_m, a_m)), x' = ((t_1', a_1'), \ldots, (t_n', a_n'))\in S.$ It is not hard to check that $(S, \d)$ is a complete separable metric space. As in \eqref{eq:prodMet}, in a slight abuse of notation we will also use $\d$ to denote the metric on $\Sigma=S^{\Z_-}$. This metric will turn out to be convenient to prove the required regularity of the potential $\varphi$ later on. The probability measure $\mu$ on $S$ is simply the one induced by the increments of the simple symmetric random walk on a unit time interval.

Conversely, given any $x\in S$, we can reconstruct a c\`adl\`ag path on the unit time interval starting from the origin, which we will also denote by $x=(x_s)_{s\in [0,1]}$. If $x$ contains multiple increments at the same time, then we simply treat the sum of these increments as a single increment. If there is an increment at time $0$, then we shift the starting point of $x$ accordingly in order to obtain a c\`adl\`ag trajectory. Similarly, from any $\overline{x}=(x_i)_{i\leq -1}\in \Sigma$, we can reconstruct a c\`adl\`ag path $X: (-\infty, 0]\to \Z^d$ with $X_0=0$, whose increments on the time interval $[i,i+1]$ are determined by $x_i$, that is,
$$
X_{i+s} = X_i +x_i(s), \qquad i\in \Z_-,\ s\in (0,1],
$$
which is equivalent to the backward recursive relation
\begin{equation}\label{eq:x2X}
X_{i+s} = X_{i+1} -x_i(1) +x_i(s), \qquad i\in \Z_-,\ s\in (0, 1].
\end{equation}
The conditions that $X_0=0$ and that $X$ has to be c\`adl\`ag then uniquely determine $(X_s)_{s\leq 0}$ from $\bar x=(x_i)_{i\leq -1}$. Let $K$ denote the map from $\overline{x} \in \Sigma$ to $(X_s)_{s\leq 0}$ so that $(K\overline{x})_s=X_s$ for $s\leq 0$. Note that if for each $i\leq -1$, the law of $x_i$ is absolutely continuous with respect to  the law $\mu$ of the increments of a simple symmetric random walk on a unit interval, then almost surely $K$ is invertible, and hence we will frequently identify $\overline{x}\in \Sigma$ with the path $(X_s)_{s\leq 0}=K\overline{x}$.

We can now define a potential $\varphi: \Sigma \to \R$ with
\begin{align} \label{eq:varphiDef}
    \varphi(\overline x) := -\alpha \gamma \int_{-1}^0  v(s, X) \, {\rm d}s, \qquad \bar x\in \Sigma,
\end{align}
where $v(s, X)$ was defined in \eqref{eq:vX2} and $(X_s)_{s\leq 0}=K\overline{x}$. In the definition of $P^\gamma_{\widetilde \nu, t}$ in \eqref{eq:GibbsRewrite2}, if $\overline{x}=(x_i)_{i\leq -1}$ encodes the increments of the past $(X_s)_{s< 0}$, which has law $\nu:= \widetilde \nu \circ K$, and $z_i$ encodes the increments of $X$ on the time interval $[i-1, i]$, $i\in\N$, which are i.i.d.\ with law $\mu$, then we can rewrite
\begin{equation}\label{tZgttX}
Z^\gamma_{t, X} = \exp\Big\{
\sum_{i=1}^t \varphi(\overline{x}z_1\cdots z_i)
\Big\}
\end{equation}
for $t\in \N$.
In the language of the thermodynamic formalism introduced in Section \ref{sec:thermo}, for $t\in \N$, we have
$$
\E^X_{\widetilde \nu}[Z^\gamma_{t, X}] =
\idotsint_{S^t} \exp\Big\{
\sum_{i=1}^t \varphi(\overline{x}z_1\cdots z_i)
\Big\} \,  \nu({\rm d}\overline{x}) \prod_{i=1}^t \mu({\rm d}z_i)
= \langle \nu, L_\varphi^t 1\rangle = \langle (L_\varphi^*)^t \nu, 1\rangle,
$$
where $L_\varphi$ is the Ruelle transfer operator with alphabet space $(S, \d, \mu)$ and potential $\varphi$. Writing $L_\varphi 1=\Pi 1$ and $L_{\varphi}^* \nu = \nu \Pi$ in terms of the non-conservative Markov transition kernel $\Pi(\overline{x}, {\rm d}\overline{y})$, we note that the normalized probability measure on $\Sigma$,
\begin{equation} \label{tildePgamma}
\overline{P}^\gamma_{\nu, t} :=
\frac{(L_\varphi^*)^t\nu}{\langle (L_\varphi^*)^t \nu, 1\rangle} = \frac{\nu \Pi^t}{\langle \nu \Pi^t, 1\rangle},
\end{equation}
is exactly the law of the increments of the time shifted path $(X_{t+s})_{s\leq 0}$, where $X$ is distributed according to the path measure $P_{\widetilde \nu, t}^\gamma(X\in \cdot)$. When $\nu$ is the delta measure at $\overline{w}\in \Sigma$, we will just write $\overline{P}^\gamma_{\overline{w}, t}$. This formulation in the language of thermodynamic formalism will allow us to apply the results from Section \ref{sec:thermo}.

For the Gibbs measure $P^\gamma_t$ with an empty past, or $P^\gamma_{X, t}$ with a finite past $X: [-n, 0)\to\Z^d$, we can still write the partition function $Z^\gamma_{t, X}$ in the same form as in \eqref{tZgttX}, provided we extend the definition of $\varphi$ to $\bigcup_{n=1}^\infty S^n$ by setting
\begin{align} \label{eq:varphiDef2}
    \varphi((x_i)_{-n\leq i\leq -1}) := -\alpha \gamma \int_{-1}^0 v(s, X) \, {\rm d}s, \qquad (x_i)_{-n \leq i\leq -1}\in S^n,
\end{align}
where $(X_s)_{s\in [-n, 0]}$ is defined from the increments $(x_i)_{-n\leq i\leq -1}$ and $X_s=*$ for all $s<-n$.

\subsection{Regularity of the potential} \label{sec:rpot}
We now show that in dimensions $d\geq 5$, the potential $\varphi$ defined in \eqref{eq:varphiDef} satisfies the assumptions \ref{item:sumVar}--\ref{item:cont}, so that Theorem \ref{thm:nuex} can be applied.

\begin{proposition} \label{prop:sumVar}
Let $(S,\d, \mu)$ be defined as in \eqref{eq:pathS}--\eqref{eq:pathd}, and let $\Sigma=S^{\Z_-}$ with metric $\d$ defined as in
\eqref{eq:prodMet}. Let $\varphi$ be
the potential defined in \eqref{eq:varphiDef}
and \eqref{eq:varphiDef2}. Then
for dimension $d\geq 3$, $\varphi$ is bounded and satisfies the following properties:
\begin{enumerate}[label=\alph*)]
  \item \label{item:varn}
     There exists $C \in (0,\infty)$ such that  ${\rm var}_n(\varphi) \le C n^{-\frac{d}{2} +1}$ for all $n\in\N$. In particular, $\varphi$ has summable variation for $d\geq 5$.

    \item \label{item:finInfDiff}
    There exists $C \in (0,\infty)$ such that for all $\overline x \in \Sigma$ we have $|\varphi (\overline x) - \varphi((x_i)_{-n \le i \le -1})| \le C n^{-\frac{d}{2} +1}$ for all $n\in\N$.

    \item \label{item:UC}  $\varphi$ is uniformly continuous on $(\Sigma \cup \bigcup_{n=1}^\infty S^n, \d)$ with $\d$ defined in \eqref{eq:prodMet2}.
\end{enumerate}
\end{proposition}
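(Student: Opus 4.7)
Boundedness is immediate, since $v(s,X)\in [0,1]$ as an expectation of an exponential of a non-positive quantity, giving $|\varphi|\leq \alpha\gamma$. For all three regularity claims, the single unifying estimate is a coupling bound on the Feynman--Kac weight. Given two ``past'' paths $X,\tilde X$ on $(-\infty,0]$ with $X_s=\tilde X_s$ for some $s\in[-1,0]$, starting the random walk $Y$ at the common point and using $|e^{-a}-e^{-b}|\leq |a-b|$ for $a,b\geq 0$, one obtains
\begin{equation*}
|v(s,X)-v(s,\tilde X)| \leq \gamma\, \E^Y_{X_s}\Big[\int_0^\infty \mathbf{1}_{\{X_{s-r}\neq \tilde X_{s-r}\}}\big(\mathbf{1}_{\{Y_r=X_{s-r}\}}+\mathbf{1}_{\{Y_r=\tilde X_{s-r}\}}\big)\,dr\Big].
\end{equation*}
Each of (a), (b), (c) then reduces to controlling this right-hand side under a different hypothesis on the ``bad set'' $\{r:X_{s-r}\neq \tilde X_{s-r}\}$.

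\textbf{Parts (a) and (b).} When $X$ and $\tilde X$ are built from $\overline{x}z_1\cdots z_n$ and $\overline{y}z_1\cdots z_n$ via the backward recursion \eqref{eq:x2X}, they agree on $[-n,0]$, confining the bad set of $r$-values to $r>n+s\geq n-1$. Taking the $Y$-expectation inside and using the standard heat-kernel bound $\sup_{z\in \Z^d}\P^Y_{X_s}(Y_r=z)\leq C r^{-d/2}$ for continuous-time simple random walk,
\begin{equation*}
|v(s,X)-v(s,\tilde X)| \leq 2\gamma\int_{n-1}^\infty C r^{-d/2}\,dr \leq C' n^{-d/2+1},
\end{equation*}
where the integral is finite precisely for $d\geq 3$. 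Integrating over $s\in[-1,0]$ and multiplying by $\alpha\gamma$ yields (a). The truncation case (b) is identical, using the convention $\delta_0(Y_r-*)=0$ so that the truncated path contributes nothing beyond $r=n+s$.

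\textbf{Part (c).} I would combine (b) with a finite-dimensional Lipschitz estimate. Given $\varepsilon>0$, first pick $n_0$ so that $2C n_0^{-d/2+1}<\varepsilon/2$, and use (b) twice to reduce the problem to comparing $\varphi$ on the truncated paths $(x_{-n_0},\ldots,x_{-1})$ and $(y_{-n_0},\ldots,y_{-1})$. The essential observation is the structure of the metric $\d$ on $S$ in \eqref{eq:pathd}: since the ``skeleton mismatch'' term $\big||x|-|x'|\big|+\sum_i \mathbf{1}_{a_i\neq a'_i}$ is an integer, once $\d(x_{-i},y_{-i})<1$ the two elements share the same number and sequence of increments, and $\d(x_{-i},y_{-i})$ equals the total time shift between corresponding jumps. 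Thus, if $\d(\overline{x},\overline{y})<\delta$ with $\delta<2^{-n_0-1}$, then $\d(x_{-i},y_{-i})<2^i\delta<1$ for $1\leq i\leq n_0$, so the trajectories $X,\tilde X$ agree at every integer time in $[-n_0,0]$ and disagree only in short windows around the jumps, with total Lebesgue measure $\leq C' 2^{n_0}\delta$. Splitting $s\in[-1,0]$ into the small set where $X_s\neq \tilde X_s$ (using the trivial bound $|v-\tilde v|\leq 1$ there) and its complement (where the coupling estimate from the first paragraph with the trivial bound $\mathbf{1}_{\{Y_r=\cdot\}}\leq 1$ gives $|v-\tilde v|\leq 2\gamma\cdot (\text{measure of bad set})$), one obtains a bound of the form $C''2^{n_0}\delta$ on $|\varphi(\overline{x})-\varphi(\overline{y})|$, which is $<\varepsilon/2$ for $\delta$ small. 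The mixed case $\overline{x}\in \Sigma$ vs.\ $\overline{y}\in S^n$ follows by inserting a truncation step and invoking (b).

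\textbf{Main obstacle.} The only genuinely delicate point is (c), because the alphabet $S$ is non-compact so one cannot reduce uniform continuity to a compact subset argument. The saving observation is that the metric on $S$ is effectively discrete at scale $1$, which forces any pair of sufficiently close letters to share a skeleton; uniform continuity then follows from a Lipschitz-in-jump-time bound that is uniform in the number of jumps and combines cleanly with the polynomial decay from (b).
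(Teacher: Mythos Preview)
Your proposal is correct and follows essentially the same route as the paper: the Lipschitz coupling bound on $|v(s,X)-v(s,\tilde X)|$ combined with the heat-kernel decay $\sup_z \P^Y(Y_r=z)\le Cr^{-d/2}$ is exactly the paper's argument for (a) and (b), and for (c) the paper likewise fixes $n_0$ via the polynomial decay and then exploits that $\d<1$ on $S$ forces matching jump skeletons, reducing to a Lipschitz-in-jump-times estimate of size $O(2^{n_0}\delta)$. The only cosmetic difference is that you truncate both sequences to $S^{n_0}$ via (b), whereas the paper splices a hybrid $\overline z$ (agreeing with $\overline u$ for $i<-n_0$ and with $\overline w$ for $-n_0\le i\le -1$) and invokes $\mathrm{var}_{n_0}(\varphi)$ from (a); your explicit split of $s\in[-1,0]$ into $\{X_s\neq\tilde X_s\}$ and its complement is, if anything, slightly more careful than the paper's presentation.
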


% \AD{can we improve this bound using more sophisticated considerations? Not if we stick with ${\rm var}_n$. Maybe, since we consider $f(\overline x) = x_0,$ we can have a look at just the $-n$-th coordinate instead of all coordinates with smaller index}

\begin{proof}
\begin{enumerate}[label=\alph*)]
\item
From the definition of $\varphi$ in \eqref{eq:varphiDef} and $v$ in \eqref{eq:vX2}, it is clear that $\Vert \varphi\Vert_\infty\leq \alpha\gamma$. Now let $n\in\N$, and let $\overline{u}=(u_i)_{i\leq -1}, \overline{w}=(w_i)_{i\leq -1}\in \Sigma=S^{\Z_-}$ such that $u_i=w_i$ for $-n\leq i\leq -1$. Denote by $U, W: (-\infty, 0]\to \Z^d$, with $U(0)=W(0)=0$, the c\`adl\`ag paths constructed from $\overline{u}$ and $\overline{w}$ respectively. Then we have
\begin{align*}
   |\varphi(\overline{u}) -\varphi(\overline{w}) |
   & \leq \alpha\gamma \int_{-1}^0 |v(s, U) - v(s, W)| \, {\rm d}s.
\end{align*}
By the definition of $v$ in \eqref{eq:vX2} and the Lipschitz continuity of $e^{-a}$ on $[0, \infty)$, for $s\in (-1, 0)$, we have
\begin{align}
    |v(s, U) - v(s, W)| & =  \Big| \E^Y_{U_s}\left[e^{- \gamma\int_0^\infty \delta_0(Y_r- U_{s-r})\,{\rm d}r}   - e^{ - \gamma\int_0^\infty \delta_0(Y_r-W_{s-r})\,{\rm d}r} \right] \Big| \notag\\
    & \leq \gamma \int_0^\infty  \E^Y_{U_s} \big|\delta_0(Y_r- U_{s-r}) -  \delta_0(Y_r-W_{s-r}) \big| \, {\rm d}r \notag \\
& \leq  \gamma \int_{n+s}^\infty  \big(\P^Y_{U_s}(Y_r=U_{s-r})+ \P^Y_{U_s}(Y_r=W_{s-r})\big) \, {\rm d}r \notag \\
    & \leq \gamma \int_{n+s}^\infty \frac{C}{r^{d/2}} \,{\rm d}r \leq \frac{C\gamma}{n^{d/2-1}}, \label{eq:tvdiff}
\end{align}
where we used the observation that $U(\cdot)=W(\cdot)$ on $[-n, 0]$, $n\in\N$, $s\in (-1,0]$, and a local central limit theorem bound on the transition kernel of $Y$ (see e.g.\ \cite[Theorem 2.1.1]{LaLi-10}). This concludes the proof of \ref{item:varn}.

\item The proof proceeds similarly to that of \ref{item:varn}. Indeed, denote by $X$ the infinite c\`adl\`ag path constructed from $\overline x$, and $X^{(n)}$ the modified trajectory with $X^{(n)}_s=X_s$ for $s\in [-n, 0]$ and $X^{(n)}_s=*$ for all $s<-n$. Then by the definition of $\varphi$ in \eqref{eq:varphiDef} and \eqref{eq:varphiDef2}, we have
\begin{align*}
   |\varphi(\overline{x}) -\varphi((x_i)_{-n\leq i\leq -1})) |
   & \leq \alpha\gamma \int_{-1}^0 |v(s, X) - v(s, X^{(n)})| \, {\rm d}s
\end{align*}
and thus, similarly to \ref{item:varn},
\begin{align*}
    |v(s, X) -  v(s, X^{(n)})| & =  \Big| \E^Y_{X_s}\left[e^{- \gamma\int_0^\infty \delta_0(Y_r- X_{s-r})\,{\rm d}r}   - e^{ - \gamma\int_0^s \delta_0(Y_r-X^{(n)}_{s-r})\,{\rm d}r} \right] \Big| \notag\\
    & \leq \gamma \int_{n+s}^\infty \frac{C}{r^{d/2}} \,{\rm d}r \leq \frac{C\gamma}{n^{d/2-1}},
\end{align*}
which proves the claim.

\item Fix any $\epsilon>0$. Due to \ref{item:varn}, we can first choose $n_0\in\N$ large enough such that ${\rm var}_{n_0}(\varphi)\leq \epsilon/2$. Now for any $\overline{u}, \overline{w}\in \Sigma$ with $\d(\overline{u}, \overline{w})\leq \delta$ for some $\delta>0$ to be chosen later, define $\overline{z}=(z_i)_{i\leq -1}$ with $z_i=u_i$ for $i<-n_0$ and $z_i=w_i$ for $-n_0\leq i\leq -1$.
Then  $|\varphi(\overline{z}) -\varphi(\overline{w})|\leq {\rm var}_{n_0}(\varphi)\leq \epsilon/2$, and by the definition of $(\Sigma, \d)$ in \eqref{eq:prodMet}, we furthermore have
$$
\d(\overline{z}, \overline{u}) \leq \d(\overline{w}, \overline{u}) \leq \delta.
$$
By the definition of $(S, \d)$ in \eqref{eq:pathd}, we can choose $\delta>0$ sufficiently small to ensure that for any choice of $\overline{u}$ and $\overline{w}$ with $\d(\overline{u}, \overline{w})\leq \delta$, and for each $-n_0\leq i\leq -1$, $u_i$ and $w_i$ will have exactly the same number $|u_i|=|w_i|$ of increments  and the same sequence of increments. Furthermore, if we denote the vectors of the times of increments of $u_i$ and $w_i$ by
$t^{u_i}, t^{w_i}\in [0,1]^{|u_i|}$, and  denote their $\ell_1$ distance by $\vert t^{u_i}-t^{w_i}\vert_1$, then (recall \eqref{eq:prodMet}) we can infer that
$$
\sum_{i=-n_0}^{-1} 2^{i} \vert t^{u_i}-t^{w_i}\vert_1 \leq \delta.
$$
If $Z: (-\infty, 0]\to \Z^d$ with $Z(0)=0$ denotes the
c\`adl\`ag path constructed from $\overline{z}$, then we note that $Z$ is simply a time change of $U$ such that $Z(s)=U(s)$ for all $s\leq -n_0$ and such that
$$
\int_{-\infty}^0 1_{\{Z(s)\neq U(s)\}} \,{\rm d}s \leq \sum_{i=-n_0}^{-1} \vert t^{u_i}-t^{w_i}\vert_1 \leq 2^{n_0}\delta.
$$
Substituting this bound into \eqref{eq:tvdiff}  with $\overline{w}$ and $W$ replaced by $\overline{z}$ and $Z$, we then obtain
$$
|\varphi(\overline{u}) - \varphi(\overline{z})| \leq \alpha \gamma^2 2^{n_0}\delta,
$$
which can be made smaller than $\epsilon/2$ by choosing $\delta$ sufficiently small. Together with our choice of $n_0$, this implies that $|\varphi(\overline{u}) - \varphi(\overline{w})| \leq \epsilon$ uniformly in $\d(\overline{u}, \overline{w})\leq \delta$, which concludes the proof of the uniform continuity of $\varphi$ on $\Sigma$. When either $\overline{u}$ or $\overline{w}$, or both are replaced by elements of $\bigcup_{n\in\N}S^n$, the proof proceeds analogously.
\end{enumerate}
\end{proof}

\section{Proof of Theorem \ref{thm:fCLT}} \label{sec:fCLT}
Having set up our trapping problem in the language of thermodynamic formalism, we are now prepared to apply results from Section \ref{sec:thermo} to deduce Theorem \ref{thm:fCLT}. The coarse proof strategy is to compare the increments of the path under the Gibbs measure $P^\gamma_t$, defined in \eqref{eq:Gibbs}, with a stationary process such that we can apply the invariance principle in Corollary \ref{C:invariance}.

We verified in Proposition \ref{prop:sumVar} that the potential $\varphi$ defined in \eqref{eq:varphiDef} satisfies assumptions \ref{item:sumVar}--\ref{item:cont}. Therefore, we can apply Theorem \ref{thm:nuex} and identify the eigenvalue $\lambda>0$,  positive eigenfunction $h$, and eigenmeasure $\nu$ (which are all uniquely determined) for the transfer operator $L_\varphi$, which is associated with the non-conservative Markov transition kernel $\Pi(\overline{x}, {\rm d}\overline{y}) = 1_{\{\overline{y} =\overline{x}z\}} e^{\varphi(\overline{x}z)} \mu({\rm d}z)$. Following Remark \ref{R:Pih}, we can then define the associated conservative Markov transition kernel $\Pi_h(\overline{x}, {\rm d}\overline{y}) = \frac{h(\overline{y})}{\lambda h(\overline{x})} \Pi(\overline{x}, {\rm d}\overline{y})$ such that $\nu_h({\rm d}\overline{x}) = h(\overline{x}) \, \nu({\rm d}\overline{x})$ is the unique stationary distribution for a $\Sigma$-valued Markov chain with transition kernel $\Pi_h$.

Recall the path measure $P_{\widetilde \nu, t}^\gamma (X \in \cdot)$ from \eqref{eq:GibbsRewrite2}, where the reference measure for $X$ is chosen such that its past $(X_s)_{s\leq 0}$ follows a distribution $\widetilde \nu$ yet to be determined. Let $\overline{x}=(x_i)_{i\leq -1}\in \Sigma$
be the increments of $(X_s)_{s\leq 0}$ with induced law $\nu:= \widetilde \nu \circ K = \widetilde \nu \circ (K^{-1})^{-1}$, where $K$ was defined as the map that constructs a c\`adl\`ag path $(X_s)_{s\leq 0}$ from $\overline{x}\in \Sigma$ in \eqref{eq:x2X}. Recall furthermore from \eqref{tildePgamma} that if $X$ is distributed according to $P_{\widetilde \nu, t}^\gamma(X\in \cdot)$ for some $t\in\N$, then the increments
of the shifted path $(X_{t+s})_{s\leq 0}$ follow the distribution
\begin{equation}\label{tildePgamma2}
\overline{P}^\gamma_{\nu, t} ({\rm d}\overline{y}) := \frac{(\nu \Pi^t)({\rm d}\overline{y})}{\langle \nu, \Pi^t1\rangle}.
\end{equation}
We now choose $\nu$ to be precisely the eigenmeasure $\nu$ of $L_\varphi$, which in turn determines the law $\widetilde \nu = \nu \circ K^{-1}$ that the path measure $P^\gamma_{\widetilde \nu, t}$ depends upon. Again by Remark \ref{R:Pih} (and using the notation introduced therein), if $\Pi_h$ denotes the $h$-transformed conservative Markov operator  and $\frac{1}{h}(\bar y):= \frac{1}{h(\bar y)}$, then we have
\begin{align}\label{statred1}
\begin{split}
\overline{P}^\gamma_{\nu, t} ({\rm d}\overline{y}) &= \frac{\int \nu({\rm d}\overline{x}) h(\overline{x})  \Pi_h^t(\overline{x}, {\rm d} \overline{y})\frac{1}{h(\overline{y})}}{
\int \nu({\rm d}\overline{x}) h(\overline{x}) (\Pi_h^t (\frac{1}{h}))(\overline{x})} = \frac{\int \nu_h({\rm d}\overline{x})
 \Pi_h^t(\overline{x}, {\rm d} \overline{y})\frac{1}{h(\overline{y})}}{
\int \nu_h({\rm d}\overline{x})(\Pi_h^t (\frac{1}{h}))(\overline{x})} \\
&= \frac{\frac{1}{h(\overline{y})} \nu_h({\rm d}\overline{y})}{
\int \nu_h({\rm d}\overline{x})(\Pi_h^t (\frac{1}{h}))(\overline{x})},
\end{split}
\end{align}
where we used $\int \nu_h({\rm d}\overline{x}) \Pi_h^t(\overline{x}, {\rm d}\overline{y}) = \nu_h({\rm d}\overline{y})$. Further note that by \eqref{erg}, $(\Pi_h^t (\frac{1}{h}))(\overline{x})\to 1$ for every $\overline{x}\in \Sigma$ as $t\to\infty$, and hence the denominator in \eqref{statred1} tends to $1$.

If we could remove the factor $\frac{1}{h(\overline{y})}$ from the right-hand side of \eqref{statred1}, then the measure $\overline{P}^\gamma_{\nu, t} ({\rm d}\overline{y})$ would become
essentially $\nu_h$, which is stationary with respect to  the shift map $\theta \overline{x} =(x_{-1+i})_{i\leq -1}$. We could then apply the invariance principle in Corollary \ref{C:invariance}. We note that in some sense the factor $\frac{1}{h(\overline{y})}$ can be regarded as a boundary effect, similar to the choice of the past $(X_s)_{s\leq 0}$ in the definition of the path measures $P^\gamma_t$ and $P^\gamma_{\widetilde \nu, t}$ in Section \ref{sec:setup}. To treat both left and right boundary effects simultaneously and establish that they are negligible, we fix $1\ll L=L(t)\ll t$, say $L=\lceil \log t\rceil$, and compare the laws of the path increments under $P^\gamma_t$ and $P^\gamma_{\widetilde \nu, t}$ on the time interval $[L, t-L]$. Theorem \ref{thm:fCLT} will then be a consequence of the following findings.

The first result shows that under any Gibbs measure $P^\gamma_{\varsigma, t}$ as defined in \eqref{eq:GibbsRewrite2},
where the past $X: (-\infty, 0)\to \Z^d \cup\{*\}$ has (possibly degenerate) distribution $\varsigma$, the law of the path increments on any unit time interval is absolutely continuous with respect to  the law $\mu$ of the increments of a simple symmetric random walk, and furthermore the density is uniformly bounded from above. The same is true for the marginal distributions of the $\Sigma$-valued Markov process with transition kernel $\Pi_h$ and arbitrary initial state.

\begin{lemma}\label{L:abscon}
Let $\varsigma$ be the law of a random c\`adl\`ag path $(X_s)_{s<0}$ taking values in $\Z^d\cup\{*\}$, where $*$ is a cemetery state.
For $t>0$, let $P^\gamma_{\varsigma, t}$ be the Gibbs measure defined as in \eqref{eq:GibbsRewrite2}, with $\varsigma$ instead of $\widetilde \nu$. For any  $s\in [0, t]$, if $\rho_s$ denotes the law of the increments $(X_{s+r}-X_s)_{r\in [0,1]}$ under $P^\gamma_{\varsigma, t}(X\in \cdot)$, then
there exists $D\in (0,\infty)$ such that uniformly in $\varsigma$, $t>0$,  $s\in [0, t]$, and $z\in S$, we have
\begin{equation}\label{eq:rhosmu}
\frac{{\rm d}\rho_s}{{\rm d} \mu}(z) \leq D.
\end{equation}
If $\overline{X}_n=(x_i)_{i<n}$, $n\geq 0$,  is a $\Sigma$-valued Markov chain with transition kernel $\Pi_h$, and for $0\leq j< n$, $\rho_j$
denotes the marginal law of $x_j$ (does not depend on $n>j$), then the same bound $\frac{{\rm d}\rho_j}{{\rm d} \mu}(z)\leq D$ holds uniformly in the initial condition $\overline{X}_0=(x_i)_{i\leq -1}$, $j\geq 0$, and $z\in S$.
\end{lemma}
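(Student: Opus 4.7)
The plan for the first bound is to express $d\rho_s/d\mu$ as a ratio of conditional expectations under the reference measure and to bound it uniformly via a coupling argument. Writing $z^{(s)} := (X_{s+r}-X_s)_{r \in [0,1]}$ for the increment in question, note that under the reference law $z^{(s)}$ is independent of the past $X^{<s} := (X_r)_{r \le s}$ and of the forward unit-interval increments $\tilde X := (X_{s+1+r}-X_{s+1})_{r \ge 0}$, with marginal law $\mu$. Setting $G(z) := \E^X_\varsigma[Z^\gamma_{t,X} \mid z^{(s)} = z]$ and $\bar G := \int G\,d\mu$, we get $d\rho_s/d\mu = G/\bar G$, so it is enough to show $G(z) \le D\, G(z')$ uniformly in $z,z' \in S$, $\varsigma$, $s$, and $t$, since integrating over $z'$ against $\mu$ then yields the claim.

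To establish this ratio bound, I would hold $X^{<s}$ and $\tilde X$ fixed and compare $Z^\gamma_{t,X(z)}$ with $Z^\gamma_{t,X(z')}$ pointwise. Using \eqref{eq:ZtX}, the Gibbs weight factors as $A_1 \cdot A_2(z) \cdot A_3(z)$, where $A_1, A_2, A_3$ correspond to the integrals of $-\alpha\gamma v(r,X)$ over $[0,s]$, $[s,s+1]$, and $[s+1,t]$, respectively. The factor $A_1$ is independent of $z$, and $A_2(z) \in [e^{-\alpha\gamma},1]$ since $v \in [0,1]$, so $A_2(z)/A_2(z') \le e^{\alpha\gamma}$. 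The heart of the argument is the bound on $A_3$: by translation invariance of the trap walk $Y$, $v(r,X)$ depends on $X$ only through the backward increments $(X_r - X_{r-u})_{u \ge 0}$, and a direct inspection shows that for $X(z)$ and $X(z')$ these coincide on $u \le r-s-1$ and may only differ for larger $u$ by a bounded shift. Combining $|e^{-a}-e^{-b}| \le |a-b|$ with the uniform local CLT bound $\P(Y_u = x) \le Cu^{-d/2}$, in the spirit of Proposition~\ref{prop:sumVar}, I expect
\[
|v(r,X(z)) - v(r,X(z'))| \le C_1 \bigl(1 \vee (r-s-1)\bigr)^{1-d/2},
\]
which is integrable in $r$ over $[s+1,t]$ uniformly in $t$ precisely when $d \ge 5$, yielding a constant $C_0$ independent of $s, t, z, z', X^{<s}, \tilde X$. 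This gives $A_3(z)/A_3(z') \le e^{\alpha\gamma C_0}$, and combining with the trivial bound on $A_2$ gives $G(z) \le D\, G(z')$ with $D := e^{\alpha\gamma(1+C_0)}$.

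For the second part, the plan is a direct calculation. Using the explicit form of $\Pi_h$ from \eqref{eq:hTrans} and iterating, the marginal density of the newly appended symbol $x_n$ with respect to $\mu$ equals $\E\bigl[h(\overline{X}_{n-1}z)\, e^{\varphi(\overline{X}_{n-1}z)} / (\lambda h(\overline{X}_{n-1}))\bigr]$, where the expectation is over the chain history started from $\overline{X}_0$. By Theorem~\ref{thm:nuex}~\ref{item:harmonic} and Claim~\ref{cl:cbd}~\ref{item1}, $h \in [M_1^{-1}, M_1]$; together with $\Vert\varphi\Vert_\infty < \infty$ and $\lambda > 0$, this yields the required uniform bound. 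The main obstacle throughout is the decay estimate on $|v(r,X(z)) - v(r,X(z'))|$ and its integrability, which is exactly what forces the dimensional restriction $d \ge 5$ and mirrors the calculation behind Proposition~\ref{prop:sumVar}.
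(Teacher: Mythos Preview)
Your proposal is correct and follows essentially the same route as the paper: condition on the past before time $s$ and the increments after time $s+1$, then compare the Gibbs weight for a generic insert $z$ against a reference insert (you take an arbitrary $z'$; the paper takes the constant path $0\in S$), using Lipschitz continuity of the exponential together with the uniform local CLT bound $\P(Y_u=x)\le Cu^{-d/2}$ to control $\int_{s+1}^t |v(r,X(z))-v(r,X(z'))|\,{\rm d}r$ by a constant when $d\ge 5$. One small imprecision: the difference in the backward increments for $u>r-s-1$ is not a \emph{bounded} shift (since $z(1)$ can be large), but this is harmless because the local CLT bound is uniform in the target point, which is exactly what you use. Your treatment of the $\Pi_h$ case via the uniform bounds on $h$ and $\varphi$ is also the same as the paper's.
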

\begin{proof}
It suffices to show that conditioned on $Y^-:=(X_r)_{r<s}$ and $Y^+:=(X_{s+1+r}-X_{s+1})_{r\geq 0}$, the conditional law $P^\gamma_{\varsigma, t} ((X_{s+r}-X_s)_{r\in [0,1]}\in \cdot |Y^-, Y^+)$ is absolutely continuous with respect to  $\mu$ with a uniformly bounded density. By the definition of the Gibbs measure $P^\gamma_{\varsigma, t}$, we note that
\begin{align*}
   P^\gamma_{\varsigma, t} \big((X_{s+r}-X_s)_{r\in [0,1]}\in {\rm d}z |Y^-, Y^+\big) =  \frac{1}{Z_1} e^{-\alpha\gamma \int_s^t v(r, X)\, {\rm d}r} \, \mu({\rm d}z), \quad z\in S,
\end{align*}
where the path $X$ is constructed by concatenating $Y^-$, $z\in S$, $Y^+$, and shifted such that $X_{s+1}=0$, while $v$ was defined in \eqref{eq:vX2}, and $Z_1$ is a normalizing constant that
depends on $Y^+$ and $Y^-$. Let $X'$ be the path constructed by concatenating $Y^-$, the constant path $0\in S$ and $Y^+$, and shifted such that $X'_{s+1}=0$, so that $X_r=X'_r$ for all $r\geq s+1$. We can change the normalizing constant and rewrite
\begin{align*}
   P^\gamma_{\varsigma, t} \big((X_{s+r}-X_s)_{r\in [0,1]}\in {\rm d}z |Y^-, Y^+ \big) =  \frac{1}{Z_2} e^{-\alpha\gamma \int_s^t (v(r, X)- v(r, X')) \, {\rm d}r} \mu({\rm d}z).
\end{align*}
where we note that for $d>4$
\begin{align*}
\Big| \int_s^t (v(r, X)- v(r, X'))\, {\rm d}r \Big|  < \infty.
\end{align*}
Indeed,  with a similar reasoning as in \eqref{eq:tvdiff},  we have
\begin{align*}
\Big| \int_s^t (v(r, X)- v(r, X'))\, {\rm d}r \Big|
&\leq 2 + \int_{s+2}^\infty \big| v(r, X)- v(r, X')\big| \, {\rm d}r \\
 & =  2+\int_{s+2}^\infty \Big| \E^Y_{X_r}\left[e^{- \gamma\int_0^\infty \delta_0(Y_u- X_{r-u})\,{\rm d}u}   - e^{ - \gamma\int_0^\infty \delta_0(Y_u-X'_{r-u})\,{\rm d}u} \right] \Big|\, {\rm d}r \notag\\
    & \leq 2+ \gamma \int_{s+2}^\infty \int_{r-s-1}^\infty \big(\P^Y_{X_r}(Y_u=X_{r-u}) + \P^Y_{X_r}(Y_u=X'_{r-u})\big) \, {\rm d}u \, {\rm d}r \notag \\
    & \le 2+ \gamma \int_{s+2}^\infty \frac{C}{(r-s-1)^{d/2-1}}\, {\rm d}r  \leq  C(2+\gamma) < \infty,
    \notag
\end{align*}
with $Y$ denoting an independent random walk as in \eqref{eq:vX} (not to be confused with $Y^+$ and $Y^-$). This bound is uniform in $s<t$ and $X$, which implies \eqref{eq:rhosmu}. The analogous result for the marginal law of the components of $\overline{X}_n$ follows readily from the definition of $\Pi_h$ in \eqref{eq:hTrans}.
\end{proof}

Lemma \ref{L:abscon} suggests that discarding the part of the path $X$ in the time intervals $[0, L]$ and $[t-L, t]$ has a negligible effect in the diffusive scaling limit. It also shows that the fluctuations of the path between consecutive integer times are negligible. More precisely, we have the following corollary.

\begin{corollary}\label{C:approx}
Assume the same setting as in Theorem \ref{thm:fCLT}, and let $(X_s)_{s\in [0,t]}$ be sampled according to $P^\gamma_t(X\in \cdot)$. Let $L:=\lceil \log t\rceil$, and for $0\leq s\leq \lfloor t\rfloor -2L=: t_L$, define $Y_s:= X_{L+\lfloor s\rfloor} - X_L$. Then $(Y_{st_L}/\sqrt{t_L})_{s\in [0,1]}$ and $(X_{st}/\sqrt{t})_{s\in [0,1]}$ converge in distribution to the same limit as $t\to\infty$, if the weak limit exists.
\end{corollary}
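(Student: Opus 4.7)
The plan is to show that whenever either of the processes
$(X_{st}/\sqrt{t})_{s\in[0,1]}$ or $(Y_{st_L}/\sqrt{t_L})_{s\in[0,1]}$
converges weakly in the Skorohod topology, both converge to the same
(automatically continuous) limit. I will split this into two steps: first
replace $(X_{st}/\sqrt{t})$ by its integer-time sampling
$\widehat X^{(t)}_s := X_{\lfloor st\rfloor}/\sqrt{t}$, and then rewrite
$Y_{\cdot\, t_L}/\sqrt{t_L}$ as a small perturbation of $\widehat X^{(t)}$
composed with a time change that converges to the identity.

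For the first step, observe the sup-norm bound
\begin{equation*}
\sup_{s\in[0,1]}\Big|\frac{X_{st}}{\sqrt{t}} - \widehat X^{(t)}_s\Big|
\leq \frac{1}{\sqrt{t}}\max_{0\leq n < \lfloor t\rfloor}\sup_{r\in[0,1)}|X_{n+r}-X_n|.
\end{equation*}
By Lemma~\ref{L:abscon}, for each $n$ the law of $(X_{n+r}-X_n)_{r\in[0,1]}$
under $P^\gamma_t$ has a density bounded by $D$ with respect to $\mu$, under
which the oscillation in a unit interval is at most the number of jumps, a
Poisson$(\kappa)$ random variable. Super-exponential Poisson tails and a
union bound over $n<\lfloor t\rfloor$ (with threshold $(\log t)^2$) give a
maximal oscillation of order $(\log t)^2$, so the right-hand side is of
order $(\log t)^2/\sqrt{t}$ in probability and thus tends to $0$.

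For the second step, set $u_s := (L+st_L)/t$. Since $L\in\N$, we have
$\lfloor u_s t\rfloor = L+\lfloor st_L\rfloor$, hence
$\widehat X^{(t)}_{u_s} = X_{L+\lfloor st_L\rfloor}/\sqrt{t}$, and
\begin{equation*}
\frac{Y_{st_L}}{\sqrt{t_L}}
= \frac{X_{L+\lfloor st_L\rfloor}-X_L}{\sqrt{t_L}}
= \frac{\sqrt{t}}{\sqrt{t_L}}\,\widehat X^{(t)}_{u_s} - \frac{X_L}{\sqrt{t_L}}.
\end{equation*}
Three elementary facts then complete the comparison:
(i) $\sup_{s\in[0,1]}|u_s-s|\leq (L+|t-t_L|)/t = O(L/t)\to 0$;
(ii) $\sqrt{t}/\sqrt{t_L}\to 1$ deterministically;
(iii) by Lemma~\ref{L:abscon} the $P^\gamma_t$-expectation of each
increment $|X_k-X_{k-1}|$ is bounded uniformly by a constant $C$, so the
triangle inequality yields $\E|X_L|\leq CL = O(\log t)$ and hence
$|X_L|/\sqrt{t_L}\to 0$ in probability.

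The point requiring the most care is that the comparison between
$\widehat X^{(t)}_{u_\cdot}$ and $\widehat X^{(t)}_\cdot$ involves a time
change, which is not a continuous operation on Skorohod space in general.
It becomes routine here because any subsequential weak limit of
$(X_{st}/\sqrt{t})$ must lie in $C([0,1],\R^d)$ almost surely---the
prelimit has jumps of size $1/\sqrt{t}\to 0$---and Skorohod convergence to
a continuous limit is equivalent to uniform convergence. On a
Skorohod-representation probability space, uniform continuity of the limit
turns (i) into a vanishing supremum error, and Slutsky's theorem combines
(i)--(iii) to transfer weak convergence between $(X_{st}/\sqrt{t})$ and
$(Y_{st_L}/\sqrt{t_L})$ in both directions, yielding the claim.
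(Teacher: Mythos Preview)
Your proof is correct and follows essentially the same approach as the paper: both rely on Lemma~\ref{L:abscon} combined with Poisson tail bounds to control the oscillation of $X$ over unit time intervals (the paper uses the threshold $t^{1/4}$, you use $(\log t)^2$), after which the boundary contributions on $[0,L]$ and $[t-L,t]$ and the integer-time discretization are negligible on the diffusive scale. Your write-up is in fact more detailed than the paper's, which leaves the time-change step and the continuity of subsequential limits implicit; your explicit handling of these points via Skorohod representation and Slutsky is a welcome addition.
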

\begin{proof} Since $\lim_{t\to\infty} t_L/t = 1$, changing the space time normalizing factors from $\sqrt{t_L}$ and $t_L$ to $\sqrt{t}$ and $t$ has no effect asymptotically. To show that discarding $(X_s)_{s\in [0, L]}$ and $(X_s)_{s\in [t-L, t]}$ and replacing $X_{L+s}-X_L$ by $X_{L+\lfloor s\rfloor}-X_L$ has no effect on the diffusive scaling limit, we start with observing that by Lemma \ref{L:abscon}, for $s \in [0,t]$
\begin{align*}
&P^\gamma_t \Big(\sup_{r\in [0, 1]} \Vert X_{s+r}-X_s\Vert_\infty \geq t^{1/4}\Big)
\quad \leq D \mu\Big(x\in S: \sup_{r\in [0, 1]} \Vert x(r)\Vert_\infty \geq t^{1/4}\Big)
\end{align*}
    We will use  a large deviation bound on the number of random walk jumps in a unit time interval to estimate the right hand side above. In order for the event
$$
\left \{x \in S: \sup_{r\in [0, 1]} \Vert x(r)\Vert_\infty \geq t^{1/4} \right \}
$$
to occur, the number of jumps of the continuous time random walk $x$ in a unit interval  under $\mu$ has to be at least $t^{1/4}.$ This number is Poisson distributed with parameter $1$ under $\mu$. Consequently, tail bounds for Poisson random variables, implies that
$$D \mu\Big(x\in S: \sup_{r\in [0, 1]} \Vert x(r)\Vert_\infty \geq t^{1/4}\Big) \leq e^{- c t^{1/4}\log t}.$$
 Finaly, a union bound over all such events with integer $s\in [0, t]$ then implies that if their limits exist, the finite dimensional distributions of $(Y_{st_L}/\sqrt{t_L})_{s\in [0,1]}$ and $(X_{st}/\sqrt{t})_{s\in [0,1]}$ converge to the same limit. In combination with Skorokhod's theorem, this establishes Corollary \ref{C:approx}.
\end{proof}

We will now state two lemmas (Lemma \ref{L:approx2} and Lemma \ref{L:inv}) and Proposition \ref{P:posgamma} from which Theorem \ref{thm:fCLT} will follow.  Assuming these we shall prove Theorem \ref{thm:fCLT} and subsequently prove them in Section \ref{sec:twolem}.

Our first lemma states that the law of $Y^{(t)}:=(Y_s)_{s\in [0, t_L]}$ introduced in Corollary \ref{C:approx}, or rather, its increments, is close in distribution to the stationary process associated with the $\Sigma$-valued Markov chain with transition kernel $\Pi_h$ and stationary distribution $\nu_h$.

\begin{lemma}\label{L:approx2} Assume $d\geq 5$ and
let $\eta:=(\eta_i)_{i\in\Z}$ be a stationary process such that for each $n\in\Z$, $(\eta_{n+i})_{i\leq -1}$ has distribution $\nu_h$. For $t>0$, let $X$, $L$, and $t_L$ be as in Corollary \ref{C:approx}. Define $\overline{Y}_+^{(t)} :=(y_i)_{0\leq i\leq t_L-1}$ such that $y_i=(X_{L+i+s}-X_{L+i})_{s\in [0,1]}\in S$, and similarly denote $\eta^{(t)}=(\eta_{i})_{0\leq i\leq t_L-1}$.
Then
\begin{equation}\label{eq:TV}
\d_{\rm TV} \big( {\mathcal L}(\eta^{(t)}), {\mathcal L}(\overline{Y}_+^{(t)}) \big ) \to 0 \qquad \mbox{as} \quad t\to\infty,
\end{equation}
where $\mathcal L(\cdot)$ denotes the law of a random variable, and $\d_{\rm TV}$ denotes the total variation distance.
\end{lemma}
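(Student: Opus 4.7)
My plan is to compare $\mathcal{L}(\overline{Y}_+^{(t)})$ under $P_t^\gamma$ with $\mathcal{L}(\eta^{(t)})$ by passing through the auxiliary path measure $P_{\widetilde{\nu},t}^\gamma$ from \eqref{eq:GibbsRewrite2}, where $\widetilde{\nu} := \nu\circ K^{-1}$ and $\nu$ is the Ruelle eigenmeasure of Theorem~\ref{thm:nuex}~\ref{item:fixedPoint}. The key algebraic input is the $h$-transform identity $e^{\varphi(\overline{x}z)} = \lambda\,\bigl(h(\overline{x})/h(\overline{x}z)\bigr)\,\pi_h(\overline{x},z)$, where $\pi_h(\overline{x},z) := h(\overline{x}z)e^{\varphi(\overline{x}z)}/(\lambda h(\overline{x}))$ is the density of $\Pi_h(\overline{x},\cdot)$ with respect to $\mu$. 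Telescoping this identity over the state trajectory $\overline{X}_n := \overline{x}u_1\cdots u_n$ for $0\leq n\leq \lfloor t\rfloor$ rewrites the joint density under $P_{\widetilde{\nu},t}^\gamma$ as that of the stationary Markov chain $\mathbb{Q}$ with kernel $\Pi_h$ and invariant measure $\nu_h = h\cdot\nu$, reweighted by the single factor $1/h(\overline{X}_{\lfloor t\rfloor})$; stationarity of $\nu_h$ makes the normalising constant $\langle \nu_h, \Pi_h^{\lfloor t\rfloor}(1/h)\rangle = \langle\nu,1\rangle = 1$.

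Using this representation, I first establish $\d_{\rm TV}(\mathcal{L}(\overline{Y}_+^{(t)})_{P_{\widetilde{\nu},t}^\gamma},\mathcal{L}(\eta^{(t)}))\to 0$. Marginalising on the middle block $\overline{Y}_+^{(t)}$, the Radon--Nikodym derivative with respect to $\mathcal{L}(\eta^{(t)})$ equals $\E_\mathbb{Q}[\Pi_h^{L'}(1/h)(\overline{X}_{L+t_L})\mid\overline{Y}_+^{(t)}]$ with $L' := \lfloor t\rfloor - L - t_L$ of order $L = \lceil\log t\rceil$; since $1/h$ is uniformly bounded (Theorem~\ref{thm:nuex}~\ref{item:harmonic} and Claim~\ref{cl:cbd}~\ref{item1}) and $\Pi_h^{L'}(1/h)\to 1$ pointwise by Theorem~\ref{thm:nuex}~\ref{item:sgConv}, dominated convergence combined with $\overline{X}_{L+t_L}\sim \nu_h$ under $\mathbb{Q}$ bounds the TV distance by $\tfrac{1}{2}\int|\Pi_h^{L'}(1/h)(\overline{x}) - 1|\,\nu_h({\rm d}\overline{x})\to 0$.

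It then suffices to show $\d_{\rm TV}(\mathcal{L}(\overline{Y}_+^{(t)})_{P_t^\gamma},\mathcal{L}(\overline{Y}_+^{(t)})_{P_{\widetilde{\nu},t}^\gamma})\to 0$. By Proposition~\ref{prop:sumVar}~\ref{item:finInfDiff}, $|\varphi(\overline{x}u_1\cdots u_i)-\varphi((u_1,\ldots,u_i))|\leq Ci^{-d/2+1}$, summable in $i$ for $d\geq 5$. Consequently the Radon--Nikodym derivative, which equals a normalising constant times $\int_\Sigma \nu({\rm d}\overline{x})\exp\bigl(\sum_{i=1}^{\lfloor t\rfloor}[\varphi(\overline{x}u_1\cdots u_i)-\varphi((u_1,\ldots,u_i))]\bigr)$, factors up to a uniform $1+O(L^{-d/2+2}) = 1+o(1)$ error from the tail $\sum_{i>L}$ as a bounded function $F(\alpha)$ of the left-boundary block $\alpha := (u_1,\ldots,u_L)$ alone; hence the conditional laws $P_t^\gamma(\beta\mid\alpha)$ and $P_{\widetilde{\nu},t}^\gamma(\beta\mid\alpha)$ agree up to $1+o(1)$ where $\beta := \overline{Y}_+^{(t)}$. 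A parallel application of the telescoping $h$-transform identity shows that $P_t^\gamma(\beta\mid\alpha)$ is, up to a right-boundary factor $\Pi_h^{L'}(1/h)\to 1$, the law of $t_L$ consecutive $\Pi_h$-chain increments started from the ``extended state'' $\overline{x}_0 u_1\cdots u_L$ for any fixed reference past $\overline{x}_0\in\Sigma$; the ergodicity in Theorem~\ref{thm:nuex}~\ref{item:sgConv}, combined with Scheff\'e's lemma and the uniform density bound of Lemma~\ref{L:abscon}, upgrades this pointwise Markov-chain convergence to TV convergence, and integrating against the $\alpha$-marginal completes the proof.

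The main obstacle is making the last step uniform in $\alpha$: Theorem~\ref{thm:nuex}~\ref{item:sgConv} provides pointwise ergodic convergence uniform only on compact subsets of $\Sigma$, whereas $\alpha$ a priori ranges over the non-compact $S^L$. A tightness argument is therefore required to truncate $\alpha$ to a sequence of compacts carrying almost all $P_t^\gamma$-mass, which in turn relies on the uniform coordinatewise density bound from Lemma~\ref{L:abscon}. The dimensional assumption $d\geq 5$ is used throughout to make both the tail $\sum_{i>L}\var_i(\varphi) = O(L^{-d/2+2})$ (governing the passage from the truncated potential to $\varphi$ on extensions of the past) and the boundary factor $\Pi_h^{L'}(1/h)-1$ vanish over the buffer length $L = \lceil\log t\rceil$.
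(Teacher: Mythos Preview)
Your Step~1 is correct and essentially matches the paper's treatment of the right boundary: under $P_{\widetilde\nu,t}^\gamma$ the $h$-transform yields the stationary $\Pi_h$-chain reweighted by $1/h(\overline X_{\lfloor t\rfloor})$, and marginalising onto the middle block leaves the factor $\Pi_h^{L'}(1/h)$, which tends to~$1$ in $L^1(\nu_h)$ by Theorem~\ref{thm:nuex}~\ref{item:sgConv} and dominated convergence. The paper obtains the same conclusion (its display below \eqref{stared4}).

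Step~2, however, has a genuine gap. You correctly observe that the full-path Radon--Nikodym derivative of $P_{\widetilde\nu,t}^\gamma$ with respect to $P_t^\gamma$ equals $F(\alpha)(1+o(1))$ via Proposition~\ref{prop:sumVar}~\ref{item:finInfDiff}. But this only yields
\[
P_{\widetilde\nu,t}^\gamma(\beta)\;=\;(1+o(1))\int F(\alpha)\,P_t^\gamma({\rm d}\alpha)\,P_t^\gamma(\beta\mid\alpha)\Big/\!\!\int F(\alpha)\,P_t^\gamma({\rm d}\alpha),
\]
which differs from $P_t^\gamma(\beta)=\int P_t^\gamma({\rm d}\alpha)\,P_t^\gamma(\beta\mid\alpha)$ unless $P_t^\gamma(\beta\mid\alpha)$ is (approximately) independent of $\alpha$. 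Your attempt to establish this independence via ``ergodicity $+$ Scheff\'e $+$ Lemma~\ref{L:abscon}'' does not work as stated. Theorem~\ref{thm:nuex}~\ref{item:sgConv} gives only weak convergence $\Pi_h^n(\overline x,\cdot)\Rightarrow\nu_h$; it says nothing about total variation on the growing block $S^{t_L}$. Scheff\'e's lemma applies to densities on a \emph{fixed} measure space, whereas your target measures live on $S^{t_L}$ with $t_L\to\infty$; and the coordinatewise density bound of Lemma~\ref{L:abscon} does not control the joint density of $t_L$ coordinates. Moreover, there is no mixing buffer between the state $\overline x_0\alpha$ and the block $\beta$: conditionally on $\alpha$, the chain starts immediately at $\overline x_0\alpha$, so ergodicity has no room to act. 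Your ``obstacle'' paragraph misdiagnoses the difficulty as one of uniformity over non-compact $\alpha$; the real issue is upgrading weak convergence to TV convergence on a space of growing dimension.

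The paper resolves this with Lemma~\ref{L:PnuTV}: the map $\overline w\mapsto \Pi_h^t(\overline w,\cdot)\circ\pi_{-t,0}^{-1}$ is uniformly (in $t$) TV-continuous in $\overline w$, proved by a direct Radon--Nikodym bound using the summable variation of $\varphi$ and the regularity of $h$ (cf.\ \eqref{eq:Pwnconv}). Together with Skorohod representation, this upgrades weak convergence of initial distributions to the required TV convergence on the block. The paper also organises the left boundary differently: rather than comparing $P_t^\gamma$ to $P_{\widetilde\nu,t}^\gamma$, it first attaches an arbitrary infinite past at cost $O(T^{-d/2+2})$ via Lemma~\ref{L:PXTV}, and then uses the decomposition \eqref{stared3} to exhibit the law of the state at time $L$ as a measure $\nu_{t,L,\overline x}$ that converges \emph{weakly} to $\nu_h$; Lemma~\ref{L:PnuTV} then does the rest. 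Your Step~2 would go through if you replaced the Scheff\'e appeal by this TV-continuity lemma and showed that the law of $\overline x_0\alpha$ under the relevant $\alpha$-marginal converges weakly to $\nu_h$.
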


Taking advantage of Corollary \ref{C:invariance}, the next lemma deduces that this stationary process satisfies an invariance principle.

\begin{lemma}\label{L:inv}
Assume $d\geq 6$ and let $\eta$ and $t_L$ be as in Lemma \ref{L:approx2}. For $s\in [0, t_L]$, define $Z_s:= \sum_{i=0}^{\lfloor s\rfloor-1} \eta_i(1)$. Then $(Z_{st_L}/\sqrt{t_L})_{s\in [0, 1]}$ converges in distribution to $(\sigma B_s)_{s\in [0, 1]}$, as $t \rightarrow \infty$, where $B$ is a standard Brownian motion in $\R^d$ and $\sigma\in [0, \infty)$ is a deterministic constant.
\end{lemma}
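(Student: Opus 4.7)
The plan is to apply the invariance principle of Corollary \ref{C:invariance} to the observable $\widehat f : \Sigma \to \R^d$ defined by $\widehat f(\overline{x}) := x_{-1}(1)$, the endpoint of the path increment encoded in the last coordinate of $\overline{x}$. Since $\overline{\eta}^{(j)}$ carries $\eta_{j-1}$ in position $-1$, we have $\widehat f(\overline{\eta}^{(j)}) = \eta_{j-1}(1)$ and hence $\sum_{j=1}^n \widehat f(\overline{\eta}^{(j)}) = \sum_{i=0}^{n-1} \eta_i(1) = Z_n$. Because Corollary \ref{C:invariance} is formulated for scalar observables, we apply it to each linear projection $\widehat f_v := v \cdot \widehat f$, $v \in \R^d$, and combine this with the Cram\'er-Wold device and tightness in $D([0,1],\R^d)$ (which follows coordinatewise from the scalar statements) to upgrade to a $d$-dimensional functional CLT.

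The four hypotheses of Corollary \ref{C:invariance} for $\widehat f_v$ will be verified as follows. \emph{(i) $L^2$-integrability:} By Lemma \ref{L:abscon}, the marginal law of $x_{-1}$ under $\nu_h$ is absolutely continuous with respect to $\mu$ with density at most $D$, and the endpoint $x_{-1}(1)$ has finite moments of all orders under $\mu$ since the number of jumps on a unit time interval is Poisson distributed. \emph{(ii) Zero mean:} Both $X$ and the traps are invariant under the coordinate reflections $z \mapsto -z$ on $\Z^d$, so $\varphi(\overline{x}) = \varphi(-\overline{x})$ under componentwise reflection on $\Sigma$; the uniqueness of the triple $(\lambda,h,\nu)$ in Theorem \ref{thm:nuex} then forces $h$ and $\nu$, and hence $\nu_h$, to be reflection symmetric, giving $\int_\Sigma \widehat f_v\,\d\nu_h = 0$. \emph{(iii) $\bar f_v := \Pi_h \widehat f_v \in C_b(\Sigma)$:} From the explicit formula
\[
\bar f_v(\overline{x}) \;=\; \frac{1}{\lambda h(\overline{x})}\int_S (v \cdot w(1))\,h(\overline{x}w)\,e^{\varphi(\overline{x}w)}\,\mu(\d w),
\]
boundedness follows from $\inf h > 0$, the boundedness of $h$ and $\varphi$, and $\int_S |w(1)|\,\mu(\d w) < \infty$, while continuity follows by dominated convergence from the uniform continuity of $h$ and $\varphi$. \emph{(iv) Polynomial variation decay:} If $\overline{u},\overline{u}' \in \Sigma$ agree in their last $n$ coordinates, then for each $w \in S$ the tuples $\overline{u}w,\overline{u}'w$ agree in their last $n+1$ coordinates, so $|\varphi(\overline{u}w) - \varphi(\overline{u}'w)| \leq \var_{n+1}(\varphi)$; the membership $h \in \Lambda$ from Claim \ref{cl:cbd} then controls the oscillation of $h$ by $\sum_{k \geq n+1} \var_k(\varphi)$. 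Combining these with $\int |w(1)|\,\mu(\d w) < \infty$ in the formula for $\bar f_v$ yields
\[
\var_n(\bar f_v) \;\leq\; C|v| \sum_{k=n+1}^\infty \var_k(\varphi) \;=\; O\!\left(n^{-(d/2-2)}\right) \;=\; O\!\left(n^{-(r-1)}\right),
\]
where $r := d/2 - 1$, using $\var_k(\varphi) = O(k^{-d/2+1})$ from Proposition \ref{prop:sumVar}\ref{item:varn}.

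Since $d \geq 6$ gives $r \geq 2 > 3/2$, Corollary \ref{C:invariance} yields the scalar invariance principle for the process $(v \cdot Z_{\lfloor N \cdot \rfloor}/\sqrt{N})$ for every $v \in \R^d$. Coordinatewise tightness gives tightness of the $\R^d$-valued process $(Z_{\lfloor N \cdot \rfloor}/\sqrt{N})$ in $D([0,1],\R^d)$, and any subsequential limit $Z^\ast$ must, by continuous mapping, satisfy $v \cdot Z^\ast \stackrel{d}{=} \sigma_v B^{(v)}$ for every $v$. Since the law of an $\R^d$-valued process in $D([0,1],\R^d)$ is determined by those of all of its scalar projections, the limit is uniquely identified as a mean-zero $d$-dimensional Gaussian process. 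By the full hypercubic symmetry of the trapping model (coordinate permutations and sign changes), its covariance matrix is a scalar multiple of the identity, whence the limit takes the form $(\sigma B_s)_{s \in [0,1]}$ with $B$ a standard $d$-dimensional Brownian motion and $\sigma \geq 0$ deterministic. Setting $N = t_L$ establishes the lemma. The main technical obstacle lies in step (iv): Pollicott's variation estimate (Lemma \ref{L:varkPi}\ref{item:varkBd}) assumes $\psi \in L^\infty$, whereas our $\widehat f_v$ is only in $L^p$ for every finite $p$; one therefore has to redo that calculation keeping $|w(1)|$ inside the integral against $\mu$ rather than pulling it out as a supremum, which is legitimate because $|w(1)|$ is $\mu$-integrable.
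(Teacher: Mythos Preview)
Your overall strategy coincides with the paper's: apply Corollary~\ref{C:invariance} to each scalar projection $\widehat f_v(\overline x)=v\cdot x_{-1}(1)$, verify the required regularity of $\bar f_v=\Pi_h\widehat f_v$, and then assemble the one–dimensional invariance principles into a $d$–dimensional one via tightness and Cram\'er--Wold. Your verification of the four hypotheses is correct and in one place (the variation bound for $\bar f_v$, where you honestly account for the oscillation of $h$ through $\sum_{k\ge n+1}\var_k(\varphi)$) arguably cleaner than what the paper writes.

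The gap is in the last paragraph. The assertion that ``the law of an $\R^d$-valued process in $D([0,1],\R^d)$ is determined by those of all of its scalar projections'' is false: knowing the law of $v\cdot Z^\ast$ for every $v\in\R^d$ gives you, for each finite set of times $t_1<\dots<t_k$, only the distributions of linear functionals of the form $\sum_i \alpha_i\, v\cdot Z^\ast_{t_i}$ with a \emph{common} direction $v$, which is a strictly lower–dimensional family than the full Cram\'er--Wold family $\sum_i a_i\cdot Z^\ast_{t_i}$, $a_i\in\R^d$. In particular, from the fact that every $v\cdot Z^\ast$ is a one–dimensional Brownian motion you can conclude that each one–time marginal $Z^\ast_t$ is Gaussian (genuine Cram\'er--Wold at a single time), and the hypercubic symmetry then forces $\E[Z^{\ast,i}_s Z^{\ast,j}_t]=\sigma^2\min(s,t)\delta_{ij}$ \emph{provided} $Z^\ast$ is a Gaussian process; but the joint Gaussianity of $(Z^\ast_{t_1},\dots,Z^\ast_{t_k})$ in $\R^{dk}$ is precisely what is not supplied by the one-dimensional projections, and the symmetry argument does not manufacture it.

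The paper closes this gap by an additional mixing argument rather than by Cram\'er--Wold: after establishing tightness and the Gaussianity of single increments $(Z_{st_L}-Z_{rt_L})/\sqrt{t_L}$, it shows directly that the limit has independent increments. Concretely, conditioning on $\overline\eta=(\eta_i)_{i\le -1}$, one writes $\mathcal L((\eta_i)_{L\le i<st_L}\mid\overline\eta)=\int \Pi_h^L(\overline\eta,\mathrm d\overline w)\,\Pi_h^{st_L-L}(\overline w,\cdot)\circ\pi^{-1}$, uses the ergodicity $\Pi_h^L(\overline\eta,\cdot)\Rightarrow\nu_h$ from Remark~\ref{R:Pih} together with Lemma~\ref{L:PnuTV} to replace $\Pi_h^L(\overline\eta,\cdot)$ by $\nu_h$ in total variation, and concludes that the future block becomes asymptotically independent of the past. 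Once independence of increments is in hand, the single-increment Gaussianity (which your Cram\'er--Wold argument does deliver) and the symmetry of the covariance identify the limit as $\sigma B$. You should insert this step in place of the unsupported ``projections determine the law'' claim.
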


Our last required auxillary  result for the proof of Theorem \ref{thm:fCLT} establishes that the diffusion coefficient $\sigma^2$ in Lemma \ref{L:inv} is strictly positive when $\gamma>0$ is sufficiently small.

\begin{proposition} \label{P:posgamma}
Under the same assumptions as in Theorem \ref{thm:fCLT}, if $\gamma >0$ is sufficiently small, then $\sigma^2 \in (0,\infty).$
\end{proposition}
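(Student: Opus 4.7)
The plan is to show that $\sigma^2 = \sigma^2(\gamma)$ is continuous in $\gamma$ at $\gamma=0$ and to compute $\sigma^2(0)>0$ explicitly; strict positivity of $\sigma^2(\gamma)$ for small $\gamma>0$ then follows. By the isotropy of the model under reflection in a coordinate hyperplane, the limiting covariance matrix has the form $\sigma^2 I_d$, so it suffices to work in one coordinate: fix $e\in\{1,\dots,d\}$ and take $\widehat f(\overline x):=x_{-1}^{(e)}(1)$. The symmetry of $\nu_h^\gamma$ under negation of increments gives $\langle \nu_h^\gamma,\widehat f\rangle=0$, and Corollary \ref{C:invariance} yields
$$
\sigma^2(\gamma)=\E_{\nu_h^\gamma}\!\big[\widehat f(\overline\eta^{(0)})^2\big]+2\sum_{n=1}^\infty \E_{\nu_h^\gamma}\!\big[\widehat f(\overline\eta^{(0)})\widehat f(\overline\eta^{(n)})\big].
$$
At $\gamma=0$ the potential $\varphi$ vanishes and the Ruelle transfer operator reduces to $L_0 f(\overline x)=\int_S f(\overline x z)\,\mu({\rm d}z)$. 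The uniqueness assertion of Theorem \ref{thm:nuex} then forces $\lambda(0)=1$, $h(0)\equiv 1$, and $\nu(0)=\nu_h(0)=\mu^{\otimes\Z_-}$, so that $\eta$ consists of i.i.d.\ simple symmetric random walk increments. Independence kills all terms with $n\geq 1$, and a direct computation gives $\sigma^2(0)=\Var_\mu(\eta_0^{(e)}(1))=\kappa/d>0$.

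For continuity of $\sigma^2$ at $\gamma=0$, we split the series at some large $N\in\N$. The tail bound \eqref{eq:sigmabd} gives
$$
\Big|\sum_{n\geq N}\E_{\nu_h^\gamma}[\widehat f(\overline\eta^{(0)})\widehat f(\overline\eta^{(n)})]\Big|\leq C(r)\,C_{\varphi_\gamma,\bar f}^2\,\Delta(N),
$$
where $\bar f=\Pi_h^\gamma\widehat f$. Inspecting the proof of Theorem \ref{T:Pollicott}, the constant $C_{\varphi_\gamma,\bar f}$ depends only on $\|\varphi_\gamma\|_\infty\leq \alpha\gamma$, $\sum_n\var_n(\varphi_\gamma)$, and the corresponding norms for $\bar f$. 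The bounds in Proposition \ref{prop:sumVar} together with the uniform control of $h^\gamma$ from Claim \ref{cl:cbd} show that these quantities can all be bounded uniformly in $\gamma\in[0,\gamma_0]$; hence the tail can be made arbitrarily small uniformly in $\gamma$ by choosing $N$ large.

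For each fixed $n\leq N$ we must show $\E_{\nu_h^\gamma}[\widehat f(\overline\eta^{(0)})\widehat f(\overline\eta^{(n)})]\to 0$ as $\gamma\to 0$. Since $\widehat f$ depends only on a single coordinate of $\overline x$, this reduces to weak convergence of the finite-dimensional marginals of $\nu_h^\gamma$ to those of $\mu^{\otimes\Z_-}$, which we deduce by a uniqueness-plus-compactness argument: the uniform bounds on $h^\gamma$ and its modulus of continuity from Claim \ref{cl:cbd}, together with the uniform density bound of Lemma \ref{L:abscon} (yielding tightness of the marginals of $\nu^\gamma$ as in Claim \ref{cl:tight}), show that the family $(\lambda^\gamma,h^\gamma,\nu^\gamma)_{\gamma\in[0,\gamma_0]}$ is precompact. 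Any subsequential limit as $\gamma\to 0$ must be eigen-data for $\varphi_0\equiv 0$, which by the uniqueness assertion of Theorem \ref{thm:nuex} is $(1,1,\mu^{\otimes\Z_-})$, so the full family converges. Combining the finite-term convergence with the uniform tail estimate gives $\sigma^2(\gamma)\to\sigma^2(0)=\kappa/d>0$, and hence $\sigma^2(\gamma)>0$ for all sufficiently small $\gamma>0$. The main obstacle is the precompactness step, because $\Sigma$ is non-compact; but it should follow along the same lines as Claim \ref{cl:tight}, with the uniform-in-$\gamma$ density bound of Lemma \ref{L:abscon} supplying the needed tightness.
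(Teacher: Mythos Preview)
Your proposal is essentially the same strategy as the paper's: write $\sigma^2(\gamma)$ via \eqref{eq:sigma2}, control the tail uniformly in $\gamma$ via \eqref{eq:sigmabd}, show each finite term tends to its $\gamma=0$ value, and compute $\sigma^2(0)=\kappa/d>0$. Two remarks.

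First, your compactness-plus-uniqueness argument for $(\lambda^\gamma,h^\gamma,\nu^\gamma)\to(1,1,\mu^{\otimes\Z_-})$ is more machinery than needed. The paper instead appeals directly to the density argument behind Lemma \ref{L:abscon}: the joint density of any finite collection of increments under $\nu_h^\gamma$ with respect to the product SRW law is given explicitly in terms of $e^{\varphi_\gamma}$ and $h^\gamma$, is bounded by a constant $D$ uniformly in $\gamma\in[0,1]$, and converges pointwise to $1$ as $\gamma\downarrow 0$ (since $\|\varphi_\gamma\|_\infty\le\alpha\gamma$ and the bounds in Claim \ref{cl:cbd} force $h^\gamma\to 1$ uniformly). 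Dominated convergence then gives the finite-term limits immediately.

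Second, there is a small gap where you write ``this reduces to weak convergence of the finite-dimensional marginals of $\nu_h^\gamma$'': since $\widehat f(\overline x)=x_{-1}^{(e)}(1)$ is unbounded on $\Sigma$, weak convergence alone does not yield convergence of $\E_{\nu_h^\gamma}[\widehat f(\overline\eta^{(0)})\widehat f(\overline\eta^{(n)})]$ or of $\E_{\nu_h^\gamma}[\widehat f^2]$. You need uniform integrability, which is in fact supplied by the very density bound from Lemma \ref{L:abscon} you already invoke for tightness (the marginals are dominated by $D\cdot\mu$, and the SRW increment has finite second moment). Once you make this explicit, your argument is complete---and at that point you will notice that the density bound already gives the convergence directly, making the compactness detour unnecessary.

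For the uniform tail bound, note that $\varphi_\gamma$ is not literally of the form $\widetilde\gamma\phi$ for a fixed $\phi$ (the function $v$ in \eqref{eq:vX2} also depends on $\gamma$), so the last sentence of Theorem \ref{T:Pollicott} does not apply verbatim; your route of inspecting the proof and bounding $\|\varphi_\gamma\|_\infty$, $\sum_n\var_n(\varphi_\gamma)$, and $\var_n(\bar f)$ uniformly in $\gamma\in[0,1]$ via Proposition \ref{prop:sumVar} and Claim \ref{cl:cbd} is the correct way to proceed.
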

This result implies that the random walk conditioned on annealed survival among the moving traps is indeed diffusive. We conjecture that the same is true for any $\gamma>0$; however, it is currently not clear how to verify this.

\begin{proof}[Proof of Theorem \ref{thm:fCLT}]
By Lemma \ref{L:inv}, $(Z_{st_L}/\sqrt{t_L})_{s\in [0, 1]}$ converges in distribution to $(\sigma B_s)_{s\in [0, 1]}$ as $t \rightarrow \infty$, where $B$ is a standard Brownian motion in $\R^d$ and $\sigma\in [0, \infty)$ is a deterministic constant. By Lemma \ref{L:approx2},
   \begin{equation*}
\d_{\rm TV}( {\mathcal L}(\eta^{(t)}), {\mathcal L}(\overline{Y}_+^{(t)})) \to 0 \qquad \mbox{as} \quad t\to\infty.
\end{equation*}
This implies  $(Y_{st_L}/\sqrt{t_L})_{s\in [0, 1]}$ converges in distribution to the same limit as $(Z_{st_L}/\sqrt{t_L})_{s\in [0, 1]}$ as $t \rightarrow \infty$.  By Corollary \ref{C:approx}, we have that  $(X_{st}/\sqrt{t})_{s\in [0, 1]}$ converges in distribution to the same limit as $(Z_{st_L}/\sqrt{t_L})_{s\in [0, 1]}$ as $t \rightarrow \infty$.

In conclusion,  under $P_{t}^\gamma(X\in \cdot)$, the diffusively rescaled path
$(X_{st}/\sqrt{t})_{s\in [0, 1]}$ converges in distribution to $(\sigma B_s)_{s\in [0, 1]}$ as $t \rightarrow \infty$, where by Proposition \ref{P:posgamma}, $\sigma \in (0,\infty)$ for $\gamma >0$ sufficiently small.
\end{proof}

\subsection{Proof of Lemmas \ref{L:approx2} and \ref{L:inv} and Proposition \ref{P:posgamma}}\label{sec:twolem}

To prove Lemma \ref{L:approx2}, we first prove two preliminary results on the continuity of the path measure $P^\gamma_{X, t}((X_s)_{s\in [0, t]}\in \cdot)$ (in total variation distance) with respect to  the past $(X_s)_{s<0}$, where the path measure $P^\gamma_{X, t}$ was defined in \eqref{eq:GibbsRewrite} and \eqref{eq:vX2}.

\begin{lemma}\label{L:PXTV}
Let $(X^1_s)_{s<0}$ and $(X^2_s)_{s<0}$ be two c\`adl\`ag paths in $\Z^d\cup\{*\}$ such that $X^1_s= X^2_s$ for all $s\in (-T, 0)$. Then uniformly in $X^1$, $X^2$, $T\geq 1$, and $t>0$, we have
\begin{equation}\label{eq:PXTV}
    \d_{\rm TV}\big( P^\gamma_{X^1, t}((X_s)_{s\in [0,t]} \in \cdot), P^\gamma_{X^2, t}((X_s)_{s\in [0,t]} \in \cdot)\big) \leq \frac{C}{T^{d/2-2}}.
\end{equation}
\end{lemma}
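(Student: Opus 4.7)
The plan is to write both Gibbs measures as densities with respect to the common reference measure $\P^X_0$ of a simple symmetric random walk started at the origin. Denoting by $X^{(i)}$ the full c\`adl\`ag path obtained by concatenating the past $X^i$ with the forward random walk $X$, these densities are $e^{-\Phi_i(X)}/Z_i$ with $\Phi_i(X) := \alpha\gamma \int_0^t v(s, X^{(i)})\, {\rm d}s$ and $Z_i := \E^X_0[e^{-\Phi_i(X)}]$. The task then reduces to establishing the uniform pointwise bound $\sup_X |\Phi_1(X) - \Phi_2(X)| \leq C/T^{d/2-2}$, from which comparison of numerators and partition functions will yield the claimed total variation estimate.

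The key ingredient is a slice-wise bound on $v(s, X^{(1)}) - v(s, X^{(2)})$ for each $s \in [0, t]$. Since the two pasts agree on $(-T, 0)$, in the defining expectation \eqref{eq:vX2} for $v(s, X^{(i)})$ the integrand $\delta_0(Y_r - X^{(i)}_{s-r})$ can differ between $i=1$ and $i=2$ only when $s - r \leq -T$, i.e., when $r \geq s+T$. Proceeding exactly as in the proof of Proposition~\ref{prop:sumVar}~\ref{item:finInfDiff}, and applying the local central limit theorem bound on the trap transition kernel, one obtains
\begin{equation*}
|v(s, X^{(1)}) - v(s, X^{(2)})| \leq \gamma \int_{s+T}^\infty \frac{C}{r^{d/2}}\, {\rm d}r \leq \frac{C\gamma}{(s+T)^{d/2-1}}
\end{equation*}
uniformly in $s$, $X$, $X^1$, $X^2$. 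The hypothesis $d \geq 5$ (which makes $d/2 - 1 > 1$) is the critical point that allows integration of this estimate over $s \in [0,t]$ to remain uniformly bounded in $t$, yielding $|\Phi_1(X) - \Phi_2(X)| \leq C'/T^{d/2-2}$.

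Finally, with $\varepsilon := C'/T^{d/2-2}$, the elementary inequality $|e^{x} - 1| \leq e^{|x|} - 1$ gives $|e^{-\Phi_1(X)} - e^{-\Phi_2(X)}| \leq e^{-\Phi_2(X)}(e^\varepsilon - 1)$ uniformly in $X$, and integration against $\P^X_0$ yields $|Z_1 - Z_2| \leq (e^\varepsilon - 1) Z_2$. Writing
\begin{equation*}
\frac{e^{-\Phi_1(X)}}{Z_1} - \frac{e^{-\Phi_2(X)}}{Z_2} = \frac{e^{-\Phi_1(X)} - e^{-\Phi_2(X)}}{Z_1} + \frac{e^{-\Phi_2(X)}(Z_2 - Z_1)}{Z_1 Z_2}
\end{equation*}
and integrating the absolute value against $\P^X_0$ gives a TV bound of order $(e^\varepsilon - 1) Z_2/Z_1 \lesssim \varepsilon \lesssim T^{-(d/2-2)}$, since $Z_2/Z_1$ is uniformly bounded for large $T$ and the bound is trivial for small $T$ (where the right-hand side of \eqref{eq:PXTV} may be taken $\geq 1$ by adjusting the constant). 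The only substantive step is the slice-wise estimate above, which is essentially a repetition of arguments already present in the paper; I do not anticipate any genuine obstacle.
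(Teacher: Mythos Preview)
Your proposal is correct and follows essentially the same route as the paper: write the two Gibbs measures as densities with respect to $\P^X_0$, bound $|v(s,X^{(1)})-v(s,X^{(2)})|$ via the local CLT estimate exactly as in \eqref{eq:tvdiff} (the difference only appearing for $r\ge s+T$), integrate over $s$ to get $|\Phi_1-\Phi_2|\le C/T^{d/2-2}$, and deduce the total variation bound. The paper compresses your final step by computing the Radon--Nikodym derivative ${\rm d}P^\gamma_{X^2,t}/{\rm d}P^\gamma_{X^1,t}$ directly and observing it lies in $[1-C/T^{d/2-2},\,1+C/T^{d/2-2}]$, but this is equivalent to your two-term decomposition.
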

\begin{proof}
Recall from \eqref{eq:GibbsRewrite} and \eqref{eq:vX2} that for $i=1, 2$,
$$
P^\gamma_{X^i, t}({\rm d}X) = \frac{1}{Z^\gamma_{t, X^i}} e^{-\alpha\gamma \int_0^t v(s, \widetilde X^i) \, {\rm d}s} \, \P_0({\rm d}X),
$$
where $\P_0({\rm d}X)$ denotes the law of a simple symmetric random walk $(X_s)_{s\in [0, t]}$ with $X_0=0$, and $\widetilde X^i$ is the concatenation of $X^i$ and $X$ at time $0$. As a consequence, we obtain that
\begin{align*}
    \frac{{\rm d} P^\gamma_{X^2, t}}{{\rm d} P^\gamma_{X^1, t}}(X) = \frac{Z^\gamma_{t, X^1}}{Z^\gamma_{t, X^2}} e^{-\alpha\gamma \int_0^t (v(s, \widetilde X^2) - v(s, \widetilde X^1)) \,  {\rm d}s}.
\end{align*}
Applying \eqref{eq:tvdiff}, the exponent in the previous display is uniformly bounded from above by
$$
\alpha \gamma \int_0^t \frac{C}{(T+s)^{d/2-1}} \, {\rm d}s \leq \frac{C}{T^{d/2-2}}.
$$
Therefore, $Z^\gamma_{t, X^1}/Z^\gamma_{t, X^2}$ and its reciprocal are also uniformly bounded by $1+C/T^{d/2-2}$. The total variation bound in \eqref{eq:PXTV} then follows easily.
\end{proof}

The next lemma is an analogue of Lemma \ref{L:PXTV} formulated in terms of the path increments taking values in $(\Sigma, \d)$, where the distance between two pasts $\overline{x}, \overline{y}\in \Sigma$ is measured through the metric $\d$ defined in and after \eqref{eq:pathd}. Recall the measure $\overline{P}^\gamma_{\overline{w}, t}({\rm d}\overline{z})$ defined after \eqref{tildePgamma} for some $\overline{w}\in \Sigma$, and recall the transition kernel $\Pi_h$ of the $\Sigma$-valued Markov chain introduced in Remark \ref{R:Pih}. For $t\in \N$, let $\pi_{-t, 0}: \Sigma \to S^t$ be the coordinate projection map defined by $\pi_t \overline{x} = (x_i)_{-t\leq i\leq -1}$.

\begin{lemma}\label{L:PnuTV}
Let $d \ge 5$ and
let $(\overline{w}^{(n)})_{n\in\N}$ be a sequence of elements in $(\Sigma, \d)$ such that $\overline{w}^{(n)}\to \overline{w}$. Then
\begin{equation}\label{eq:PnuTV}
    \sup_{t\in\N} \, \d_{\rm TV}\big( \overline{P}^\gamma_{\overline{w}^{(n)}, t}\circ \pi_{-t, 0}^{-1}, \, \overline{P}^\gamma_{\overline{w}, t}\circ \pi_{-t, 0}^{-1}\big) \to 0 \quad \mbox{as} \quad n\to\infty,
\end{equation}
and
\begin{equation}\label{eq:PnuTV'}
    \sup_{t\in\N} \, \d_{\rm TV}\big(\Pi_h^t(\overline{w}^{(n)}, {\rm d}\overline{y})\circ \pi_{-t, 0}^{-1}, \, \Pi_h^t(\overline{w}, {\rm d}\overline{y})\circ \pi_{-t, 0}^{-1}\big) \to 0 \quad \mbox{as} \quad n\to\infty.
\end{equation}
The same conclusions hold if we replace $\overline{P}^\gamma_{\overline{w}^{(n)}, t}$, $\overline{P}^\gamma_{\overline{w}, t}$,  $\Pi_h^t(\overline{w}^{(n)}, \cdot)$ and $\Pi_h^t(\overline{w}, \cdot)$ respectively by the mixtures $\int_\Sigma \overline{P}^\gamma_{\overline{u}, t} \, \varsigma^{(n)}({\rm d}\overline{u})$, $\int_\Sigma \overline{P}^\gamma_{\overline{u}, t} \, \varsigma({\rm d}\overline{u})$, $\int_\Sigma \Pi_h^t(\overline{u}, \cdot) \, \varsigma^{(n)}({\rm d}\overline{u})$, and $\int_\Sigma \Pi_h^t(\overline{u}, \cdot) \, \varsigma({\rm d}\overline{u})$, where $(\varsigma^{(n)})_{n\in\N}$ and $\varsigma$ are probability measures on $(\Sigma, \d)$ with $\varsigma^{(n)}\Rightarrow \varsigma$.
\end{lemma}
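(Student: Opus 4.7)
The plan is to first prove \eqref{eq:PnuTV} by writing the density of $\overline{P}^\gamma_{\overline{w}, t}\circ \pi_{-t,0}^{-1}$ on $S^t$ with respect to $\mu^{\otimes t}$, and then derive \eqref{eq:PnuTV'} and the mixture statements as variations of the same argument. Starting from \eqref{tildePgamma} with $\nu = \delta_{\overline{w}}$ and the definition of $\Pi$ in \eqref{eq:origTrans}, this density equals $e^{\sum_{i=1}^t \varphi(\overline{w}z_1\cdots z_i)}/Z_t(\overline{w})$, where $Z_t(\overline{w}) := \int_{S^t} e^{\sum_{i=1}^t \varphi(\overline{w}z_1\cdots z_i)} \prod_{i=1}^t \mu(\d z_i)$. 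Consequently, the Radon--Nikodym derivative of $\overline{P}^\gamma_{\overline{w}^{(n)}, t}\circ \pi_{-t,0}^{-1}$ with respect to $\overline{P}^\gamma_{\overline{w}, t}\circ \pi_{-t,0}^{-1}$ factorizes as $\frac{Z_t(\overline{w})}{Z_t(\overline{w}^{(n)})} \exp\bigl\{\sum_{i=1}^t [\varphi(\overline{w}^{(n)}z_1\cdots z_i) - \varphi(\overline{w}z_1\cdots z_i)]\bigr\}$.

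The heart of the proof is a bound on the exponent sum that is uniform in $t$ and in $z_1,\ldots,z_t \in S$. For each individual term, two bounds are available: the crude $|\varphi(\overline{w}^{(n)}z_1\cdots z_i) - \varphi(\overline{w}z_1\cdots z_i)| \leq \var_i(\varphi)$ from the definition \eqref{eq:varDef}, and the fine $|\varphi(\overline{w}^{(n)}z_1\cdots z_i) - \varphi(\overline{w}z_1\cdots z_i)| \leq \varpi_\varphi(2^{-i}\d(\overline{w}^{(n)}, \overline{w}))$, where $\varpi_\varphi$ denotes a modulus of continuity provided by Proposition~\ref{prop:sumVar}~\ref{item:UC} and the factor $2^{-i}$ comes from the observation that appending a common suffix of length $i$ contracts the product metric \eqref{eq:prodMet} by exactly $2^{-i}$. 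Splitting at some $i_0 \in \N$ yields
\[
\sum_{i=1}^t |\varphi(\overline{w}^{(n)}z_1\cdots z_i) - \varphi(\overline{w}z_1\cdots z_i)| \leq i_0 \varpi_\varphi(\d(\overline{w}^{(n)}, \overline{w})) + \sum_{i>i_0} \var_i(\varphi).
\]
Since $d \geq 5$, Proposition~\ref{prop:sumVar}~\ref{item:varn} provides summability of $(\var_i(\varphi))_{i}$, so first choosing $i_0$ large renders the tail arbitrarily small, and then letting $n \to \infty$ kills the first term. This yields a bound $\varepsilon_n \to 0$ independent of $t$ and $z_1,\ldots,z_t$. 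Applying $|e^x - 1| \leq |x| e^{|x|}$ to the exponential factor, and using the same control inside the integrals defining $Z_t(\overline{w})$ and $Z_t(\overline{w}^{(n)})$, shows that the Radon--Nikodym derivative is uniformly $1 + O(\varepsilon_n)$, which gives \eqref{eq:PnuTV}.

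For \eqref{eq:PnuTV'}, the law $\Pi_h^t(\overline{w},\cdot)\circ \pi_{-t,0}^{-1}$ has density $\frac{h(\overline{w}z_1\cdots z_t)}{\lambda^t h(\overline{w})} e^{\sum_{i=1}^t \varphi(\overline{w}z_1\cdots z_i)}$ with respect to $\mu^{\otimes t}$, so the Radon--Nikodym derivative between the two $\Pi_h$-kernels differs from the one already controlled by the additional factor $h(\overline{w}) h(\overline{w}^{(n)}z_1\cdots z_t) / [h(\overline{w}^{(n)}) h(\overline{w}z_1\cdots z_t)]$. Since $h$ is bounded away from $0$ and $\infty$ (Theorem~\ref{thm:nuex}~\ref{item:harmonic} together with Claim~\ref{cl:cbd}~\ref{item1}) and uniformly continuous (Claim~\ref{cl:cbd}~\ref{item2}), this ratio is uniformly $1 + o(1)$ in $z_1,\ldots,z_t$ as $n\to\infty$, and the previous argument applies verbatim. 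For the two mixture claims, Skorohod's representation theorem produces a coupling of $U_n \sim \varsigma^{(n)}$ and $U \sim \varsigma$ on a common probability space with $U_n \to U$ almost surely, and then for any measurable $A \subset S^t$,
\[
\Bigl| \int \overline{P}^\gamma_{\overline{u}, t}(\pi_{-t,0}^{-1}(A))\, \varsigma^{(n)}(\d\overline{u}) - \int \overline{P}^\gamma_{\overline{u}, t}(\pi_{-t,0}^{-1}(A))\, \varsigma(\d\overline{u}) \Bigr| \leq \E\bigl[ \d_{\rm TV}(\overline{P}^\gamma_{U_n, t}\circ \pi_{-t,0}^{-1},\, \overline{P}^\gamma_{U, t}\circ \pi_{-t,0}^{-1}) \bigr];
\]
combining this with the uniform-in-$t$ bound just proved and dominated convergence (the integrand tends to $0$ almost surely and is bounded by $2$) delivers the mixture statement, and the analogue for $\Pi_h$ is identical.

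The main obstacle is uniformity in $t$ of the TV bound, since the exponent sum above is over $t$ increasing terms. This is precisely what forces the splitting argument that trades the crude $\var_i(\varphi)$ bound (which handles the tail without any dependence on $n$) against the fine uniform-continuity bound (which handles the first $i_0$ terms using the closeness of $\overline{w}^{(n)}$ and $\overline{w}$), and relies crucially on the summability of $\var_i(\varphi)$ that is available precisely for $d \geq 5$.
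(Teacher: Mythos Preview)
Your proof is correct and follows essentially the same approach as the paper: write the Radon--Nikodym derivative explicitly, control the exponent $\sum_{i=1}^t[\varphi(\overline{w}^{(n)}z_1\cdots z_i)-\varphi(\overline{w}z_1\cdots z_i)]$ uniformly in $t$ and $z_1,\ldots,z_t$ via summable variation plus uniform continuity of $\varphi$, handle the extra $h$-ratios in \eqref{eq:PnuTV'} via the uniform continuity and boundedness of $h$, and deduce the mixture version from Skorohod's representation theorem and dominated convergence. The only cosmetic difference is that the paper phrases the exponent bound as dominated convergence in $i$ (each summand bounded by $\var_i(\varphi)$, summable, and each summand $\to 0$ by uniform continuity), whereas you spell out the equivalent $i_0$-splitting explicitly.
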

\begin{proof}
For $t\in\N$, by the definition in \eqref{tildePgamma}, we have
$$
(\overline{P}^\gamma_{\overline{w}, t}\circ \pi_{-t, 0}^{-1}) ({\rm d}z_1 \ldots {\rm d}z_t) = \frac{1}{Z^\gamma_{\overline{w}, t}} e^{\sum_{i=1}^t \varphi(\overline{w}z_1\ldots z_i)} \prod_{i=1}^t \mu({\rm d}z_i),
$$
where $Z^\gamma_{\overline{w}, t}$ is the normalizing constant. Therefore,
\begin{align*}
    \frac{{\rm d} \overline{P}^\gamma_{\overline{w}^{(n)}, t}\circ \pi_{-t, 0}^{-1}}{{\rm d} \overline{P}^\gamma_{\overline{w}, t}\circ \pi_{-t, 0}^{-1}}(z_1, \ldots, z_t) = \frac{Z^\gamma_{\overline{w}, t}}{Z^\gamma_{\overline{w}^{(n)}, t}} e^{\sum_{i=1}^t (\varphi(\overline{w}^{(n)}z_1\ldots z_i) -\varphi(\overline{w}z_1\ldots z_i) )}.
\end{align*}
Uniformly in $t\in\N$ and $z_1, z_2\ldots \in S$, the exponent above can be bounded by
\begin{align} \label{eq:Pwnconv}
\begin{split}
    &\Big|\sum_{i=1}^t (\varphi(\overline{w}^{(n)}z_1\ldots z_i) -\varphi(\overline{w}z_1\ldots z_i) )\Big| \\
    &\quad
    \leq \sum_{i=1}^\infty \sup_{z_1, z_2\ldots \in S} |\varphi(\overline{w}^{(n)}z_1\ldots z_i) -\varphi(\overline{w}z_1\ldots z_i)|.
\end{split}
\end{align}
Now the $i$-th summand on the right-hand side of the previous display is bounded by $\var_i(\varphi)$. Due to $d\ge 5$ by assumption, and using Proposition \ref{prop:sumVar}, we have $\sum_{i\in\N} \var_i(\varphi)<\infty$, and each summand in the right-hand side of the previous display converges to $0$ as $n\to\infty$ by the uniform continuity of $\varphi$ proved in Proposition \ref{prop:sumVar}. Therefore, the right-hand side of \eqref{eq:Pwnconv} converges to $0$ as $n\to\infty$. The uniform convergence in total variation distance in \eqref{eq:PnuTV} then follows readily.

The proof of \eqref{eq:PnuTV'} is similar. Since $\Pi_h\big(\overline{x}, {\rm d}\overline{y}\big) = 1_{\{\overline{y} = \overline{x} z\}} e^{\varphi(\overline{x}z)}\frac{h(\overline{x}z)}{\lambda h(\overline{x})} \mu({\rm d}z)$, we can write
\begin{align*}
\big(\Pi_h^t(\overline{w}, \cdot )\circ \pi_{-t, 0}^{-1}\big)({\rm d}z_1\ldots {\rm d}z_t) = e^{\sum_{i=1}^t \varphi(\overline{w}z_1\ldots z_i)} \frac{h(\overline{w}z_1\ldots z_t)}{\lambda^t h(\overline{w})} \prod_{i=1}^t \mu({\rm d}z_i),
\end{align*}
and
\begin{align*}
\frac{{\rm d} \Pi_h^t(\overline{w}^{(n)}, \cdot)\circ \pi_{-t, 0}^{-1}}{{\rm d} \Pi_h^t(\overline{w}, \cdot)\circ \pi_{-t, 0}^{-1}} (z_1, \ldots, z_t) = \frac{h(\overline{w})}{h(\overline{w}^{(n)})}\cdot \frac{h(\overline{w}^{(n)}z_1\ldots z_t)}{h(\overline{w}z_1\ldots z_t)} e^{\sum_{i=1}^t (\varphi(\overline{w}^{(n)}z_1\ldots z_i) -\varphi(\overline{w}z_1\ldots z_i) )},
\end{align*}
where the exponent converges uniformly to $0$ as $n\to\infty$ by \eqref{eq:Pwnconv}, and the quotients of $h(\cdot)$ converge uniformly to $1$ by the uniform continuity of $h$ established in Theorem \ref{thm:nuex}. The uniform convergence in \eqref{eq:PnuTV'} is then immediate.

When deterministic $\overline{w}^{(n)}\to \overline{w}$ are replaced by probability mixtures with laws $\varsigma^{(n)}$ and $\varsigma$, by Skorohod's representation theorem, we can construct coupled random variables $\overline{w}^{(n)}$ and $\overline{w}$ with laws $\varsigma^{(n)}$ and $\varsigma$, respectively, such that $\overline{w}^{(n)}\to \overline{w}$ almost surely. The desired result then follows by applying
\eqref{eq:PnuTV} and \eqref{eq:PnuTV'} to the almost sure realization of $\overline{w}^{(n)}\to \overline{w}$ and then taking expectation.
\end{proof}

We are now ready to prove Lemma \ref{L:approx2}.

\begin{proof}[Proof of Lemma \ref{L:approx2}]
The basic strategy is to reformulate the path measure $P^\gamma_t$, or rather the law of the path increments, in terms of the $\Sigma$-valued Markov chain with transition kernel $\Pi_h$ and use its ergodicity stated in Remark \ref{R:Pih}. But we first need to approximate $P^\gamma_t$, which has an empty past, by $P^\gamma_{X, t}$ with an infinite past so that the increments of the past $(X_s)_{s<0}$ belong to $\Sigma$. We proceed as follows.

For fixed $T>0$ and for $t>T$, sample $(X_s)_{s\in [0, T]}$ according to $P^\gamma_t((X_s)_{s\in [0, T]}\in \cdot)$, and let $\mu_{t, T}$ denote the law of the space-time shifted path $\widetilde X^{(T)}:=(\widetilde X^{(T)}_s)_{s\in [-T, 0]} := (X_{T+s}-X_T)_{s\in [-T, 0]}$. We note that by the Gibbs property of $P^\gamma_t$, conditioned on $(X_s)_{s\in [0, T]}$ and its space time shift $\widetilde X^{(T)}$, the law of $(X_{T+s}-X_T)_{s\in [0, t-T]}$ is given exactly by $P^\gamma_{\widetilde X^{(T)}, t-T}$. We can extend $\widetilde X^{(T)}$ backwards in time by setting $\widetilde X_s= \widetilde X^{(T)}_s$ for $s\in [-T, 0]$ and setting $\widetilde X_s=\widetilde X^{(T)}_{-T}$ for $s<-T$. We will denote the law of $\widetilde X$ also by $\mu_{t, T}$. Then Lemma \ref{L:PXTV} implies that uniformly in $T\geq 1$, $t>T$, and $\widetilde X^{(T)}$, we have
\begin{equation}\label{eq:PXTV2}
    \d_{\rm TV}\big( P^\gamma_{\widetilde X^{(T)}, t-T}((X_s)_{s\in [0,t-T]} \in \cdot), P^\gamma_{\widetilde X, t-T}((X_s)_{s\in [0,t-T]} \in \cdot)\big) \leq \frac{C}{T^{d/2-2}}.
\end{equation}
This shows that attaching an infinite past creates an error that can be made arbitrarily small in total variation distance by choosing $T$ sufficiently large. It then suffices to verify Lemma \ref{L:approx2} for each fixed $T>0$, where the past $\widetilde X$ in $P^\gamma_{\widetilde X, t-T}$ has law $\mu_{t, T}$. To apply the ergodicity of the $\Sigma$-valued Markov chain with transition kernel $\Pi_h$, we need to replace $\mu_{t, T}$ by something that does not depend on $t$ as we take the limit $t\to\infty$. This is achieved by
observing that, using the same argument as that for Lemma \ref{L:abscon}, we note that $\mu_{t, T}$ is absolutely continuous with respect to the law of the simple symmetric random walk $(X_s)_{s\in [-T, 0]}$ with $X_0=0$, such that the density is uniformly bounded and the bound depends only on $T$ and not on $t$. To verify Lemma \ref{L:approx2}, it then suffices to show that the convergence therein holds with $P^\gamma_t$ replaced by $P^\gamma_{X, t-T}$ for a typical history $X:[-T, 0]\to \Z^d$ sampled according to the law of the simple symmetric random walk (with $X_0=0$ and $X_{s}=X_{-T}$ for $s<-T$). Averaging over the law $\mu_{t, T}$ for the history $X$, which has a uniformly bounded density w.r.t.\ the law of the simple symmetric random walk, then implies Lemma \ref{L:approx2}. We will fix an arbitrary history $(X_s)_{s\leq 0}$ from now on. As in Lemma \ref{L:approx2}, we reformulate the problem in terms of the law of path increments as follows.

First we assume $t\in \N$. Otherwise we adjust $T$ slightly such that $t-T\in \N$. For $\overline{x}\in \Sigma$, recall that similar to \eqref{tildePgamma2} and \eqref{statred1},
\begin{equation}\label{eq:stared2}
\overline{P}^\gamma_{\overline{x}, t}({\rm d}\overline{y}) := \frac{(\delta_{\overline{x}} \Pi^t)({\rm d}\overline{y})}{\langle \delta_{\overline{x}}, \Pi^t1\rangle} = \frac{h(\overline{x}) \Pi_h^t(\overline{x}, {\rm d} \overline{y})\frac{1}{h(\overline{y})}}{
h(\overline{x}) (\Pi_h^t \frac{1}{h})(\overline{x})} = \frac{\Pi_h^t(\overline{x}, {\rm d} \overline{y})\frac{1}{h(\overline{y})}}{(\Pi_h^t \frac{1}{h})(\overline{x})},
\end{equation}
where $\overline{P}^\gamma_{\overline{x}, t}({\rm d}\overline{y})$ concentrates on $\overline{y}\in \Sigma$ with $(y_{-t+i})_{i\leq -1} =\overline{x}$. For $a, b\in \Z$ with $a<b\leq 0$, if $\pi_{a, b}: \Sigma \to S^{b-a}$ denotes the projection $\pi_{a, b}\overline{x}= (x_i)_{a\leq i\leq b-1}$, then $\overline{P}^\gamma_{\overline{x}, t}\circ \pi_{-t+L, -L}^{-1}$ is exactly the law of the path increments in the time interval $[L, t-L]$ under $P^\gamma_{X, t}((X_s)_{s\in [0,t]}\in \cdot)$, where the past $(X_s)_{s<0}$ is the c\`adl\`ag path constructed from $\overline{x}$ with $X_0=0$. To conclude the proof of Lemma \ref{L:approx2}, it only remains to show that
\begin{equation}\label{eq:TV2}
\d_{\rm TV}( \overline{P}^\gamma_{\overline{x}, t}\circ \pi_{-t+L, -L}^{-1}, \ \nu_h \circ \pi_{-t+L, -L}^{-1}) \to 0 \qquad \mbox{as} \quad t\to\infty.
\end{equation}

Using $\Pi_h^t = \Pi_h^L \Pi_h^{t-L}$ to perform a marginal-conditional distribution decomposition in \eqref{eq:stared2}, we can rewrite
\begin{align}
\overline{P}^\gamma_{\overline{x}, t}({\rm d}\overline{y})  & = \frac{\int_{\overline{w}\in \Sigma}\Pi_h^L(\overline{x}, {\rm d}\overline{w}) \Pi_h^{t-L}(\overline{w}, {\rm d} \overline{y})\frac{1}{h(\overline{y})}}{(\Pi_h^t \frac{1}{h})(\overline{x})} \notag \\
& =  \int_{\overline{w}\in \Sigma} \frac{\Pi_h^L(\overline{x}, {\rm d}\overline{w})(\Pi_h^{t-L} \frac{1}{h})(\overline{w})}{(\Pi_h^t \frac{1}{h})(\overline{x})} \cdot \frac{\Pi_h^{t-L}(\overline{w}, {\rm d} \overline{y})\frac{1}{h(\overline{y})}}{(\Pi_h^{t-L} \frac{1}{h})(\overline{w})} \notag \\
& = \int_{\overline{w}\in \Sigma} \nu_{t, L, \overline{x}}({\rm d}\overline{w}) \overline{P}^\gamma_{\overline{w}, t-L}({\rm d}\overline{y}), \label{stared3}
\end{align}
where we note that
\begin{align*}
\nu_{t, L, \overline{x}}({\rm d}\overline{w}) := \int_{\overline{w}\in \Sigma} \frac{\Pi_h^L(\overline{x}, {\rm d}\overline{w})(\Pi_h^{t-L} \frac{1}{h})(\overline{w})}{(\Pi_h^t \frac{1}{h})(\overline{x})}
\ \ \Rightarrow\ \ \nu_h \quad \mbox{as} \ \ t\to\infty.
\end{align*}
The weak convergence holds because $\Pi_h^L(\overline{x}, {\rm d}\overline{w}) \Rightarrow \nu_h({\rm d}\overline{w})$ as explained in Remark \ref{R:Pih}, which allows us to couple $\Sigma$-valued random variables $\overline{w}^{(L)}$ and $\overline{w}$ with law
$\Pi_h^L(\overline{x}, {\rm d}\overline{w})$ and $\nu_h({\rm d}\overline{w})$ respectively, such that $\overline{w}^{(L)}\to \overline{w}$ almost surely in $(\Sigma, \d)$. The claimed weak convergence then follows from the fact that $(\Pi_h^t \frac{1}{h})(\overline{x})\to 1$ and $(\Pi_h^{t-L} \frac{1}{h})(\overline{w})\to 1$ by \eqref{erg}, while the family of functions $(\Pi_h^m \frac{1}{h})_{m\in\N_0}$ are uniformly equicontinuous by Claim \ref{cl:uconlg}.

We can now apply Lemma \ref{L:PnuTV} to obtain
that for $t\to\infty,$
\begin{align}\label{stared4}
\d_{\rm TV}\Big(\int_{\overline{w}\in \Sigma} \nu_{t, L, \overline{x}}({\rm d}\overline{w}) \overline{P}^\gamma_{\overline{w}, t-L} \circ \pi_{-t_L, 0}^{-1}, \ \int_{\overline{w}\in \Sigma} \nu_h({\rm d}\overline{w}) \overline{P}^\gamma_{\overline{w}, t-L} \circ \pi_{-t_L, 0}^{-1}  \Big) \to 0,
\end{align}
where the first measure is exactly $\overline{P}^\gamma_{\overline{x}, t}\circ \pi_{-t+L, -L}^{-1}$ in \eqref{eq:TV2}, while the second measure can be written as
\begin{align*}
&\Big(\int_{\overline{w}\in \Sigma} \nu_h({\rm d}\overline{w}) \overline{P}^\gamma_{\overline{w}, t-L} \circ \pi_{-t_L, 0}^{-1}\Big)({\rm d}z_1\ldots {\rm d}z_{t_L})\\
&\quad =    \int_{\overline{w}\in \Sigma} \!\!\!\!\!\!\! \nu_h({\rm d}\overline{w})
    \frac{\Pi_h^{t_L}(\overline{w}, \overline{w}{\rm d}z_1\ldots {\rm d}z_{t_L})
    (\Pi_h^L\frac{1}{h})(\overline{w}z_1\ldots z_{t_L})}{(\Pi_h^{t-L} \frac{1}{h})(\overline{w})},
\end{align*}
where the denominator $(\Pi_h^{t-L} \frac{1}{h})(\overline{w})\to 1$
and hence can be removed with an asymptotically negligible difference in total variation distance, and the factor $(\Pi_h^L\frac{1}{h})(\overline{w}z_1\ldots z_{t_L})$ can be removed for exactly the same reason, while what is left is
\begin{align*}
 &\int_{\overline{w}\in \Sigma} \nu_h({\rm d}\overline{w}) \Pi_h^{t_L}(\overline{w}, \overline{w}{\rm d}z_1\ldots {\rm d}z_{t_L}) \\
 &\quad = (\nu_h \circ \pi_{-t_L, 0}^{-1})({\rm d}z_1\ldots {\rm d}z_{t_L}) = (\nu_h \circ \pi_{-t+L, -L}^{-1})({\rm d}z_1\ldots {\rm d}z_{t_L}).
\end{align*}
Substituting this measure for the second measure in \eqref{stared4} then gives \eqref{eq:TV2}, which concludes the proof of Lemma \ref{L:approx2}.
\end{proof}
\medskip

\begin{proof}[Proof of Lemma \ref{L:inv}]
For $b\in \R^d$, let us define $f_b:S\to \R$ with $f_b(z)=\langle b, z(1)\rangle$ for $z=(z(s))_{s\in [0,1]}\in S$, and define $\widehat f_b: \Sigma \to \R$ with $\widehat f_b(\overline{x}) = f_b(x_{-1})$ for $\overline{x}\in \Sigma$. By Corollary \ref{C:invariance}, we can deduce an invariance principle for $Y_n:= \sum_{i=1}^n \widehat f_b((\eta_{i+j})_{j\leq -1})$ once we show that $\bar f_b := \Pi_h \widehat f_b \in C_b(\Sigma)$ and $\var_n(\bar f_b)=O(n^{-r+1})$ for suitable choices of $r>1$. Note that
\begin{align*}
|\bar f_b(\overline{x})| &= |(\Pi_h \widehat f_b)(\overline{x})| =
\Big|\int_S \langle b, z(1)\rangle \frac{h(\overline{x}z)}{\lambda h(\overline{x})} e^{\varphi(\overline{x}z)} \, \mu({\rm d}z)\Big|\\
&\leq D |b|_\infty \sum_{i=1}^d \int_S |\langle e_i, z(1)\rangle| \,
\mu({\rm d}z) <\infty,
\end{align*}
where $e_i\in \Z^d$ denotes the unit vector in the $i$-th coordinate direction, and we used the facts that $D:=\sup_{\overline{x}\in \Sigma, z\in S}\frac{h(\overline{x}z)}{\lambda h(\overline{x})} e^{\varphi(\overline{x}z)}<\infty$ and the simple random walk has integrable displacements. The continuity of $\bar f_b$ follows from dominated convergence.

To bound $\var_n(\bar f_b)= \var_n(\Pi_h \widehat f_b)$, the same calculation as in \eqref{eqna:i1i2}
shows that for any $\overline{u}, \overline{v}\in \Sigma$ and $w_1, \ldots, w_n\in S$, we have
\begin{align*}
 &|\bar f_b(\overline{u}w_1\ldots w_n)- \bar f_b(\overline{v}w_1\ldots w_n)| \\
  &\quad \leq \int_S \big | G_1(\overline{u} w_1 \ldots w_nz) -   G_1(\overline{v} w_1 \ldots w_nz)\big | \, |f_b(z)| \,\mu({\rm d}z) \\
  &\quad  \leq  \sup \Big| \frac{G_1(\overline{u} w_1 \ldots w_nz)}{G_1(\overline{v}w_1 \ldots w_nz)} -1 \Big|
  \int_S  |f_b(z)| G_1(\overline{v}w_1 \ldots w_nz) \, \mu({\rm d}z),
\end{align*}
where $\Pi_h(\overline{u}, {\rm d}\overline{v})=1_{\{\overline{v} =\overline{u}z\}} G_1(\overline{u}z) \mu({\rm d}z)$. The integral above equals $\Pi_h |\widehat f_b|(\overline{v}w_1\ldots w_n)$ and is uniformly bounded by the same reasoning as for $\Pi_h \widehat f_b \in C_b(\Sigma)$. The supremum can be bounded by following the calculations in \eqref{eq:GQuotDist} and \eqref{eq:5varphi} to obtain
$$
\sup_{\overline{u}, \overline{v}, w_1, \ldots, w_n, z} \Big| \frac{G_1(\overline{x} w_1 \ldots w_nz)}{G_1(\overline{y}w_1 \ldots w_nz)} -1 \Big| \leq C \var_n(\varphi).
$$
Therefore
\begin{equation} \label{eq:d6}
\var_n(\bar f_b) \leq C \var_n(\varphi) \leq \frac{C}{n^r}
\end{equation}
with $r=\frac{d}{2}-1$ by Proposition \ref{prop:sumVar}. The assumption $d\geq 6$ ensures that the condition $r>3/2$ in Corollary \ref{C:invariance} is satisfied, while the condition $\int_\Sigma \widehat f_b(\overline{x}) \, \nu_h({\rm d} \overline{x})=0$ in Corollary \ref{C:invariance} follows from the symmetry of $\nu_h$ inherited from the simple symmetric random walk. This entails that $Y_n$ satisfies an invariance principle with a deterministic diffusion coefficient.

For $b\in \R^d$, note that $Y_n=\langle b, Z_n\rangle$ where $Z_n:=\sum_{i=1}^n \eta_{i-1}(1)$. The invariance principle for each component process $\langle e_i, Z_n\rangle$ implies that the $\R^d$-valued process $(Z_{st_L}/\sqrt{t_L})_{s\in [0, 1]}$
is tight as random variables in the Skorohod space $D([0,1], \R^d)$.
Furthermore, for each $r<s$, the invariance principle for $\langle b, Z_n\rangle$, $b\in \R^d$, and the Cram\'er-Wold device implies that $(Z_{st_L}-Z_{rt_L})/\sqrt{t_L}$ converges in distribution to Gaussian vector, whose covariance matrix is linear in $s-r$ and is a multiple of the identity matrix by permutation and reflection symmetry. To conclude that $(Z_{st_L}/\sqrt{t_L})_{s\in [0, 1]}$ converges in distribution to a $d$-dimensional Brownian motion, it only remains to show that the limit has independent increments. This follows from the mixing property of $\eta$. More precisely, assuming $r=0$ for simplicity, for $\nu_h$-a.e.\ $\overline{\eta}=(\eta_i)_{i\leq -1}$, the conditional law of $(\eta_i)_{L\leq i < s t_L}$ given  $\overline{\eta}$ can be written as
\begin{align*}
{\mathcal L}((\eta_i)_{L\leq i < s t_L} | \overline{\eta})
&=  \Pi_h^{st_L}(\overline{\eta}, {\rm d}\overline{y})\circ \pi_{-st_L+L, 0}^{-1} \\
&= \int_{\overline{w}\in \Sigma}\Pi_h^L(\overline{\eta}, {\rm d}\overline{w}) \Pi_h^{st_L-L}(\overline{w}, {\rm d}\overline{y})\circ \pi_{-st_L+L, 0}^{-1},
\end{align*}
where $\pi_{a,b}^{-1}: \Sigma \to S^{b-a}$ is the projection map defined before Lemma \ref{L:PnuTV}, and $\Pi_h^L(\overline{\eta}, {\rm d}\overline{w}) \Rightarrow \nu_h({\rm d}\overline{w})$ by Remark \ref{R:Pih}. We can now apply Lemma \ref{L:PnuTV} to replace $\Pi_h^L(\overline{\eta}, {\rm d}\overline{w})$ by
$\nu_h({\rm d}\overline{w})$ and use $\int_{\overline{w}\in \Sigma} \nu_h({\rm d}\overline{w}) \Pi_h^{st_L-L}(\overline{w}, {\rm d}\overline{y}) = \nu_h({\rm d}\overline{y})$ to obtain
$$
\d_{\rm TV} \big( {\mathcal L}((\eta_i)_{L\leq i < st_L} | \overline{\eta}), \, {\mathcal L}((\eta_i)_{L\leq i < s t_L}) \big) \to 0 \qquad \mbox{as} \quad t\to\infty.
$$
Therefore, the law of $(\eta_i)_{L\leq i < s t_L}$ becomes asymptotically independent of the history $\overline{\eta}=(\eta_i)_{i\leq -1}$. Together with the fact that $(\eta_i)_{0\leq i<L}$ constitutes a negligible contribution to $Z_{st_L}/\sqrt{t_L}$ by Corollary \ref{C:approx}, we deduce that the limit of $(Z_{st_L}/\sqrt{t_L})_{s\in [0, 1]}$ has independent increments, which concludes the proof of Lemma \ref{L:inv}.
\end{proof}

We conclude the article with the proof of Proposition \ref{P:posgamma}.

\begin{proof}[Proof of Proposition \ref{P:posgamma}]
It suffices to consider the process $Z_s:= \sum_{i=0}^{\lfloor s\rfloor-1} \eta_i(1)$ defined in
Lemma \ref{L:inv}, where $\eta=(\eta_i)_{i\in\Z}$ is the stationary process with $\overline{\eta}^{(n)} :=(\eta_{n+i})_{i\leq -1}$ having distribution $\nu_h$ for each $n\in\Z$. To make explicit the dependence on $\gamma,$ we write $\nu_h^\gamma$ instead.
Fix any $1\le \ell \le d$, let us define $\widehat f: \Sigma\to \R$ by $\widehat f(\overline{x}):= \langle e_\ell, x_{-1}(1)\rangle$, where $e_\ell\in \Z^d$ is the unit vector in the $\ell$-th coordinate direction. By symmetry, we have $\E_{\nu_h^\gamma}[\widehat f(\overline{\eta}^{(0)})]=0$. Since $\eta_{-1}(1)\in \Z^d$ has a non-trivial distribution, we have $\E_{\nu_h^\gamma}[\widehat f(\overline{\eta}^{(0)})^2]>0$. Furthermore, by the same argument as for Lemma \ref{L:abscon}, we have
$$
\lim_{\gamma \searrow 0}\E_{\nu_h^\gamma} [ \widehat f(\overline{\eta}^{(0)})^2] = \E_0^X[\langle e_\ell, X_1\rangle^2] \in (0,\infty),
$$
where $X=(X_s)_{s\geq 0}$ is the simple symmetric random walk on $\Z^d$ with $X_0=0$.

Let us denote $Z_n^{(\ell)}:= \langle e_\ell, Z_n\rangle = \sum_{i=0}^{n-1} \widehat f(\overline{\eta}^{(i)})$. By Corollary \ref{C:invariance}, the diffusion coefficient equals
\begin{align} \label{eq:varSum}
\sigma_\gamma^2 = \lim_{n\to\infty}\frac{1}{n} \E_{\nu_h^\gamma} \big[ (Z_n^{(\ell)})^2 \big] = \E_{\nu_h^\gamma} \big[ \widehat f(\overline{\eta}^{(0)})^2 \big] +  2 \sum_{k\in \N} \E_{\nu_h^\gamma} \big[ \widehat f(\overline{\eta}^{(0)}) \widehat f(\overline{\eta}^{(k)})\big].
\end{align}
Applying \eqref{eq:sigmabd}, where the constants $C_{\phi, \bar f}$ are uniformly bounded if $\phi$ is replaced by $\widetilde \gamma \phi$ with $\widetilde \gamma \in [0,1]$, we can bound
$$
\sum_{k\geq k_0} \big| \E_{\nu_h^\gamma} \big[ \widehat f(\overline{\eta}^{(0)}) \widehat f(\overline{\eta}^{(k)})\big] \big|
$$
uniformly in $\gamma\in [0,1]$, and the bound tends to $0$ as $k_0\to\infty$. On the other hand, by the same argument as for Lemma \ref{L:abscon}, for each $k\in\N$, we have
$$
\lim_{\gamma \searrow 0} \E_{\nu_h^\gamma} \big[ \widehat f(\overline{\eta}^{(0)}) \widehat f(\overline{\eta}^{(k)})\big]
=\E^X_0[\langle e_\ell, X_1-X_0\rangle \cdot \langle e_\ell, X_{k+1}-X_k \rangle] =0,
$$
since the simple symmetric random walk $X$ has independent and symmetric increments. By first restricting the sum in \eqref{eq:varSum} to $\sum_{1\leq k\leq k_0}$ for $k_0$ arbitrarily large and then sending $\gamma \searrow 0$, we obtain
\begin{align*}
\lim_{\gamma \searrow 0} \sigma_\gamma^2  =  \E_0^X[\langle e_\ell, X_1\rangle^2] \in (0,\infty),
\end{align*}
which implies that $\sigma_\gamma^2>0$ for $\gamma>0$ sufficiently small. This concludes the proof of Proposition \ref{P:posgamma}.
\end{proof}

\bigskip

\noindent{\textbf{Acknowledgement.}}
RS is supported by NUS Tier 1 grant A-8001448-00-00 and NSFC grant 12271475. SA is supported by the Knowledge Exchange grant at ICTS-TIFR. AD acknowledges support from DFG through the scientific network {\em Stochastic Processes on Evolving Networks.}
We thank Eric Endo, Zemer Kosloff, Omri Sarig, and Dalia Terhesiu for interesting discussions on the thermodynamic formalism.
We also thank the referees for reading the paper carefully and providing helpful comments.

%\section{Declarations:}

%{\bf Funding and/or Conflicts of interests/Competing interests.}
%Rongfeng Sun is supported by NUS Tier 1 grant A-8001448-00-00 and NSFC grant 12271475. Siva Athreya is supported by the Knowledge Exchange grant at ICTS-TIFR. Alexander Drewitz acknowledges support from DFG through the scientific network {\em Stochastic Processes on Evolving Networks.}

%The authors have no relevant financial or non-financial interests to disclose.

%No numerical or categorical  Data was used in this article.
\bibliographystyle{alphaurl}

\bibliography{poissonObstacles}

\end{document}